\documentclass[reqno,a4paper,12pt]{amsart}

\numberwithin{equation}{section}
\usepackage[utf8]{inputenc}
\usepackage{amsmath, amsthm, amssymb, amscd, accents, bm, amsfonts}
\usepackage{mathtools}
\usepackage{url}
\usepackage{mathrsfs,dsfont}
\usepackage{datetime}
\usepackage{hyperref}
\usepackage{colonequals}
\usepackage{enumerate}
\usepackage[sort,nocompress]{cite}

\usepackage[pdftex,dvipsnames,usenames]{color}

\mathtoolsset{showonlyrefs}

\newtheorem{definition}{Definition}[section]
\newtheorem{theorem}[definition]{Theorem}
\newtheorem*{theorem*}{Theorem}
\newtheorem{lemma}[definition]{Lemma}
\newtheorem{corollary}[definition]{Corollary}
\newtheorem{proposition}[definition]{Proposition}

\def\N{{\mathbb N}}
\def\Z{{\mathbb Z}}
\def\R{{\mathbb R}}
\def\T{{\mathbb T}}
\def\C{{\mathbb C}}
\def\Q{{\mathbb Q}}

\newcommand{\Rd}{{\R^d}}

\newcommand{\lspan}{{\mathrm{span}}}
\newcommand{\supp}{{\mathrm{supp}}}

\newcommand\1{\mathds{1}}

\newcommand{\ift}{{\mathcal{F}^{-1}}}
\newcommand{\ft}{{\mathcal{F}}}

\DeclarePairedDelimiter\floor{\lfloor}{\rfloor}
\allowdisplaybreaks

\newcommand{\norm}[1]{\left\lVert #1 \right\rVert}

\makeatletter
\renewcommand{\@secnumfont}{\bfseries}
\makeatother

\RequirePackage{listings}
\RequirePackage{needspace}
\RequirePackage{color}
\providecommand{\lean}[1]{\texttt{\detokenize{#1}}}

\definecolor{LeanKeyword}{rgb}{0.12,0.29,0.52}
\definecolor{LeanOperator}{rgb}{0.14,0.34,0.63}
\definecolor{LeanPunctuation}{rgb}{0.76,0.30,0.06}

\lstdefinestyle{leanpaper}{
  basicstyle=\ttfamily\fontsize{9pt}{11pt}\selectfont,
  keywordstyle=\color{LeanKeyword}\bfseries,
  morekeywords={def,theorem,variable,fun,let,by,if,then,else},
  columns=fullflexible,
  keepspaces=true,
  showstringspaces=false,
  breaklines=true,
  breakatwhitespace=false,
  aboveskip=.6\baselineskip,
  belowskip=.6\baselineskip,
  literate=
    {:=}{{{\color{LeanPunctuation}:=}}}2
    {=>}{{{\color{LeanPunctuation}=>}}}2
    {(}{{{\color{LeanPunctuation}(}}}1
    {)}{{{\color{LeanPunctuation})}}}1
    {:}{{{\color{LeanPunctuation}:}}}1
    {,}{{{\color{LeanPunctuation},}}}1
    {+}{{{\color{LeanOperator}+}}}1
    {-}{{{\color{LeanOperator}-}}}1
    {*}{{{\color{LeanOperator}*}}}1
    {/}{{{\color{LeanOperator}/}}}1
    {0}{{{\color{LeanOperator}0}}}1
    {1}{{{\color{LeanOperator}1}}}1
    {2}{{{\color{LeanOperator}2}}}1
    {3}{{{\color{LeanOperator}3}}}1
    {4}{{{\color{LeanOperator}4}}}1
    {5}{{{\color{LeanOperator}5}}}1
    {6}{{{\color{LeanOperator}6}}}1
    {7}{{{\color{LeanOperator}7}}}1
    {8}{{{\color{LeanOperator}8}}}1
    {9}{{{\color{LeanOperator}9}}}1
    {¬}{{{\color{LeanOperator}$\neg$}}}1
    {Λ}{{$\Lambda$}}1
    {α}{{$\alpha$}}1
    {β}{{$\beta$}}1
    {δ}{{$\delta$}}1
    {ε}{{$\varepsilon$}}1
    {η}{{$\eta$}}1
    {μ}{{$\mu$}}1
    {ξ}{{$\xi$}}1
    {π}{{$\pi$}}1
    {ρ}{{$\rho$}}1
    {ᵈ}{{\textsuperscript{d}}}1
    {ᵐ}{{\textsuperscript{m}}}1
    {‖}{{$\Vert$}}1
    {⁻}{{\textsuperscript{-}}}1
    {₀}{{\textsubscript{0}}}1
    {₂}{{\textsubscript{2}}}1
    {ℂ}{{$\mathbb C$}}1
    {ℕ}{{$\mathbb N$}}1
    {ℚ}{{$\mathbb Q$}}1
    {ℝ}{{$\mathbb R$}}1
    {ℤ}{{$\mathbb Z$}}1
    {↑}{{$\uparrow$}}1
    {→}{{{\color{LeanOperator}$\to$}}}1
    {∀}{{{\color{LeanOperator}$\forall$}}}1
    {∂}{{$\partial$}}1
    {∃}{{{\color{LeanOperator}$\exists$}}}1
    {∈}{{{\color{LeanOperator}$\in$}}}1
    {∉}{{$\notin$}}1
    {∑}{{{\color{LeanOperator}$\sum$}}}1
    {√}{{$\surd$}}1
    {∞}{{$\infty$}}1
    {∧}{{{\color{LeanOperator}$\land$}}}1
    {∩}{{$\cap$}}1
    {∫}{{$\int$}}1
    {≠}{{$\ne$}}1
    {≤}{{{\color{LeanOperator}$\le$}}}1
    {≥}{{{\color{LeanOperator}$\ge$}}}1
    {⊆}{{{\color{LeanOperator}$\subseteq$}}}1
    {⟨}{{$\langle$}}1
    {⟩}{{$\rangle$}}1
    {��}{{$\mathcal F$}}1
}

\lstnewenvironment{leancode}{\lstset{style=leanpaper}}{}

\begin{document}

\title[Universal completeness of exponentials]{Universal completeness of exponentials}

\author[S.~Bertolini]{Susanna Bertolini}
\address{ Department of Mathematics, ETH Z\"urich, Ramistrasse 101, 8092 Z\"urich, Switzerland}
\email{susanna.bertolini@math.ethz.ch}

\author[E.~Florit-Simon]{Enric Florit-Simon}
\address{ Department of Mathematics, ETH Z\"urich, Ramistrasse 101, 8092 Z\"urich, Switzerland}
\email{enric.florit@math.ethz.ch}

\author[L.~Liehr]{Lukas Liehr}
\address{Department of Mathematics, Bar-Ilan University, Ramat-Gan 5290002, Israel}
\email{lukas.liehr@biu.ac.il}

\author[M.~Taylor]{Mitchell A. Taylor}
\address{Mathematical Institute, University of Oxford, Andrew Wiles Building, Radcliffe Observatory Quarter, Woodstock Road, Oxford, OX2 6GG, United Kingdom}
\email{mitchtaylor@shaw.ca}

\date{\today}
\subjclass[2020]{Primary 42A65; Secondary 42C30, 30D20, 46E35}
\keywords{Completeness of exponentials, universal uniqueness}

\begin{abstract}
We consider generalizations of the classical Fourier uniqueness theorem. First, we construct a family of uniformly discrete sets $\Lambda \subset \mathbb{R}$, of uniform density one, such that the exponential system $\{e^{2\pi i\lambda x} : \lambda \in \Lambda\}$ is complete in $L^p(S)$ for every $1 \leq p < \infty$ and every measurable set $S \subset \mathbb{R}$ with $|S| < 1$. We also show that no set that is asymptotically integer can have this universality property. Additionally, for every $v \in (0,1)$, we construct a set of integer frequencies and uniform density $v$ whose exponential system is complete in $L^p(S)$ for every $1 \leq p < \infty$ and every measurable set $S \subset [0,1]$ with $|S| < v$.  Finally, we prove that the Sobolev regularity condition $\alpha > \frac12$ for the existence of uniformly discrete uniqueness sets for spectra with periodic weak gaps, considered by Olevskii and Ulanovskii, is sharp. Our findings admit extensions to higher dimensions and are verified in Lean.
\end{abstract}

\maketitle

\section{Introduction and results}\label{sec:intro}

The classical Fourier uniqueness theorem completely characterizes a function $f \in L^2(0,1)$ in terms of its Fourier coefficients $\hat f(n)=\int_0^1 f(t) e^{-2\pi i n t} \, dt$. Equivalently, the exponential system $\{ e^{2\pi i n x} : n \in \Z \}$ is complete in the space $L^2(0,1)$, i.e., the linear span of $\{ e^{2\pi i n x} : n \in \Z \}$ is dense in $L^2(0,1)$.

It is natural then to consider the completeness properties of more general exponential systems
$$
E(\Lambda) = \{ e^{2\pi i \lambda x} : \lambda \in \Lambda \},
$$
where $\Lambda\subset \R$ is not necessarily an arithmetic progression anymore. One often considers $\Lambda$ which are \textit{uniformly discrete}, namely with the property that $\inf \{ |\lambda - \lambda'| : \lambda, \lambda' \in \Lambda, \, \lambda \neq \lambda' \} > 0$. Additionally, one says that $\Lambda$ is \textit{uniformly distributed}, with \textit{uniform density} $0 \leq D(\Lambda) < \infty$, if
$$
\#\big ( \Lambda \cap [x,x+r] \big) = D(\Lambda)r + o(r), \quad r \to \infty,
$$
where $\#(\Omega)$ denotes the cardinality of a set $\Omega$ and the convergence is uniform with respect to $x \in \R$. An extension of the Fourier uniqueness theorem, which is based on complex-variable techniques, asserts that if $\Lambda$ is uniformly distributed and has uniform density $D(\Lambda) > 1$, then $E(\Lambda)$ is still complete in $L^2(0,1)$. Conversely, if $\Lambda$ is uniformly distributed and $E(\Lambda)$ is complete in $L^2(0,1)$, then necessarily $D(\Lambda) \geq 1$; see, for instance, \cite[Lectures 16-18]{Levin1996}, \cite{Young2001}, and \cite{Redheffer1977} for a survey. We also note a deep theorem of Beurling and Malliavin \cite{BeurlingMalliavin1967} which extends this to arbitrary sets $\Lambda$: it provides a large class of $\Lambda$ such that $E(\Lambda)$ is complete in $L^2(0,1)$, namely those for which their Beurling-Malliavin density exceeds the critical value $1$. Below the critical density, the system is incomplete. For an exposition on the significance of the Beurling-Malliavin theorem we refer to \cite{mashreghi2006beurling}.

\subsection{Completeness on families of sets}
The results above consider only the fixed set $S=[0,1]$, and aim at finding all $\Lambda$ such that $E(\Lambda)$ is complete in $L^2(S)$ (for corresponding characterizations of Riesz bases and frames for $L^2(0,1)$, see \cite{pavlov1979basicity,lyubarskii1997complete} and \cite{ortega2002fourier}). The previous question can be reversed by fixing $\Lambda$ and asking for a large class of measurable sets $S \subseteq \R$ of finite measure such that $E(\Lambda)$ is complete in $L^2(S)$.  In this setting, $S$ is usually referred to as the spectrum. 

If $E(\Lambda)$ is the classical exponential system, i.e., $\Lambda = \Z$, the sets $S$ for which $E(\Lambda)$ is complete in $L^2(S)$ are characterized: they are exactly those satisfying the geometric packing condition
$$
\sum_{k \in \Z} \1_S(x+k) \leq 1 \quad \text{for almost every } x \in \R,
$$
that is, the integer translates of $S$ are pairwise disjoint up to sets of measure zero. This follows from a periodization argument, see \cite[Section 8]{OlevskiiUlanovskii2020} and \cite{de2019three}. In particular, the completeness of $E(\Z)$ in $L^2(S)$ forces $|S| \leq 1$.

If one considers more general sets $\Lambda$, such as perturbations of the integers, then an exponential system can be complete in $L^2(S)$ for sets whose measure can be arbitrarily large instead. This is known as Landau's phenomenon. Precisely, Landau \cite{Landau1964} constructed a uniformly discrete set $\Lambda$ of uniform density one, such that $E(\Lambda)$ is complete in $L^2(S)$ for every set $S$ belonging to the class
$$
\Big \{ \bigcup_{k \in F} \, [k+a, k+1-a] \ :  F \subseteq \Z \text{ finite}, \ 0 < a < \tfrac12 \Big \}.
$$
The property that the intervals are at positive distance to the integers is crucial for this result: it guarantees that the so-called projection
$$
\mathrm{Proj}(S) := (S+\Z) \cap [0,1]
$$
omits a set of positive measure in $[0,1]$, while the number of integer translates of $S$ covering a given point in $[0,1]$ may be arbitrarily large. Moreover, the phenomenon is not tied to a particular construction: for every perturbation $\Lambda = \{ n + \delta_n : n \in \Z\}$ of the integers with $\delta_n \neq 0$ and $\delta_n \to 0$ as $|n| \to \infty$, the system $E(\Lambda)$ is complete in $L^2(S)$ for a certain class of sets of arbitrarily large measure \cite{Olevskii1997,bruna2006completeness}. A multi-dimensional version of Landau's phenomenon was obtained in \cite{Ulanovskii2001}. For a systematic treatment of completeness and sampling problems for disconnected spectra, we refer to the lecture notes \cite{OlevskiiUlanovskii2016} and the survey \cite{OlevskiiUlanovskii2020}. For a use of Landau's phenomenon to obtain new results on completeness problems of discrete translates we refer to a recent paper by Lev \cite{lev2025completeness}.

\subsection{Universal completeness on finite measure sets}
Landau's result requires $S$ to have a special structure. On the other hand, Olevskii and Ulanovskii \cite{OlevskiiUlanovskii2008} constructed a set $\Lambda$ of uniform density one such that $E(\Lambda)$ is complete in $L^2(S)$ for every bounded set $S$ with measure $|S| < 1$. One may take $\Lambda = \{ n + 2^{-|n|} : n \in \Z \}$; in fact, every perturbation of $\Z$ by a nonvanishing exponentially decaying sequence works \cite[Theorem~3.1]{OlevskiiUlanovskii2008}.

If the boundedness assumption is dropped (thus only $|S| < 1$ is assumed), Olevskii and Ulanovskii \cite{OlevskiiUlanovskii2011} constructed for each individual set $S$ a set $\Lambda_S$ (depending on $S$) of uniform density $D(\Lambda_S)=|S|$ such that $E(\Lambda_S)$ is complete in $L^2(S)$.\\
In this line of research, several natural problems have remained open:
\begin{itemize}
    \item The first is whether the two results above can be combined: does there exist a uniformly discrete set $\Lambda$ with uniform density $1$ such that $E(\Lambda)$ is complete in $L^2(S)$ for every $S$ with $|S|<1$? See \cite[Section 7.2]{OlevskiiUlanovskii2017} and \cite[Section 12]{OlevskiiUlanovskii2020}, where this problem is stated explicitly.
    \item A second problem, posed in \cite[Section~7.2, Question~3]{OlevskiiUlanovskii2017}, concerns uniqueness in an $L^1$-sense: given $S\subset\R$ with $|S| < 1$, does there exist a uniformly discrete set $\Lambda_S$ such that for every $f \in L^1(S)$ with
$$
\int_S f(x) e^{-2\pi i \lambda x} \, dx = 0, \quad \lambda \in \Lambda_S,
$$
one has $f=0$?
\end{itemize}
Our first main result gives an answer to both questions.
Hereinafter, $\{t\} = t - \floor{t}$ denotes the fractional part of $t \in \R$.

\begin{theorem}\label{thm:universal}
    Let $\alpha \in \R \setminus \Q$, and let $\beta\in \mathbb Q$ with $0 < \lvert \beta\rvert <\frac12$. Define the set
    $$
    \Lambda = \Lambda_{\alpha,\beta}=\{ n+ \delta_n : n \in \Z \}, \quad \mbox{with}\quad  \delta_n= \beta \{n\alpha\},
    $$
    which is uniformly discrete and has uniform density $D(\Lambda) = 1$.
    
 Then, for every $S \subset \R$ with $|S| < 1$ and every $f \in L^1(S)$, the condition
    $$
    \int_S f(x) e^{-2\pi i x \lambda} \, dx = 0, \quad \lambda \in \Lambda,
    $$
    implies that $f=0$. In particular, the exponential system $E(\Lambda)$ is complete in $L^p(S)$ for every $p \in [1,\infty)$ and every $S \subseteq \R$ with $|S| < 1$.
\end{theorem}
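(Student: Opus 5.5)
The plan is to exploit the cut-and-project (``deformed model set'') structure of $\Lambda_{\alpha,\beta}$. Write $\lambda_n = n+\beta\{n\alpha\} = n(1+\beta\alpha) - \beta\lfloor n\alpha\rfloor$. With $\beta = p/q$ in lowest terms, the factor $e^{-2\pi i \beta \lfloor n\alpha\rfloor x}$ depends only on the residue $j\equiv \lfloor n\alpha\rfloor \pmod q$. That residue is in turn determined by which of the $q$ intervals $I_j=[j/q,(j+1)/q)$ contains the point $\{n\alpha/q\}\in\T$. Hence the hypothesis on $f$ splits into $q$ families of equations
$$
\int_S f_j(x)\, e^{-2\pi i n(1+\beta\alpha)x}\,dx = 0,\qquad f_j(x):=f(x)e^{2\pi i\beta jx},\quad n\in N_j:=\{n:\{n\alpha/q\}\in I_j\}.
$$
Each $N_j$ is a Bohr set with density $1/q$. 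The idea is to recombine these $q$ families into a single statement about the lattice $\{(n,n\alpha/q)\}\subset\R\times\T$, which is dense in the second coordinate because $\alpha\notin\Q$.

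\textbf{Step 1 (lifting to a torus).} Consider the distribution on $\R\times\T$ given by $\Phi(x,\theta)=\sum_{j} \1_{I_j}(\theta)\, f_j(x)$. Its Fourier transform in $x$ is evaluated along $\xi = n(1+\beta\alpha)$, and its dependence on $\theta$ is tested against the characters of $\T$ along $\theta=n\alpha/q$. By density of $\{n\alpha/q\}$ (Weyl equidistribution), the vanishing along the lattice points should imply that a suitable ``Fourier–Bohr'' transform $\widehat\Phi(\xi,m)$ vanishes on a set of frequencies $\{(n(1+\beta\alpha), n)\}$. Projecting back to $\R$ gives a condition of the form $\sum_m c_m\,\hat f(\xi + m\cdot\text{const})=0$ on an arithmetic progression in $\xi$. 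Here the coefficients $c_m$ come from the Fourier coefficients of $\1_{I_j}$, i.e.\ geometric sums, and these are explicit precisely because $\beta$ is rational. The hypothesis $|\beta|<\tfrac12$ should guarantee that the relevant symbol $\sum_m c_m e^{2\pi i m\,(\cdot)}$ is nonvanishing, which is also what gives uniform discreteness.

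\textbf{Step 2 (periodization).} An equation on an arithmetic progression $\xi\in(1+\beta\alpha)^{-1}\cdots$ turns, via Poisson/periodization, into a statement about the periodization $P(x)=\sum_k g(x+k/(1+\beta\alpha))$ of an $L^1$ function $g$ built from $f$ and the modulations $e^{2\pi i\beta j x}$. This periodization is well defined in $L^1$ of a circle even when $S$ is unbounded, which is exactly why $f\in L^1$ suffices and no boundedness of $S$ is needed. Vanishing of all its Fourier coefficients forces $P=0$ a.e. Then, for a.e.\ $x$, the fibre vector $(f(x+k))_k$ lies in the kernel of a family of linear functionals indexed by the $q$ residues and by the continuum of twists $\{n\alpha\}$.

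\textbf{Step 3 (measure count).} The condition $|S|<1$ means that for $x$ in a set of positive measure, or in a suitable averaged sense, the fibre $\{k: x+k\in S\}$ is small. I would use the equidistribution of $\{n\alpha\}$ once more to show that the twist parameters $s=\{n\alpha\}$ which occur at a given fibre fill an interval. A vector $(f(x+k))_k\in\ell^1$ annihilated by $s\mapsto\sum_k f(x+k)e^{-2\pi i\beta s k}$ for all $s$ in an interval must vanish, since this is a nonzero almost-periodic/analytic function in $s$ otherwise. Combined with $P=0$ this yields $f=0$. Completeness in $L^p(S)$ then follows by Hahn–Banach, since $L^{p'}(S)\subset L^1(S)$ when $|S|<\infty$.

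\textbf{Main obstacle.} The main obstacle is Step 1/Step 3: converting vanishing of $\hat f$ only at the discrete points $\lambda_n$, one twist $s=\{n\alpha\}$ per $n$, into vanishing for a whole interval of twists on each fibre. Here $\hat f$ is merely uniformly continuous, with no analyticity available since $S$ is unbounded. I would first try to use uniform continuity of $\hat f$ together with the almost periodicity in $n$ of the map $n\mapsto\{n\alpha\}$. The aim is to show that the $L^1(\T)$-valued map $s\mapsto G_s$, with $G_s(x)=\sum_k f(x+k)e^{-2\pi i\beta s(x+k)}$, has all its Fourier coefficients $\hat G_s(n)$ vanishing along a recurrent set of $n$ for every $s$. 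This is where the bound $|S|<1$ and the rationality of $\beta$ must genuinely enter.
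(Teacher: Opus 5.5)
There is a genuine gap, and it starts with your first reduction. With $\beta=p/q$ and $\lfloor n\alpha\rfloor=j+qm$ as you write, one has $e^{2\pi i\beta\lfloor n\alpha\rfloor x}=e^{2\pi i\beta jx}\,e^{2\pi i pmx}$. The second factor is not $1$ unless $px\in\Z$, so the factor does \emph{not} depend only on the residue $j$. Consequently the equations $\int_S f_j(x)e^{-2\pi i n(1+\beta\alpha)x}\,dx=0$, $n\in N_j$, are not equivalent to the hypothesis. They say $\widehat f$ vanishes at $n(1+\beta\alpha)-\beta j$, and these points differ from $\lambda_n$ by $pm\approx\beta\alpha n$, which drifts linearly. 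The torus lift, the progression in $\xi$, and the periodization with period $1/(1+\beta\alpha)$ in Steps 1--2 all rest on this.

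Beyond that, Steps 1 and 3 are heuristics (``should imply'', ``should guarantee''). The step you call the main obstacle is passing from vanishing of $\widehat f(n+\beta s)$ at a single twist $s=\{n\alpha\}$ per $n$ to information on whole fibres, and that is the entire content of the theorem. Uniform continuity of $\widehat f$ only makes $\widehat f(n+\beta s)$ small for the $n$ with $\{n\alpha\}$ near $s$, never zero. More fundamentally, nothing in your plan compares a density of forced zeros (which is $1$) with a quantity controlled by $|S|$. Fibres being empty on a positive-measure set, i.e.\ $|\mathrm{Proj}(S)|<1$, is not the relevant quantity, and your final step (an $\ell^1$ vector annihilated for $s$ in an interval) uses no measure condition at all.

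The missing idea is the one the paper uses. First replace $\delta_n$ by $\beta(\{n\alpha\}-\tfrac12)$, a harmless shift. Expand $y\mapsto e^{2\pi i a(y-1/2)}$ on $(0,1)$ in its Fourier series, with coefficients $\sin(\pi a)/(\pi(a-\ell))$, and evaluate at $y=\{n\alpha\}$. Then $e^{2\pi i\ell y}=e^{2\pi i\ell n\alpha}$ combines with $e^{2\pi i nt}$, so the perturbation becomes the rotation $t\mapsto t+\ell\alpha$. All sampling conditions together then say that one function on $[0,1)$, built from the layers $f_j(\{x-\ell\alpha\})$ with $f_j=f(j+\cdot)$, has vanishing Fourier coefficients. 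To get absolute $L^1$ convergence one works with the differences $M_k$.

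For a.e.\ $x$ this produces a meromorphic function $z\mapsto\sum_{j,\ell}r_{\ell,j}\bigl(\frac1{z-p_{\ell,j}}+\frac1{p_{\ell,j}}\bigr)$. It has simple real poles at $p_{\ell,j}=\ell-\beta(\{x-\ell\alpha\}+j)$ exactly where $f_j(\{x-\ell\alpha\})\neq0$, and it vanishes at every integer. The ergodic theorem bounds the pole density by $|\{f\neq0\}|\le|S|<1$, while the zeros have density at least $1$. Jensen's formula, together with the growth bound $|M_{Re^{i\theta}}(x)|\le C_xR\bigl(1+|\sin\theta|^{-1}\bigr)$, then forces the function to vanish identically, so all residues and hence $f$ vanish. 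In that argument, rationality of $\beta$ serves to make the poles pairwise distinct so residues cannot cancel. It does not enter through $\lfloor n\alpha\rfloor$ modulo $q$.
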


We note that Theorem \ref{thm:universal} determines directly an explicit family of sets $\Lambda_{\alpha,\beta}$ such that $E(\Lambda_{\alpha,\beta})$ is complete in $L^p(S)$ for every $p \in [1,\infty)$ and every $S \subseteq \R$ with $|S| < 1$. The sets $\Lambda_{\alpha,\beta}$ arise from the cut-and-project scheme that produces Meyer's simple quasicrystals. Sets of this type are known to have the following sampling set property: Matei and Meyer \cite{MateiMeyer2008,MateiMeyer2010} proved that every simple quasicrystal of density $d$ is a set of stable sampling for band-limited functions whose spectrum is a \emph{compact} set of measure less than $d$. The set $\{ n + \{n\alpha\} : n \in \Z\}$ appears there, in connection with a question of Golse \cite{MateiMeyer2010}. We refer to \cite{GrepstadLev2014,GrepstadLev2018} for the connection between universal sampling at the critical density and bounded remainder sets. A characterization of exponential frames at the critical density was recently established in \cite{enstad2025exponential}.

We also note that the existence problem for frames on unbounded spectra was settled by Nitzan, Olevskii, and Ulanovskii \cite{NitzanOlevskiiUlanovskii2016}, who proved that every measurable set $S\subseteq\R$ with $0<|S|<\infty$ admits a frame of exponentials. More recently, Bownik and van Velthoven \cite{BownikVanVelthoven2025Redundancy} showed that, for every such $S$ and every $\varepsilon>0$, the frequency set to obtain an exponential frame can be chosen with upper Beurling density at most $(1+\varepsilon)|S|$. In both results, the frequency set depends on $S$.

\subsection{Non-completeness of asymptotically integer frequencies}
The set $\Lambda_{\alpha,\beta}$ in Theorem \ref{thm:universal} is a perturbation of the integers $\Z$ by a sequence $\delta_n=\beta \{n\alpha\}$ satisfying $\delta_n\not \to 0$. This a key structural difference compared to the sequences considered by Olevskii and Ulanovskii in \cite{OlevskiiUlanovskii2008} where $\delta_n \to 0$. This is not accidental: already in \cite[Example 3.2]{OlevskiiUlanovskii2008} it was shown that there exist unbounded sets $S$ of arbitrarily small measure such that completeness of $E(\Lambda)$ in $L^2(S)$ fails for the perturbed integers $\Lambda = \{\pm(n+2^{-k(n)})\}$.

Our next theorem extends this to arbitrary sequences that are asymptotically integer: a perturbation with $\delta_n\to 0$ can never yield a universal completeness set of exponentials, even on sets of arbitrarily small measure.

\begin{theorem}\label{thm:null}
    Let $\Lambda = \{ n+ \delta_n : n \in \Z \}$, with $\delta_n\to 0$ as $|n|\to\infty$. For every $\varepsilon>0$, there exists a measurable $S \subseteq \R$ such that $|S| < \varepsilon$ and $E(\Lambda)$ is not complete in $L^2(S)$.
\end{theorem}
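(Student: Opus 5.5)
The plan is to build, for a given $\varepsilon>0$, an unbounded set $S=\bigcup_{k\in K}(k+I_k)$ together with a nonzero $f\in L^2(S)$ whose Fourier transform vanishes on all of $\Lambda$. Here $K\subset\Z$ is infinite and sparse, and $I_k\subset[0,1)$ are intervals with $\sum_k |I_k|<\varepsilon$.

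\textbf{Step 1: choose the ansatz.} I would look for $f$ of the form $f(x)=\sum_{k\in K} b_k\,\varphi_k(x-k)$, where each $\varphi_k$ is supported in $I_k$. Then
$$
\hat f(\xi)=\sum_{k\in K} b_k\,\hat\varphi_k(\xi)\,e^{-2\pi i k\xi}.
$$
Evaluating at $\lambda_n=n+\delta_n$ gives $e^{-2\pi i k\lambda_n}=e^{-2\pi i k\delta_n}$. So the conditions $\hat f(\lambda_n)=0$ form a linear system in $(b_k)$ whose ``periodic part'' sees only the small numbers $\delta_n$.

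\textbf{Step 2: an exact model problem.} Ignoring the factors $\hat\varphi_k$, i.e.\ pretending they are identically $1$, the model condition reads $B(\delta_n)=0$ for all $n$, where $B(t)=\sum_k b_k e^{-2\pi i k t}$. This has nonzero solutions as soon as $B\in L^2(\T)$ vanishes on a neighbourhood $(-\eta,\eta)$ of $0$. Indeed, $\delta_n\to 0$ puts $\delta_n$ in that neighbourhood for all $|n|>M$. The finitely many remaining $n$ are handled by a finite number of extra linear constraints, which cuts out a subspace of finite codimension in the infinite-dimensional space of such $B$. This is exactly where the hypothesis $\delta_n\to0$ enters: all but finitely many frequencies collapse modulo $1$ into an arbitrarily small arc, which can be avoided.

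\textbf{Step 3: making the measure finite.} In the model, the support has infinite measure if the $I_k$ have a common length. So I would let $|I_k|=h_k$ with $\sum h_k<\varepsilon$, and set $\varphi_k(x)=h_k^{-1}\varphi(x/h_k)$ for a fixed bump $\varphi$ with $\hat\varphi(0)=1$. Then $\hat\varphi_k(\xi)=\hat\varphi(h_k\xi)$, which is close to $1$ on the range $|\xi|\ll h_k^{-1}$. The true system is therefore a perturbation of the model system: the matrix entries are $e^{-2\pi i k\delta_n}\hat\varphi(h_k\lambda_n)$ instead of $e^{-2\pi i k\delta_n}$. Two further choices are needed here. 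First, choose $K$ sparse and $h_k$ small, so that $f\in L^2$ requires $\sum |b_k|^2/h_k<\infty$; this forces the $b_k$ to decay fast, and hence $B$ to be smooth. Second, use the rapid decay of $\hat\varphi$ to control the large $|n|$.

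\textbf{Step 4: correct the perturbation.} I would then show that the model solution can be corrected to an exact solution. One option is a Neumann-series or fixed-point argument for the operator mapping $(b_k)$ to $(\hat f(\lambda_n))_n$. The alternative is a duality argument: it suffices to show that some function in $L^2(S)$ lies at positive distance from the closed span of $E(\Lambda)$ in $L^2(S)$, and the model solution provides a candidate annihilator up to a small error.

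I expect Step 4 to be the main obstacle: making the perturbation exact while keeping $f\neq0$ and $\sum h_k<\varepsilon$. The tension is that small $h_k$ is needed for the measure bound but makes $\hat\varphi_k$ deviate from $1$ only at frequencies of size $h_k^{-1}$. There infinitely many constraints $\hat f(\lambda_n)=0$ still have to be met exactly. I would try first to choose $K=\{k_j\}$ lacunary and $h_{k_j}$ decreasing so fast that each new block of frequencies $|n|\sim h_{k_j}^{-1}$ is handled inductively by the finitely many coefficients already fixed plus one new free coefficient. The error terms would then be summable, and the limit would be a nonzero $f$ with $\hat f|_\Lambda=0$.
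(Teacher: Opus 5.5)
There is a genuine gap. Step~4 is the entire construction, and none of the routes you sketch for it can work.

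\textbf{The proposed mechanisms for Step~4 fail.} The true system is not a small perturbation of the model. For each fixed $k$, $\hat\varphi(h_k\lambda_n)\to0$ as $|n|\to\infty$ (Riemann--Lebesgue). So for all large $|n|$, every column of the true matrix differs from the model column by entries of modulus close to $1$, and no Neumann series is available.

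Nor can you absorb the errors with the model operator $b\mapsto(B(\delta_n))_n$, because it sees only $\delta_n$. Its range consists of sequences that coincide at all $n$ sharing a value of $\delta_n$ (there can be arbitrarily many such $n$), vary continuously with $\delta_n$, and converge to $B(0)$. Generic error sequences are not of this form. The real problem requires prescribing independent values at distinct $\lambda_n$ with nearly equal $\delta_n$, using functions of small support and with $L^2$ control. Your ansatz has one scalar per translate, with the $n$-dependence frozen into $\hat\varphi(h_k\,\cdot)$, and you have no estimate that achieves this.

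The inductive fallback also fails, for two reasons.
\begin{enumerate}
\item With $K$ lacunary (indeed with any $K$ of zero uniform density), a nonzero $B=\sum_{k\in K}b_ke^{-2\pi ikt}\in L^2(\T)$ cannot vanish on an arc. This follows from Zygmund's theorem, or from the completeness of $E(\Z\setminus(-K))$ on arcs of length $<1$. So Step~2 collapses. Sparseness was not even needed, since $\sum_{k\in\Z}h_k<\varepsilon$ is easy to arrange.
\item One new scalar per block cannot satisfy the $\gtrsim h_{k_j}^{-1}$ exact conditions in that block. Moreover, $\hat\varphi(h_{k_j}\lambda_n)\approx1$ for $|n|\ll h_{k_j}^{-1}$, so the new bump destroys the zeros already obtained.
\end{enumerate}

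\textbf{What the paper supplies instead.} It first reduces to $0<|\delta_n|<1/8$, by multiplying the transform by $1-e^{2\pi ix}$ and by a polynomial in $e^{2\pi ix}$. It then proves a quantitative interpolation lemma using Lyapunov's convexity theorem and Kadec's $1/4$ theorem: for finite $I\subseteq\Z$ and data $(v_k)$ there is $f$ with $\supp(f)\subset[0,1]$, $|\supp(f)|\le\varepsilon$, $\hat f(\lambda_k)=v_k$, and $\norm{f}_2^2\le C\varepsilon^{-1}\sum|v_k|^2$.

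Next it splits $\Z$ into dyadic blocks $B_j=\{n:2^{-j-1}\le|\delta_n|<2^{-j}\}$ and corrects one block at a time with $g_j=h(\cdot+2^{j-1})-h$, so that $\hat g_j=(e^{2\pi i2^{j-1}x}-1)\hat h$. The multiplier is bounded below on $B_j$ and is $O(2^j|\delta_n|)$ at every $\lambda_n$. The interpolant $h$ is chosen to vanish on all earlier blocks and on $B_{j+1},\dots,B_{4j}$. Combined with the Bessel bound, this makes the corrections square summable, each with support measure at most $\varepsilon_j$, where $\sum\varepsilon_j<\varepsilon/2$. The limit agrees with a fixed nonzero smooth function on that function's support, so it is nonzero.

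Your observation that integer translates turn $e^{-2\pi ik\lambda_n}$ into $e^{-2\pi ik\delta_n}$ is the right heuristic; it is exactly the role of the two translates above. But it has to be used through a finite multiplier $e^{2\pi imx}-1$ together with a flexible, norm-controlled interpolant. An infinite $B$ vanishing near $0$ combined with a fixed bump does not do this.
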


\subsection{Universal completeness of integer frequencies}

We next consider the situation where $S$ is a measurable subset of the unit interval $[0,1]$. After rescaling, Theorem \ref{thm:universal} implies that for every $v \in (0,1]$ there exists a uniformly discrete set $\Lambda_v$ with uniform density $D(\Lambda_v)=v$ such that $E(\Lambda_v)$ is complete in $L^2(S)$ for every $S \subseteq [0,1]$ with $|S|<v$. The frequencies obtained in this way are, however, never integers. Olevskii and Ulanovskii studied in \cite{OlevskiiUlanovskii2008} the question whether, in this setting, a universal completeness set of exponentials $E(\Lambda_v)$ for all sets $S$ with $|S|<v$ can be chosen in a uniformly distributed way with the additional arithmetic property that $\Lambda_v$ is a subset of $\Z$. Questions of this nature go back to the work of Bourgain and Tzafriri \cite{BourgainTzafriri1987}.

It is shown in \cite{OlevskiiUlanovskii2008} that for every value $v$ belonging to the set
$$
\mathcal{V} = \left \{1-\frac1m : m \in \N_{>1}  \right \}
$$
there is a set $\Lambda_v \subseteq \Z$ of uniform density $v$ such that $E(\Lambda_v)$ is complete in $L^2(S)$ for every $S \subseteq [0,1]$ with $|S| < v$ (for $v=\frac12$ one may even allow $|S| = \frac12$), see \cite[Theorems 2.2 and 2.4]{OlevskiiUlanovskii2008}. In addition, it is shown that randomly chosen subsets of $\Z$ of the same density almost surely fail to have this universality property \cite[Theorem 2.5]{OlevskiiUlanovskii2008}, so the arithmetic structure of $\Lambda_v$ is essential. 

Observe that the values of $\mathcal{V}$ all lie in $[\frac12,1)$, the right half of the unit interval. Indeed, the technique of \cite{OlevskiiUlanovskii2008}, which rests on a Hardy space argument, is designed for densities $v \geq \frac12$ and, to the best of the authors' knowledge, does not extend to values below $\frac12$.

For all other values of $v$, the existence of a universal completeness set of exponentials with integer frequencies remained an open problem. In particular, it is conjectured in \cite[p. 59]{OlevskiiUlanovskii2016} that one cannot obtain such a result for values $v < \frac12$, and \cite[Open Question 6.5]{OlevskiiUlanovskii2016} and \cite[Section 12]{OlevskiiUlanovskii2020} pose the analogous problem with $v=\frac{1}{3}$.

Our next result settles the problem of universal completeness of integer frequencies for all subsets of the unit interval with $|S|<v$ as well as in the general $L^p$-sense, answering in particular the questions above.
\begin{theorem}\label{thm:integer}
    Let $\alpha \in \R \setminus \Q$, let $v \in [0,1]$, and let $\Lambda_v \subseteq \Z$ be defined by
    $$
    \Lambda_v=\{n\in\Z:\{n\alpha\}\in[1-v,1)\},
    $$
    which has uniform density $D(\Lambda)=v$. Then for every $S \subseteq [0,1]$ with $|S|<v$ and every $f \in L^1(S)$, the condition
    $$
    \int_S f(x) e^{-2\pi i x \lambda} \, dx = 0, \quad \lambda \in \Lambda_v,
    $$
    implies that $f=0$. In particular, the exponential system $E(\Lambda_v)$ is complete in $L^p(S)$ for every $p \in [1,\infty)$ and every $S \subseteq [0,1]$ with $|S| < v$.
\end{theorem}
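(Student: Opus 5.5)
The plan is to turn the arithmetic condition on the frequencies into a geometric support condition on a two-dimensional torus, where the irrationality of $\alpha$ can be used through ergodicity of the Kronecker flow. Write $I=[1-v,1)$ and $\mathbb T=\R/\Z$. Extend $f$ by zero to $[0,1]$ and regard it as an element of $L^1(\mathbb T)$. The hypothesis then says $\hat f(n)=0$ whenever $\{n\alpha\}\in I$, since $\Lambda_v\subseteq\Z$ means only Fourier coefficients of $f$ on the circle are involved. The set $\Lambda_v$ is the cut-and-project set of $\Z^2$ with window $I$: $n\in\Lambda_v$ iff there is $m\in\Z$ with $n\alpha-m\in I$. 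So I would introduce the lattice
$$
\Gamma=\{(n,\,n\alpha-m):n,m\in\Z\}\subset\R^2
$$
and its dual lattice $\Gamma^*$, and work on $\R^2/\Gamma^*$. I have not fixed the precise auxiliary function on $\R^2$ whose $\Gamma^*$-periodization does the job; the requirement is that it be built from $f$ and the window so that the Fourier coefficients of the periodization at $(n,n\alpha-m)\in\Gamma$ are essentially $\hat f(n)$ weighted by the position of $n\alpha-m$. The vanishing hypothesis then becomes a spectral-gap condition in the second (internal) variable.

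Concretely, I would aim to construct a function $F\in L^1$ of the torus with the following two properties.
\begin{enumerate}[(i)]
\item Along each line in the physical direction $(1,0)$, $F$ restricts to a translate of $f$ (twisted by a character). Hence $F$ vanishes outside a set $\tilde S$ whose slice in the physical variable has measure at most $|S|<v$.
\item Along the internal direction, the spectrum of $F$ avoids the window $I$. In other words, in the internal variable the spectrum of $F$ lies in an arithmetic window of length $1-v$.
\end{enumerate}
With these in hand, one restricts $F$ to almost every line of an irrational direction. Since $\alpha\notin\Q$ and Lebesgue measure on the torus is ergodic for that line flow, the restrictions are well defined for a.e.\ line and, by (ii), inherit a spectrum confined to a set of density $1-v$. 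Meanwhile, by (i) and Fubini/Birkhoff they vanish on a subset of density at least $1-|S|>1-v$.

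This is an uncertainty-principle configuration: a function with spectral gap of density $v$ that vanishes on a set of density exceeding $1-v$. I would conclude with a Beurling-type or Hardy space / Paley--Wiener argument applied on each line, of the kind used by Olevskii and Ulanovskii \cite{OlevskiiUlanovskii2008} but now with the gap of length $v$ rather than a Hardy half-line. The goal is to show that $F$ vanishes on almost every line, hence $F=0$, hence $f=0$. The $L^p$ completeness statement then follows from Hahn--Banach, because $L^{p'}(S)\subseteq L^1(S)$ for bounded $S\subseteq[0,1]$.

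I expect the last step to be the main obstacle: the uncertainty principle along the lines. The restrictions of $F$ are only $L^1_{\mathrm{loc}}$ and almost periodic in a weak sense. The quantitative comparison between a vanishing set of density $>1-v$ and a spectral window of length $1-v$ is exactly critical, so one needs a sharp statement. My first attempt would be to mollify $F$ in the internal variable, which preserves the spectral window, and then use that on a.e.\ line the mollified function is a bounded function of exponential type. For such functions the zero set has density at most the type, so the strict inequality $|S|<v$ forces vanishing. Removing the mollification then requires a limiting argument in $L^1(\mathbb T^2)$.
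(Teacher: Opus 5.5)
There is a genuine gap, and it sits where you left the construction open. An auxiliary $F$ with both (i) and (ii) does not exist for nonzero $f$. The uncertainty principle you plan to use along irrational lines is also not available in that form.

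The natural candidate with (ii) is, formally, $F(x,y)=\sum_n\widehat f(n)e^{2\pi i(nx+\{n\alpha\}y)}$ on $\R^2/\Gamma^*$. But $F(\cdot,y)$ is a translate of $f$ only for $y\in\Z$, where $F(x,-k)=f(x-k\alpha)$. In $\R^2/\Gamma^*$ these lines form a single closed curve, which is a null set.

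This cannot be repaired. If the internal spectrum lies in a window of length $1-v<1$, the $n$-th Fourier coefficient of $F(\cdot,y)$ is a single exponential $b_ne^{2\pi i(n\alpha-m_n)y}$. Match it with $c(y)e^{-2\pi in\tau(y)}\widehat f(n)$ (a character twist does not help) at two heights at distance $h$. Eliminating the unknown phase and translation gives $hD\in\Z$, where $D$ is the integer determinant testing collinearity of three points $(n_i,m_{n_i})$ with $n_i\in\supp(\widehat f)$. If (i) holds on a set of heights of positive measure, then $h$ ranges over an interval (Steinhaus), so $D=0$. All $(n,m_n)$ with $n\in\supp(\widehat f)$ then lie on one line of rational slope. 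Since $|n\alpha-m_n|<1$ and $\alpha\notin\Q$, only finitely many $n$ qualify. So $f$ is a trigonometric polynomial vanishing on a set of positive measure, i.e.\ $f=0$. You therefore never get vanishing on sets of positive density along irrational lines.

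The analytic step fails for the same structural reason. Along a line of direction $(a,b)$ with $a\neq0$, the frequencies $an+b\{n\alpha\}$, $n\in\supp(\widehat f)$, are unbounded. So no mollification in the internal variable yields a function of exponential type; that happens only on internal lines ($a=0$). A purely density-based principle is false anyway. The function $\1_{[0,\delta]+\frac1m\Z}$ has spectrum in $m\Z$ (density $\frac1m$) and vanishes on a set of measure $1-m\delta>\frac1m$. This is exactly why $\Z\setminus m\Z$ is not universal for $v=1-\frac1m$.

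The missing idea is to move along the internal direction and use only the \emph{discrete} zeros at integer heights. For fixed $x$, the sequence $k\mapsto f(\{x-k\alpha\})$ is formally the Fourier coefficient sequence of $\mu_x=\sum_n\widehat f(n)e^{2\pi inx}\delta_{\{n\alpha\}}$, so $\widehat{\mu_x}(\xi)=F(x,-\xi)$. The hypothesis confines $\mu_x$ to $[0,1-v]$. The paper regularizes with $W=\1_{[0,\varepsilon/2]}*\1_{[0,\varepsilon/2]}$, taking $\varepsilon$ irrational so that $\widehat W(k)\neq0$ for all $k$; this is justified via Fubini and Fej\'er means. The result is a continuous $g_x$ supported in an interval of length $1-v+\varepsilon$ with $\widehat{g_x}(k)=\widehat W(k)f(\{x-k\alpha\})$.

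By Lemma~\ref{lem:two-sided-ergodic}, these coefficients vanish for $k$ in a set of lower density at least $1-|S|>1-v+\varepsilon$. Proposition~\ref{prop:zero-density} then forces $g_x=0$, so $f(x)=\widehat{g_x}(0)/\widehat W(0)=0$ for a.e.\ $x$. The irrationality of $\alpha$ enters through equidistribution of the orbit $x-k\alpha$, not through an irrational line flow. Your closing Hahn--Banach reduction is fine.
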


The sets $\Lambda_v$ also appear in the work of Matei and Meyer \cite{MateiMeyer2009}, who proved that $E(\Lambda_v)$ is a frame for $L^2(S)$ whenever $S\subseteq[0,1]$ is compact and $|S|<v$. The same frequency sets were studied by Kozma and Lev \cite{KozmaLev2011} at the critical density. In particular, if $v\in(0,1)\cap(\Z+\alpha\Z)$, then $E(\Lambda_v)$ is a Riesz basis for $L^2(S)$ whenever $S\subseteq[0,1]$ is a finite union of disjoint intervals of total measure $v$, each of whose lengths belongs to $\Z+\alpha\Z$. 

\subsection{Uniqueness sets for periodic weak gaps}
The preceding results admit an equivalent formulation in the language of uniqueness sets for Paley-Wiener spaces. Given $S \subseteq \R$, the Paley-Wiener space $PW_S$ is defined by
$$
PW_S = \big \{ \ft F : F \in L^2(\R), \ F = 0 \text{ a.e. on } \R \setminus S \big \},
$$
where $\ft F(x) = \int_\R F(t) e^{-2\pi i x t} \, dt$ denotes the Fourier transform of $F$.
If $S$ has finite measure, then $L^2(S) \subseteq L^1(S)$ and every element of $PW_S$ is continuous. Moreover $\langle F, e^{2\pi i \lambda \, \cdot} \rangle_{L^2(S)} = \ft F(\lambda)$ for all $F \in L^2(S)$ and $\lambda \in \R$. Consequently, for $|S| < \infty$ we have the equivalence
$$
E(\Lambda) \text{ is complete in } L^2(S) \quad \Longleftrightarrow \quad \Lambda \text{ is a uniqueness set for } PW_S.
$$
For example, in this language, Theorem \ref{thm:universal} yields a uniformly discrete set $\Lambda_{\alpha,\beta}$ of density one which is a uniqueness set for all spaces $PW_S$ with $|S| < 1$. It is natural to consider whether similar uniqueness properties can hold also when $S$ has infinite measure. In this case, $PW_S$ contains discontinuous functions and one is led to consider the subspace $PW_S \cap C(\R)$, or to impose additional regularity (as, otherwise, the point-evaluation $f(\lambda)$ is no longer meaningful).

For $\alpha \geq 0$, define the Sobolev based Paley-Wiener spaces $PW_S^{(\alpha)}$  by
$$
PW_S^{(\alpha)} = \left \{ f = \ft F \in PW_S : \int_S (1+|t|^{2\alpha}) |F(t)|^2 < \infty \right \},
$$
so that $PW^{(0)}_S = PW_S$. If $\alpha > \frac12$, then $F \in L^1(\R)$ by the Cauchy-Schwarz inequality, and therefore $PW^{(\alpha)}_S \subseteq C(\R)$.
Following \cite{OlevskiiUlanovskii2017}, we say that $S$ has a periodic weak gap if there is $A\subset [0,1]$ with $|A|<1$ such that $S\subset A+\Z$. Notice that this in particular implies that $|\mathrm{Proj}(S)| < 1$.

Olevskii and Ulanovskii showed in \cite[Theorem 3]{OlevskiiUlanovskii2017} that if $S$ has periodic weak gaps, then $PW^{(\alpha)}_S$ admits a uniformly discrete uniqueness set for every $\alpha > \frac12$. The periodic structure is moreover crucial: for sets $S$ with randomly spaced gaps, every uniformly discrete set is a nonuniqueness set for $PW^{(\alpha)}_S$ \cite[Section 6]{OlevskiiUlanovskii2017}.

Based on their findings, they posed the following question \cite[Section 7.2]{OlevskiiUlanovskii2017}: if $S$ has periodic weak gaps, does the space $PW_S \cap C(\R)$ admit a uniformly discrete uniqueness set? More generally, one can ask if the regularity condition $\alpha > \frac12$ can be weakened. Our final result shows that the Sobolev exponent $\frac12$ is a sharp threshold, giving a negative answer to these questions.

\begin{theorem}\label{thm:sobolev}
Let $A\subseteq[0,1]$ be measurable with $0<|A|<1$,
set $S=A+\Z$, and let $\alpha\geq0$.
The following statements are equivalent:
\begin{enumerate}
    \item The space $PW^{(\alpha)}_S\cap C(\R)$ admits
    a uniformly discrete uniqueness set.
    \item $\alpha>\frac12$.
\end{enumerate}
\end{theorem}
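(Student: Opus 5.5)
The implication (2)$\Rightarrow$(1) is \cite[Theorem 3]{OlevskiiUlanovskii2017}: $S=A+\Z$ with $|A|<1$ has a periodic weak gap, and for $\alpha>\frac12$ we have $PW^{(\alpha)}_S\subseteq C(\R)$. So the work is the converse. We assume $\alpha\le\frac12$ and show that no uniformly discrete $\Lambda$ is a uniqueness set. Since $PW^{(\alpha)}_S\supseteq PW^{(1/2)}_S$ for $\alpha\le\frac12$, it suffices to treat $\alpha=\frac12$. Thus, given a uniformly discrete $\Lambda$ with separation $\gamma>0$, we must construct a nonzero $F\in L^2(S)$ with $\int_S(1+|t|)|F(t)|^2\,dt<\infty$, such that $f=\ft F$ is continuous and $f|_\Lambda=0$.

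For the construction, fix an interval $I\subset\R$ with $|A\cap I|>0$. After translating and replacing $A$ by a subset, we may assume $A$ contains a set of positive measure inside a short interval $J$. Put $G_k(t)=g(t-k)$, where $g\in L^2(A\cap J)$ is nonzero, and look for
$$F=\sum_{k\in\Z}c_k\, g(\cdot-k),\qquad f(x)=\hat g(x)\,P(x),\quad P(x)=\sum_k c_k e^{-2\pi i k x}.$$
Then $f$ vanishes on $\Lambda$ as soon as the $1$-periodic function $P$ vanishes on the projection $\{\lambda\}$, $\lambda\in\Lambda$. The weighted condition becomes $\sum_k(1+|k|)|c_k|^2<\infty$, i.e.\ $P\in H^{1/2}(\T)$. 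Continuity of $f$ holds if $P$ is continuous. The difficulty is that $\{\lambda\}$ may be dense in $[0,1]$, and a continuous $P$ vanishing on a dense set is zero. So this pure product ansatz fails, and the translates must be allowed to use genuinely different functions on different periods. I would instead take $F=\sum_k F_k(\cdot-k)$ with $F_k\in L^2(A)$ varying in $k$, and think of $f$ locally: near $\lambda$, only the frequency content matters.

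The key idea is that $H^{1/2}$ is exactly the critical Sobolev space in which functions can be unbounded (e.g.\ $\log\log(1/|x|)$) and continuity is not automatic. The relevant fact is $H^{1/2}$-capacity: points have zero capacity in dimension one for $s=\frac12$. I would use this as follows. Choose any nonzero $F_0\in L^2(S)$ with compact support and $\int(1+|t|)|F_0|^2<\infty$; its transform $f_0$ is an entire function of exponential type. Then correct $f_0$ on $\Lambda$ by subtracting $\sum_{\lambda}f_0(\lambda)h_\lambda$, where $h_\lambda\in PW^{(1/2)}_S\cap C(\R)$ satisfies $h_\lambda(\lambda)=1$, $h_\lambda(\lambda')=0$ for other $\lambda'\in\Lambda$ (approximately, then fix via a Neumann series), and has small norm. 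Small-norm interpolating functions exist because $S$ is unbounded and periodic: take $h_\lambda(x)=e^{2\pi i N\cdot}$-modulated bumps $\ft(\1_{A+[N,N+M]}\phi)$. Spreading the mass of $F$ over $M$ periods with amplitude $\sim M^{-1}$ gives $L^2$ norm $\sim M^{-1/2}$ and weighted norm $\sim \int_N^{N+M}t\,M^{-2}dt$. This is bounded, not small, exactly at exponent $\frac12$; the usual logarithmic trick (weights $1/(t\log t)$ over dyadic blocks) should make it $O(1/\log M)$ while keeping $h_\lambda(\lambda)\approx 1$ in the sense of the integral $\int F$. Localization near $\lambda$ with off-point decay at scale $\gamma$ comes from the smooth cutoff.

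The main obstacle is this last step. We must simultaneously get $h_\lambda(\lambda)=1$, decay at other points of $\Lambda$ summable against $f_0(\lambda)$, continuity of the infinite sum, and a summable total $H^{1/2}$-weighted norm. I would first try to prove the estimate for a single point via the logarithmic-capacity computation, and then use uniform discreteness and the decay of $f_0$ (taken in $PW$ of a bounded set, hence with rapid decay after multiplying by a smooth bump) to sum.
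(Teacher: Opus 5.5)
Your reduction to $\alpha=\frac12$ and the citation for (2)$\Rightarrow$(1) are fine. The logarithmic spreading is also the right mechanism for getting $h(\lambda)=1$ with small $\norm{h}_{1/2,S}$: it is what averaging over $N$ dyadic blocks of periods achieves, with squared norm $O(1/N)$. But the proof stops exactly where the work is, and your construction does not give the properties you need from $h_\lambda$.

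\begin{itemize}
\item \textbf{Echoes, not localization.} If the Fourier-side function is spread with slowly varying amplitude over $A+[N,N+M]$, then $h_\lambda(x)\approx c\,\widehat{\1_A}(x-\lambda)\,\Phi(x-\lambda)$, where $\Phi$ is a $1$-periodic kernel peaked at $\Z$. So the cutoff localizes $h_\lambda$ near the whole lattice $\lambda+\Z$, not near $\lambda$ at scale $\gamma$. The echoes $h_\lambda(\lambda+m)\approx\widehat{\1_A}(m)/\widehat{\1_A}(0)$ are of order one; for $A=[0,\frac12]$ they decay only like $1/|m|$ along odd $m$. Near $\lambda+\Z$ the smallness is not uniform.
\item \textbf{The echoes cannot all be removed.} Changing the profile within a period does not help: $g\in PW_A$ with $g(m)=\delta_{m0}$ for all $m\in\Z$ would force the Fourier-side function to equal $1$ a.e.\ on $[0,1]$.
\item \textbf{Small norm does not control values.} Point evaluation is unbounded on $PW^{(1/2)}_S$. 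Hence the matrix $(h_\lambda(\lambda'))$ need not be close to the identity, and neither the Neumann series nor the local uniform convergence of $\sum_\lambda f_0(\lambda)h_\lambda$ is justified.
\item \textbf{No decay of $f_0$.} If $A$ has empty interior, no nonzero element of $L^2(S)$ is continuous, so $f_0\notin L^1(\R)$ and it has no rapid decay.
\end{itemize}

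Two ideas are missing.

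\emph{First}, put the exact vanishing into the non-periodic factor and use the periodic factor only for smallness. Translate the target point to $0$. For a finite set $E$ and a compact $K$ with $0\notin K\cup E$, choose $p\in PW_A$ with $p(0)=1$ and $p=0$ on $E\cup(K\cap\Z)$. This is possible because evaluation at finitely many distinct points maps $PW_A$ onto $\C^{m+1}$, by analyticity of exponential sums and a Vandermonde determinant. Then set $f_j=pR_j$ with $R_j(x)=2^{-j}\sum_{k=2^j}^{2^{j+1}-1}e^{-2\pi ikx}$. This gives:
\begin{itemize}
\item $f_j$ still interpolates exactly;
\item $\sup_K|f_j|\to0$, because $p$ kills the finitely many integer echoes in $K$ while $R_j\to0$ uniformly away from $\Z$;
\item $f_j$ has Fourier support in $A+\{2^j,\dots,2^{j+1}-1\}\subseteq S$, with $\norm{f_j}_{1/2,S}$ bounded in $j$;
\item averaging $N$ consecutive $f_j$, which have disjoint Fourier supports, makes the squared norm $O(1/N)$.
\end{itemize}

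\emph{Second}, replace the simultaneous correction by an induction over an enumeration $\lambda_1,\lambda_2,\dots$ of $\Lambda$. At step $n$, add $-f_{n-1}(\lambda_n)$ times a function of the above kind that equals $1$ at $\lambda_n$, vanishes at $x_0,\lambda_1,\dots,\lambda_{n-1}$, and is small both in norm and in sup over $[-n,n]\setminus(\lambda_n-1,\lambda_n+1)$. This never requires controlling a correction at the infinitely many untreated points of $\Lambda$. It converges in $PW^{(1/2)}_S$ and locally uniformly, and yields a continuous $f$ with $f(x_0)=1$ and $f|_\Lambda=0$.
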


Notice that the implication $(2) \implies (1)$ is the theorem of Olevskii and Ulanovskii \cite[Theorem 3]{OlevskiiUlanovskii2017}. The new content of Theorem \ref{thm:sobolev} is the reverse direction: at and below the endpoint $\alpha = \frac12$ no uniformly discrete uniqueness set exists. In particular, choosing $\alpha = 0$ and any $A$ as above yields a negative answer to the endpoint question described above.

\begin{corollary}\label{cor:OU}
    There exists a measurable set $S \subseteq \R$ with periodic weak gaps such that $PW_S \cap C(\R)$ does not admit a uniformly discrete uniqueness set. One may take $S = [0,\frac12] + \Z$.
\end{corollary}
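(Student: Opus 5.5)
The corollary follows from Theorem \ref{thm:sobolev}, using the direction (1) $\Rightarrow$ (2) in contrapositive form with $\alpha = 0$. Take $A = [0,\frac12]$, which is measurable with $0 < |A| = \frac12 < 1$, and set $S = A + \Z = [0,\frac12] + \Z$.

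First I check that $S$ has a periodic weak gap in the sense of \cite{OlevskiiUlanovskii2017}. By construction $S \subset A + \Z$ with $A \subset [0,1]$ and $|A| < 1$, so this holds.

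Next I identify the relevant space. With $\alpha = 0$ the weight $1 + |t|^{0}$ equals $2$, so the weighted condition in the definition of $PW^{(0)}_S$ is just $F \in L^2(S)$. Hence $PW^{(0)}_S = PW_S$, as already noted in the paper, and therefore $PW^{(0)}_S \cap C(\R) = PW_S \cap C(\R)$.

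Finally, since $\alpha = 0$ does not satisfy $\alpha > \frac12$, statement (2) of Theorem \ref{thm:sobolev} fails. By the equivalence, statement (1) fails as well: $PW_S \cap C(\R)$ admits no uniformly discrete uniqueness set. This proves the corollary with the explicit choice $S = [0,\frac12] + \Z$.

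There is no real obstacle here. All of the substantive content lies in the implication (1) $\Rightarrow$ (2) of Theorem \ref{thm:sobolev}, which we are allowed to assume. The only points to verify are the identification $PW^{(0)}_S = PW_S$ and that $A = [0,\frac12]$ meets the theorem's hypotheses, and both are immediate.
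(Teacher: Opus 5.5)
Your proposal is correct and matches the paper's route. The corollary is Theorem~\ref{thm:sobolev}, whose substantive direction is Theorem~\ref{thm:sobolev-negative}, specialized to $\alpha=0$ and $A=[0,\frac12]$, together with the identification $PW^{(0)}_S=PW_S$ and the immediate check that $S=A+\Z$ has a periodic weak gap.
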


\subsection{Higher dimensions}\label{sec:highdim}
All of our results admit direct extensions to higher dimensions, and the proofs are straightforward generalizations of the ones in the one-dimensional case. They do however require messier notation, which we believe obscures the key arguments in comparison. For this reason, we provide only their statements in Section \ref{sec:higher-dim}. The companion Lean verifications are in any case done directly in higher dimensions, for the sake of providing a complete proof of correctness. A discussion of the Lean formalization can be found in Section \ref{sec:lean}.

\subsection*{Usage of Large Langauge Models}\label{sec:llmusageandlean}

Large Language Models used under the author's guidance were employed as a research tool in the development of this work. For Theorem~\ref{thm:sobolev}, the authors directed GPT-5.4 toward a construction using successive
interpolation corrections and suggested treating the critical exponent by averaging functions with disjoint frequency supports.

Next, we focused on the problem of universal complete systems of exponentials. We started with the question of existence of a uniformly discrete set $\Lambda$ such that $E(\Lambda)$ is complete in $L^2(S)$ for every measurable set $S\subset\R$ with $|S|<1$. Initial work with GPT-5.5 (both Chat and Codex versions), continued with GPT-5.6, focused on proving that no such set exists. This led to Theorem~\ref{thm:null}, which shows that perturbations of the integers tending to zero cannot have this universality property. When this approach yielded no further progress, we reconsidered the problem: we instead attempted to prove universality for a set that had emerged as an apparent obstruction to generalizing the negative approach. In our present notation, this was the set $\Lambda_{\alpha,1/10}$ with $\alpha=\frac{1+\sqrt{5}}{2}$, which was obtained by GPT-5.6.

The resulting proof was subsequently extended to all irrational $\alpha$, after the authors observed that the argument did not depend on this particular choice. The generalization using the parameter $\beta$ was designated by the authors.

In parallel, the authors suspected that the same sets might also give uniqueness in the weaker $L^1$ setting, instead of requiring $L^2$ integrability. This extension was confirmed and executed using GPT-5.6.

Putting all the above together, this gives rise to Theorem~\ref{thm:universal}, which answers both questions (universal completeness and uniqueness in an $L^1$-sense) at the same time.

Finally, we discuss Theorem~\ref{thm:integer}. The starting point was an unpublished result of the third author establishing universal completeness
for densities $$v\in\left\{1-\tfrac{k}{p}:
p\text{ prime},\ k\in\mathbb N,\ 1\leq k\leq\tfrac{p}{2}\right\} \in [\tfrac12,1).$$ Initial investigations used GPT-5.4, GPT-5.5, and GPT-5.6. Since the Hardy space approach for proving the result of the third author did not extend to all densities $v \in [\frac12,1)$, the authors directed GPT-5.6 to explore a cut-and-project construction considered in \cite{MateiMeyer2009,KozmaLev2011} that is flexible enough to cover all densities $v \in [\frac12,1)$. The argument was subsequently extended to cover the entire range $v\in[0,1]$, yielding Theorem~\ref{thm:integer}.

All AI-generated material obtained in the process discussed above were modified and improved significantly, with the aim of shortening them, clarifying the intuition behind them, and highlighting the main ideas. This was achieved combining human ideas and further use of AI tools: GPT-5.6, and Claude Fable 5 and Claude Fable 5.1.

Finally, the Lean formalization was conducted with the help of Codex-5.5 and Codex-5.6.

\section{Universal completeness on finite measure sets}
\label{sec:L1}

In this section we prove Theorem~\ref{thm:universal}. Let
$\alpha\notin\Q$ and $\beta\in\Q$ with $0 < |\beta|<1/2$.
We use the centered perturbations
\begin{equation}\label{eq:seq}
    \delta_n=\beta\left(\{n\alpha\}-\frac12\right),
 \quad \lambda_n=n+\delta_n,\quad \Lambda=\{\lambda_n:n\in\Z\}.
\end{equation}
The frequencies in \eqref{eq:seq} differ from those in
Theorem~\ref{thm:universal} by $-\beta/2$. We work with these shifted frequencies in order to simplify some of the later computations. Clearly, it suffices to prove Theorem~\ref{thm:universal}
for the choice \eqref{eq:seq}.

\subsection{Two auxiliary lemmas}
We start by recording two basic lemmas. The first one is a well-known statement from ergodic theory \cite{Walters1982,EinsiedlerWard2011}. 

\begin{lemma}
\label{lem:two-sided-ergodic}
Let \(\alpha\notin\Q\) and let \(\varphi\in L^1(\T)\). Then, for almost
every \(x\in\T\),
\[
    \frac1{2N+1}\sum_{|k|\leq N}\varphi(x-k\alpha)
    \longrightarrow \int_\T\varphi(y)\,dy.
\]
\end{lemma}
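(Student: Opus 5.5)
The plan is to deduce the statement from Birkhoff's pointwise ergodic theorem, applied separately to the forward and backward orbits of the irrational rotation, and then average the two.

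Let $R\colon\T\to\T$ be the rotation $R(x)=x+\alpha$. It preserves Lebesgue measure, and it is ergodic because $\alpha\notin\Q$. The proof of ergodicity is the standard Fourier argument. Suppose $g\in L^2(\T)$ satisfies $g\circ R=g$. Comparing Fourier coefficients gives $\hat g(m)e^{2\pi i m\alpha}=\hat g(m)$ for every $m$. For $m\neq 0$ we have $e^{2\pi i m\alpha}\neq 1$ since $m\alpha\notin\Z$, so $\hat g(m)=0$. Hence $g$ is constant. Applying this to indicator functions of invariant sets shows that every invariant set has measure $0$ or $1$.

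The inverse map $R^{-1}(x)=x-\alpha$ is also a measure-preserving ergodic rotation, since $-\alpha$ is irrational as well.

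Birkhoff's theorem applied to $R^{-1}$ and to $R$ with the function $\varphi\in L^1(\T)$ gives a full-measure set of $x$ on which both
\[
\frac1{N+1}\sum_{k=0}^{N}\varphi(x-k\alpha)\to\int_\T\varphi
\quad\text{and}\quad
\frac1{N}\sum_{k=1}^{N}\varphi(x+k\alpha)\to\int_\T\varphi
\]
hold. These two sets have full measure, so their intersection does too.

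For $x$ in this intersection, split the two-sided sum as
\[
\sum_{|k|\le N}\varphi(x-k\alpha)=\sum_{k=0}^{N}\varphi(x-k\alpha)+\sum_{k=1}^{N}\varphi(x+k\alpha).
\]
Dividing by $2N+1$, the left-hand sum contributes $\frac{N+1}{2N+1}\cdot(\text{a forward average})$ and the right-hand sum contributes $\frac{N}{2N+1}\cdot(\text{a backward average})$. Since $\frac{N+1}{2N+1}\to\frac12$ and $\frac{N}{2N+1}\to\frac12$, the whole expression tends to $\frac12\int\varphi+\frac12\int\varphi=\int_\T\varphi$.

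There is no real obstacle in this argument. The only points needing care are verifying ergodicity of the irrational rotation and remembering to intersect the two full-measure sets. The limit is the integral itself, with no normalizing constant, because $\T=\R/\Z$ has measure one.
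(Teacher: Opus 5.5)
Your proposal is correct and follows the route the paper intends: the paper gives no proof and simply cites Birkhoff's pointwise ergodic theorem for the ergodic irrational rotation (Walters, Einsiedler--Ward), and your splitting of the two-sided sum into two one-sided Birkhoff averages is the standard way to obtain the two-sided version. One small detail you skipped is that Birkhoff for $R$ controls $\frac1N\sum_{k=0}^{N-1}\varphi(x+k\alpha)$ rather than $\frac1N\sum_{k=1}^{N}\varphi(x+k\alpha)$; the difference $(\varphi(x+N\alpha)-\varphi(x))/N$ tends to $0$ almost everywhere (apply Birkhoff to $\varphi\circ R$, or deduce it from the convergence of the averages), so this is harmless, as is your swapping of the labels ``forward'' and ``backward''.
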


The second lemma is an application of Jensen's formula.

\begin{lemma}
\label{lem:jensen-density-comparison}
Let \(F\neq 0\) be meromorphic on \(\C\), and assume that
\[
 \limsup_{R\to\infty}\frac1R\int_0^{2\pi}\log|F(Re^{i\theta})|\,d\theta
 \leq0.
\]
Then, counting with multiplicity,
\[
 \liminf_{R\to\infty}
 \frac{\#\{\text{zeros of }F\text{ in }|z|\leq R\}}{2R}
 \leq
 \limsup_{R\to\infty}
 \frac{\#\{\text{poles of }F\text{ in }|z|\leq R\}}{2R}.
\]
\end{lemma}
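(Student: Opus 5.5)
Write $n(r)$ and $p(r)$ for the number of zeros and of poles of $F$ in $|z|\le r$, counted with multiplicity. The plan is to apply Jensen's formula for meromorphic functions and then average in $R$.

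First I would normalize. Dividing $F$ by $c z^m$, where $m$ is the order of $F$ at $0$, makes $F(0)=1$. This changes $\int_0^{2\pi}\log|F(Re^{i\theta})|\,d\theta$ by $O(\log R)=o(R)$, so the hypothesis survives. It also changes the zero and pole counts by at most $|m|$, which does not affect either density. With this normalization, Jensen's formula reads
\[
\frac1{2\pi}\int_0^{2\pi}\log|F(Re^{i\theta})|\,d\theta = \int_0^R \frac{n(t)-p(t)}{t}\,dt =: N(R)-P(R).
\]
The hypothesis then says $N(R)-P(R)\le o(R)$.

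Next I would argue by contradiction. Let $a$ denote the left-hand $\liminf$ and $b$ the right-hand $\limsup$, and suppose $a>b$. Pick numbers $b<b'<a'<a$. Then for $t\ge t_0$ we have $n(t)\ge 2a't$ and $p(t)\le 2b't$. Integrating from $t_0$ to $R$, the contribution to $N(R)-P(R)$ is at least $2(a'-b')(R-t_0)$. The part of the integral over $[0,t_0]$ is a fixed constant, since $F(0)=1$ means there are no zeros or poles near $0$ and the integral converges. Hence $N(R)-P(R)\ge 2(a'-b')R-C$. This contradicts $N(R)-P(R)\le o(R)$, so $a\le b$.

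The only delicate point is bookkeeping. One must check that the normalization by $z^m$ and the choice of $t_0$ contribute only $o(R)$. One must also handle the case $b=\infty$, where the claim is trivial. Beyond this there is no real obstacle: because the hypothesis controls the average of $\log|F|$ with the factor $1/R$, the integrated counts suffice directly, and no differencing argument is needed.
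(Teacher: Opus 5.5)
Your proposal is correct and follows essentially the same route as the paper: normalize at the origin at an $O(\log R)$ cost, use Jensen's formula to get $\int_0^R (n(t)-p(t))/t\,dt\le o(R)$, and contradict this with the linear lower bound coming from $n(t)\ge 2a't$ and $p(t)\le 2b't$ for large $t$. The only cosmetic difference is that the paper applies Jensen's formula only on radii whose circle contains no zero or pole (such radii are unbounded, which suffices). You use it for every $R$, which is also legitimate, since logarithmic singularities on the circle are integrable.
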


\begin{proof}
Up to multiplying \(F\) by \(z^{-k}\), where \(k\in\Z\) is the order of
\(F\) at the origin, we may assume that \(0<|F(0)|<\infty\) as this
changes the numbers of zeros and poles by at most \(|k|\), and the
integral above by \(O(\log R)\).  Denote by
\(n(t)\) and \(p(t)\) the numbers of zeros and poles of \(F\) in
\(|z|\leq t\), counted with multiplicity.  Whenever the circle
\(|z|=R\) contains no zero or pole of \(F\), Jensen's
formula gives
\[
    \int_0^{R}\frac{n(t)-p(t)}{t}\,dt
    =\frac1{2\pi}\int_0^{2\pi}\log|F(R e^{i\theta})|\,d\theta
     -\log|F(0)|.
\]
If the claimed inequality failed, there would exist
\(\sigma>\rho\geq0\) such that \(n(t)\geq\sigma t\) and
\(p(t)\leq\rho t\) for all sufficiently large \(t\). Hence,
for some constant \(C>0\),
\[
    \int_0^R \frac{n(t)-p(t)}{t}\,dt
    \geq (\sigma-\rho)R-C
\]
for all sufficiently large \(R\). On the other hand, the hypothesis implies that the right-hand side of Jensen's formula is bounded above by \(o(R)\). This is a contradiction, since there are arbitrarily large radii \(R\) for which the circle
\(|z|=R\) contains no zero or pole of \(F\).
\end{proof}

\subsection{A Fourier series identity}
Let $S\subseteq\R$ be measurable with $|S|<1$, and let
$f\in L^1(S)$, extended by zero outside $S$. Further, let $\Lambda = \{ \lambda_n \}$ be given as in \eqref{eq:seq}. We aim to prove Theorem \ref{thm:universal}.

By taking complex conjugates, it suffices to prove that $f=0$ whenever
\begin{equation}\label{eq:fourier-zeros}
 \int_\R f(t)e^{2\pi i\lambda_nt}\,dt=0,
 \quad n\in\Z.
\end{equation}
We start by choosing for $j\in\Z$ a measurable representative of the functions $f_j : [0,1] \to \C$ defined by
\[
 f_j(t):=f(j+t), \quad t\in[0,1).
\]
Then
\begin{equation}\label{eq:layer-sums}
 \sum_{j \in \Z}|\{f_j\ne0\}|=|\{f\ne0\}|\leq|S|<1,
 \quad
 \sum_{j\in \Z}\|f_j\|_1=\|f\|_1,
\end{equation}
and using
$\delta_n=\beta(\{n\alpha\}-1/2)$, it follows from \eqref{eq:fourier-zeros} that

\begin{equation}\label{eq:sampling-identity}
 0 =\int_0^1\sum_{j \in \Z} f_j(t)\,
 e^{2\pi i\beta(t+j)(\{n\alpha\}-1/2)}\,
 e^{2\pi int}\,dt,
 \quad n\in\Z.
\end{equation}

We next consider the Fourier expansion
\begin{equation}\label{eq:sinc-expansion}
 e^{2\pi ia(y-1/2)}
 =\sum_{\ell\in\Z}\frac{\sin(\pi a)}{\pi(a-\ell)}\,
 e^{2\pi i\ell y},\quad y\in(0,1).
\end{equation}
Substituting \eqref{eq:sinc-expansion} formally into
\eqref{eq:sampling-identity}, and then making the change of variables
$x=\{t+\ell\alpha\}$, we get
\begin{equation}\label{eq:sampling-identity2}
  \int_0^1\sum_{j,\ell}
 \frac{\sin\!\bigl(\pi\beta(\{x-\ell\alpha\}+j)\bigr)}
      {\pi\bigl(\beta(\{x-\ell\alpha\}+j)-\ell\bigr)}\,
 f_j(\{x-\ell\alpha\})\,
 e^{2\pi inx}\,dx=0,
 \qquad n\in\Z.  
\end{equation}

This suggests introducing
\[
 M(x):=\sum_{j,\ell}
 \frac{\sin\!\bigl(\pi\beta(\{x-\ell\alpha\}+j)\bigr)}
      {\pi\bigl(\beta(\{x-\ell\alpha\}+j)-\ell\bigr)}
 f_j(\{x-\ell\alpha\}),
\]
as, if interchanging of summation and integration were
justified, \eqref{eq:sampling-identity2} would imply that all Fourier
coefficients of \(M\) (and thus $M$ itself) vanish.

Assuming this were the case, by the structure of the sum above, choosing $\{x-k\alpha\}$ in place of $x$ (with $k\in\Z$) we get new
information at $x$ itself: relabeling also $\ell$ by
$\ell-k$ in the sum, we find that
\[
 0\equiv M(\{x-k\alpha\})=\sum_{j,\ell}
 \frac{\sin\!\bigl(\pi\beta(\{x-\ell\alpha\}+j)\bigr)}
      {\pi\bigl(\beta(\{x-\ell\alpha\}+j)-\ell+k\bigr)}\,
 f_j(\{x-\ell\alpha\}).
\]
The point is that we can shift the denominators by $k\in\Z$, arbitrarily -- yet the sum on the right is always zero. By complex analytic reasons, this will force all coefficients to be zero as well, and in particular $f_j(x)=0$, which will give the result.

Now we make the argument rigorous. To justify the convergence issues, we consider the formal difference
\(M(\{x-k\alpha\})-M(x)\). To be precise, define
\begin{equation}\label{eq:Mk-definition}
\begin{aligned}
 M_k(x):=\sum_{j,\ell}
 &\frac{\sin\!\bigl(\pi\beta(\{x-\ell\alpha\}+j)\bigr)}{\pi}
 f_j(\{x-\ell\alpha\})\\[-1mm]
 &\quad\times\left(
 \frac1{\beta(\{x-\ell\alpha\}+j)-\ell+k}
 -\frac1{\beta(\{x-\ell\alpha\}+j)-\ell}
 \right).
\end{aligned}
\end{equation}
We take the right-hand side as the definition of \(M_k\), without
requiring \(M\) itself to be defined. The subtraction cancels the
non-summable first-order tail, since
\[
 \frac1{a-\ell+k}-\frac1{a-\ell}
 =\frac{-k}{(a-\ell+k)(a-\ell)}
 =O_k(|\ell|^{-2}).
\]
Thus \(M_k\) has substantially better convergence properties than the
formal series \(M\). Moreover, as the next lemma shows, the original
sampling identities imply that every \(M_k\) vanishes.  

\begin{lemma}\label{lem:M-fourier}
For every $k\in\Z$, the series in \eqref{eq:Mk-definition} converges
absolutely in $L^1(0,1)$ and defines a function $M_k$
which vanishes almost everywhere in $(0,1)$.
\end{lemma}

\begin{proof}
For $k=0$ the claim is immediate. Therefore, let $k\in\Z\setminus\{0\}$.
The Fourier coefficients of $y\mapsto e^{2\pi ia(y-1/2)}$ are given by
\[
 \int_0^1e^{2\pi ia(y-1/2)}e^{-2\pi i\ell y}\,dy
 =\frac{\sin(\pi a)}{\pi(a-\ell)}.
\]
Consequently, the values
\begin{equation}\label{eq:difference-coefficient}
 \frac{\sin(\pi a)}{\pi}
 \left(\frac1{a-\ell+k}-\frac1{a-\ell}\right)
 =\frac{\sin(\pi a)}{\pi}\,
 \frac{-k}{(a-\ell+k)(a-\ell)}
\end{equation}
are the Fourier coefficients of
$\bigl(e^{2\pi iky}-1\bigr)e^{2\pi ia(y-1/2)}$.
Since
$|\sin(\pi a)|\leq\pi|a-m|$ for every $m\in\Z$, the absolute value of
\eqref{eq:difference-coefficient} is at most
$|k|/\max(|a-\ell|,|a-\ell+k|)\leq2$, and at most
$2|k|/(\pi|a-\ell|^2)$ when $|a-\ell|\geq2|k|$. We therefore obtain that

\begin{equation}\label{eq:coefficient-bound}
 \sum_{\ell\in\Z}
 \left|\frac{\sin(\pi a)}{\pi}
 \left(\frac1{a-\ell+k}-\frac1{a-\ell}\right)\right|
 \leq C_k
 \quad\text{for every }a\in\R.
\end{equation}

After the change of variables $x=\{t+\ell\alpha\}$, the sum of the $L^1(0,1)$-norms of the terms in \eqref{eq:Mk-definition} is at most $C_k\sum_j\|f_j\|_1<\infty$, by
\eqref{eq:coefficient-bound} and \eqref{eq:layer-sums}. Hence the series converges absolutely in $L^1(0,1)$, and Fubini's theorem allows for termwise integration and rearrangement of the sums.

The $1$-periodic function given by
$\bigl(e^{2\pi iky}-1\bigr)e^{2\pi ia(y-1/2)}$ on $[0,1)$ is continuous,
and its Fourier coefficients \eqref{eq:difference-coefficient} are
absolutely summable by \eqref{eq:coefficient-bound}. Its Fourier
series therefore converges to it at every point.  At $y=\{n\alpha\}$
we obtain
\[
 \sum_{\ell\in\Z}\frac{\sin(\pi a)}{\pi}
 \left(\frac1{a-\ell+k}-\frac1{a-\ell}\right)e^{2\pi i\ell n\alpha}
 =\bigl(e^{2\pi ikn\alpha}-1\bigr)e^{2\pi ia(\{n\alpha\}-1/2)}.
\]

Using this identity with $a=\beta(t+j)$, we compute, for every $n\in\Z$,
\begin{align*}
 \int_0^1 M_k(x)\,e^{2\pi inx}\,dx
 &=\sum_{j,\ell}\int_0^1 f_j(t)e^{2\pi int}
 \frac{\sin(\pi a)}{\pi}
 \Bigl(\frac1{a-\ell+k}-\frac1{a-\ell}\Bigr)
 e^{2\pi i\ell n\alpha}dt\\
 &=\int_0^1\sum_j f_j(t)e^{2\pi int}\sum_{\ell}
 \frac{\sin(\pi a)}{\pi}
 \Bigl(\frac1{a-\ell+k}-\frac1{a-\ell}\Bigr)
 e^{2\pi i\ell n\alpha}dt\\
 &=\bigl(e^{2\pi ikn\alpha}-1\bigr)
 \int_0^1\sum_j f_j(t)\,
 e^{2\pi i\beta(t+j)(\{n\alpha\}-1/2)}\,e^{2\pi int}\,dt=0.
\end{align*}
The last equality is \eqref{eq:sampling-identity}. By uniqueness of
Fourier coefficients in $L^1(0,1)$, we have $M_k=0$ almost everywhere.
\end{proof}

\subsection{A meromorphic function}
We now replace the integer parameter $k$ in \eqref{eq:Mk-definition}
by a complex variable $z$, i.e. we define
\begin{equation}\label{eq:M-definition}
 M_z(x):=\sum_{j,\ell\in\Z}
 r_{\ell,j}\left(\frac1{z-p_{\ell,j}}+\frac1{p_{\ell,j}}\right),
\end{equation}
where
\begin{equation}\label{eq:M-poles}
 p_{\ell,j}=\ell-\beta(\{x-\ell\alpha\}+j),
 \quad
 r_{\ell,j}=\frac{\sin\!\bigl(\pi\beta(\{x-\ell\alpha\}+j)\bigr)}{\pi}\,
 f_j(\{x-\ell\alpha\}).
\end{equation}
At $z=k\in\Z$, the series coincides with \eqref{eq:Mk-definition}. The next lemma establishes the convergence
of the series $M_z$ and describes its poles and zeros.

\begin{lemma}\label{lem:M-meromorphic}
For $z\in\C$, let the series $M_z$ be defined as in \eqref{eq:M-definition}.
For almost every $x\in[0,1)$, it defines a meromorphic function of $z$
with the following properties.
\begin{enumerate}[(i)]
\item $\sum_{j,\ell\in\Z}|r_{\ell,j}|/(1+p_{\ell,j}^2)<\infty$.
\item Its poles are precisely the values $p_{\ell,j}$ for which
$f_j(\{x-\ell\alpha\})\ne0$, and they are real, simple, pairwise
distinct, and disjoint from $\Z$.
\item The residue at $p_{\ell,j}$ is $r_{\ell,j}$.

\item We have $M_k(x)=0$ for every $k\in\Z$. If
$z\mapsto M_z(x)$ is not identically zero, its zeros have lower
density at least one:
\begin{equation}\label{eq:M-integer-zeros}
 \liminf_{R\to\infty}
 \frac{\#\{\text{zeros in }|z|\leq R\}}{2R}\geq1.
\end{equation}
\item The poles have upper density at most $|S|<1$:
\begin{equation}\label{eq:M-pole-density}
 \limsup_{R\to\infty}
 \frac{\#\{\text{poles in }|z|\leq R\}}{2R}\leq|S|<1.
\end{equation}
\end{enumerate}
By (ii), if $z\mapsto M_z(x)$ vanishes identically then
$f_j(x)=0$ for every $j\in\Z$.
\end{lemma}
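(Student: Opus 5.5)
We prove items (i)--(v) in order, each for almost every $x$, and then intersect the countably many full-measure sets involved.

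For (i) we integrate in $x$ and change variables $t=\{x-\ell\alpha\}$, as in Lemma~\ref{lem:M-fourier}. This gives
\[
\int_0^1\sum_{j,\ell}\frac{|r_{\ell,j}|}{1+p_{\ell,j}^2}\,dx\le\sum_j\int_0^1|f_j(t)|\sum_\ell\frac{1}{1+(\ell-\beta(t+j))^2}\,dt\le C\sum_j\|f_j\|_1<\infty.
\]
The inner sum over $\ell$ is bounded uniformly in the real shift $a=\beta(t+j)$. Hence the integrand is finite for a.e.\ $x$.

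Given (i), the series in \eqref{eq:M-definition} converges locally uniformly on $\C$ minus the closure of the poles. Indeed, for $|z|\le R$ and $|p|\ge 2R$ we have $|1/(z-p)+1/p|=|z|/|p(z-p)|\lesssim R/(1+p^2)$. The poles are real and discrete, since $|p_{\ell,j}-\ell|\le|\beta|(1+|j|)$ and $|p_{\ell,j}|\to\infty$ along the support. So $M_z(x)$ is meromorphic with residues $r_{\ell,j}$, which gives (iii).

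For (ii) we need that for a.e.\ $x$ the points $p_{\ell,j}$ with nonzero residue are pairwise distinct and avoid $\Z$. Nonvanishing of $r_{\ell,j}$ needs $f_j\ne0$ and $\sin(\pi\beta(t+j))\ne0$. Since $\beta$ is rational, the latter fails only on a null set of $t$, which we discard.

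Distinctness is the delicate point. Suppose $p_{\ell,j}=p_{\ell',j'}$ with $\ell\ne\ell'$. This reads $\ell-\ell'=\beta(\{x-\ell\alpha\}-\{x-\ell'\alpha\}+j-j')$. The difference of fractional parts equals $(\ell'-\ell)\alpha$ modulo an integer, so the relation becomes $(\ell-\ell')(1+\beta\alpha)\in\beta\Z$. This is impossible because $\alpha\notin\Q$ and $\beta\in\Q\setminus\{0\}$. If $\ell=\ell'$, then $j=j'$. The poles avoid $\Z$ exactly when $\beta(t+j)\notin\Z$, which also holds off a null set.

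For (iv), Lemma~\ref{lem:M-fourier} gives $M_k=0$ a.e.\ for each $k$. Intersecting over countably many $k$, we get $M_k(x)=0$ for all $k$ simultaneously for a.e.\ $x$. Zeros at all integers have counting density $1$.

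For (v), the poles in $|z|\le R$ correspond to pairs with $f_j(\{x-\ell\alpha\})\ne0$ and $|\ell-\beta(t+j)|\le R$. We count them by ergodic averages. Set $\varphi(y)=\sum_j\1_{\{f_j\ne0\}}(y)$; then $\int\varphi\le|S|<1$ by \eqref{eq:layer-sums}.

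The obstacle is that $j$ is unbounded, so the pole location $\ell-\beta(t+j)$ is not $\ell$ up to $O(1)$. We truncate. Fix $J$ and split $\varphi=\varphi_J+\psi_J$, where $\varphi_J$ collects $|j|\le J$ and $\int\psi_J<\eta$. Poles with $|j|\le J$ lie within $|\beta|(J+1)$ of $\ell$. Lemma~\ref{lem:two-sided-ergodic} applied to $\varphi_J$ then bounds their count by $2R\int\varphi_J+o(R)$.

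For $|j|>J$ the pole satisfies $|\ell|\le R+|\beta|(1+|j|)$, but $|j|$ may be large relative to $R$. Here we use $|\beta|<\tfrac12$ and a weighted function: for $|\ell|\le 2R$ we count with $\psi_J$ as before. Pairs with $|\ell|>2R$ force $|\beta j|\gtrsim R$, and then $|j|\gtrsim R$. We control these by applying the ergodic theorem to $\sum_j |j|\1_{\{f_j\ne0\}}$. If that function is not integrable, we instead use a Borel--Cantelli style estimate: the expected number of such pairs is $\sum_j\sum_\ell|\{f_j\ne0\}|\,\1_{|\ell-\beta j|\lesssim R}$, which should be $o(R)$ after truncation.

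I expect this tail control to be the main obstacle; the fallback is to bound it with averaged counts plus Fatou. Letting $\eta\to0$ gives (v). Finally, the closing claim follows from (ii) and (iii): if $M_z(x)\equiv0$, every residue vanishes, so no poles exist, so $f_j(\{x-\ell\alpha\})=0$ for all $\ell,j$; taking $\ell=0$ gives $f_j(x)=0$.
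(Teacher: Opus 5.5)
\textbf{Verdict: there is a genuine gap in (v).} Items (i)--(iv) are essentially fine, apart from a smaller point noted below. Your proof of (i), which bounds $\sup_{a\in\R}\sum_\ell(1+(\ell-a)^2)^{-1}$ after substituting $t=\{x-\ell\alpha\}$, is a clean alternative to the paper's argument. Item (v) is exactly the input the Jensen argument in Theorem~\ref{thm:universal} needs, and it is not proved.

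\textbf{Why your tail control fails.} You count poles by $\ell$, evaluating $\sum_j\1_{\{f_j\ne0\}}$ at $\{x-\ell\alpha\}$. But $p_{\ell,j}$ lies near $\ell-\beta j$, not near $\ell$. This leaves the pairs with $|j|\gtrsim R$ and $|\ell|>2R$, and none of your proposed treatments controls them.
\begin{itemize}
\item The function $\sum_j|j|\1_{\{f_j\ne0\}}$ need not be integrable.
\item The expected number of these pairs is indeed $o(R)$. However, that only gives convergence to $0$ of the normalized count in $L^1(dx)$ for each $R$. Fatou then yields only a $\liminf$ statement. That is not (v), and it is not what Lemma~\ref{lem:jensen-density-comparison} consumes.
\item Upgrading to an a.e.\ statement via Markov and Borel--Cantelli along $R=2^m$ would require $\sum_m T(c\,2^m)<\infty$, where $T(s)=\sum_{|j|\ge s}|\{f_j\ne0\}|$. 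This is roughly $\sum_j\log(2+|j|)\,|\{f_j\ne0\}|<\infty$. It is not implied by $f\in L^1(S)$ with $|S|<1$; take $|\{f_j\ne0\}|\asymp(|j|\log^2|j|)^{-1}$.
\end{itemize}

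\textbf{The missing idea: reindex before invoking the ergodic theorem.} Write $\ell=\lfloor\beta j\rfloor+q$. Then $|p_{\ell,j}-q|<3/2$ and $\{x-\ell\alpha\}=\{\{x-q\alpha\}-\lfloor\beta j\rfloor\alpha\}$. So the number of poles with index $q$ is $\varphi(\{x-q\alpha\})$, where $\varphi(y)=\sum_j\1_{\{f_j\ne0\}}(\{y-\lfloor\beta j\rfloor\alpha\})$. Rotations preserve measure, so $\int_0^1\varphi=\sum_j|\{f_j\ne0\}|=|\{f\ne0\}|\le|S|$. A single application of Lemma~\ref{lem:two-sided-ergodic} then gives, for a.e.\ $x$,
$\#\{\text{poles in }|z|\le R\}\le\sum_{|q|\le R+2}\varphi(\{x-q\alpha\})=2R\,|\{f\ne0\}|+o(R)$.
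No truncation in $j$ is needed.

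\textbf{Smaller gap: discreteness of the poles.} Your claim that $|p_{\ell,j}|\to\infty$ along the support does not follow from $|p_{\ell,j}-\ell|\le|\beta|(1+|j|)$, because infinitely many pairs satisfy $|\ell-\beta j|\le R$. You need that only finitely many pairs with $f_j(\{x-\ell\alpha\})\ne0$ have $|p_{\ell,j}|\le R$. Without this, meromorphy and (ii)--(iii) are not justified. There are two easy fixes:
\begin{itemize}
\item It follows from $\varphi(\{x-q\alpha\})<\infty$ for all $q$, which holds a.e.
\item Alternatively, rerun your computation for (i) with $\1_{\{f_j\ne0\}}$ in place of $|f_j|$. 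This gives $\sum_{f_j(\{x-\ell\alpha\})\ne0}(1+p_{\ell,j}^2)^{-1}<\infty$ a.e.
\end{itemize}

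\textbf{Minor point.} $\sin(\pi\beta(t+j))$ vanishes for at most one $t$ because $\beta(t+j)$ ranges over an interval of length $|\beta|<1$. Rationality of $\beta$ plays no role there.
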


\begin{proof}
For fixed $j$ and $\ell$, we have that $a(x):=\beta(\{x-\ell\alpha\}+j)$ ranges over
an interval of length $|\beta|<1$, so it is an integer for at most one
$x\in[0,1)$. Ranging over $j$ and $\ell$, these form countably many values
of $x$. Discarding these values, the sine in \eqref{eq:M-poles} never vanishes,
so $r_{\ell,j}\ne0$ precisely when $f_j(\{x-\ell\alpha\})\ne0$, and no
$p_{\ell,j}$ is an integer.

\smallskip\noindent\textbf{Claim 1: Summability.}
We first show that $\sum_{j,\ell}|r_{\ell,j}|/(1+p_{\ell,j}^2)<\infty$
for almost every $x$.  The poles are close to integers:
$p_{\ell,j}=\ell-\beta(\{x-\ell\alpha\}+j)$ lies within $|\beta|<1/2$
of $\ell-\beta j$, hence within $3/2$ of the integer
$q_{\ell,j}:=\ell-\lfloor\beta j\rfloor$, so that
$1+p_{\ell,j}^2\asymp1+q_{\ell,j}^2$.  We now parametrize the pairs
$(j,\ell)$ by $(j,q)$ via $\ell=\lfloor\beta j\rfloor+q$. Since
$|r_{\ell,j}|\leq|f_j(\{x-\ell\alpha\})|/\pi$, it suffices to show
that
\[
 \sum_{j,q}\frac{|f_j(\{x-(\lfloor\beta j\rfloor+q)\alpha\})|}{1+q^2}
 <\infty
\]
for almost every $x$.  For later use, we will in fact show that
\begin{equation}\label{eq:summability}
 \sum_{j,q}
 \frac{|f_j(\{x-(\lfloor\beta j\rfloor+q)\alpha\})|
 +\1_{\{f_j\ne0\}}(\{x-(\lfloor\beta j\rfloor+q)\alpha\})}
 {1+q^2}<\infty
\end{equation}
for almost every $x$.  Since
$\int_0^1|f_j(\{x-m\alpha\})|\,dx=\|f_j\|_1$ and
$\int_0^1\1_{\{f_j\ne0\}}(\{x-m\alpha\})\,dx=|\{f_j\ne0\}|$ for every
$m\in\Z$, by Fubini's theorem the integral over $x\in[0,1)$ of the
left-hand side of \eqref{eq:summability} equals
\[
 \sum_q\frac1{1+q^2}\sum_j\bigl(\|f_j\|_1+|\{f_j\ne0\}|\bigr),
\]
which is finite by \eqref{eq:layer-sums}. Hence the left-hand side of
\eqref{eq:summability} is finite for almost every $x$.

\smallskip\noindent\textbf{Claim 2: Residues.} 
The indicator term in \eqref{eq:summability}, together with
$1+p_{\ell,j}^2\asymp1+q_{\ell,j}^2$, gives
\[
 \sum_{\substack{j,\ell\\r_{\ell,j}\ne0}}
 \frac{1}{1+p_{\ell,j}^2}<\infty.
\]
For each $R>0$, every summand with $|p_{\ell,j}|\le R$
is at least $(1+R^2)^{-1}$. Hence only finitely many
$p_{\ell,j}$ with $r_{\ell,j}\ne0$ lie in any bounded set.

If a compact set $K\subset\C$
contains none of them, then for $z\in K$ and all but finitely many of
them,
\[
 \left|\frac1{z-p_{\ell,j}}+\frac1{p_{\ell,j}}\right|
 =\frac{|z|}{|p_{\ell,j}|\,|z-p_{\ell,j}|}
 \leq\frac{C_K}{1+p_{\ell,j}^2},
\]
so by (i) the series \eqref{eq:M-definition} converges normally on
$K$: $z\mapsto M_z(x)$ is meromorphic, with poles among the
$p_{\ell,j}$ and with $r_{\ell,j}\ne0$.  These are pairwise distinct:
$p_{\ell,j}=p_{\ell',j'}$ gives
\[
 \{x-\ell\alpha\}-\{x-\ell'\alpha\}
 =\frac{\ell-\ell'}{\beta}-(j-j')\in\Q,
\]
while the left-hand side differs from $-(\ell-\ell')\alpha$ by an
integer, $\alpha$ being irrational, $\ell=\ell'$, and then $j=j'$.
Hence, near each $p_{\ell,j}$ with $r_{\ell,j}\ne0$, all terms of
\eqref{eq:M-definition} except for one are holomorphic, and $M_z(x)$ has a
simple pole with residue $r_{\ell,j}$ there.  Together with claim (i), and the fact that $p_{\ell,j}$ is real, we obtain (ii) and (iii).

\smallskip\noindent\textbf{Claim 3: Zeros.}
The series converges at every integer (none of which is a pole). Moreover, at every integer it coincides with \eqref{eq:Mk-definition}, by
Lemma~\ref{lem:M-fourier}. Discarding one more null set, we have $M_k(x)=0$
for every $k\in\Z$.  If $M_z(x)$ is not identically zero, its zeros contain $\Z$, giving
\eqref{eq:M-integer-zeros}.

\smallskip\noindent\textbf{Claim 4: Density of the poles.}
By (ii), the poles are the $p_{\ell,j}$ with $f_j(\{x-\ell\alpha\})\ne0$.
Write $\ell=\lfloor\beta j\rfloor+q$ as in (i): each pole $p_{\ell,j}$
lies within distance $3/2$ of $q_{\ell,j}$, and the number of poles $p_{\ell,j}$
with $q_{\ell,j}=q$ is $\varphi(\{x-q\alpha\})$, where
\[
 \varphi(y):=\sum_j\1_{\{f_j\ne0\}}\bigl(\{y-\lfloor\beta j\rfloor\alpha\}\bigr).
\]
Hence
\[
 \#\{\text{poles in }|z|\leq R\}\leq\sum_{|q|\leq R+2}\varphi(\{x-q\alpha\}).
\]
Now observe that by \eqref{eq:layer-sums} we have
$$
\int_0^1\varphi
=\sum_j\int_0^1\1_{\{f_j\ne0\}}(\{y-\lfloor\beta j\rfloor\alpha\})\,dy
=\sum_j|\{f_j\ne0\}|=|\{f\ne0\}|.
$$
Thus,
$\varphi\in L^1(0,1)$ and Lemma~\ref{lem:two-sided-ergodic} gives,
\[
 \limsup_{R\to\infty}\frac{\#\{\text{poles in }|z|\leq R\}}{2R}
 \leq\limsup_{R\to\infty}\frac1{2R}\sum_{|q|\leq R+2}\varphi(\{x-q\alpha\})
 =|\{f\ne0\}|\leq|S|,
\]
for almost every $x$. This yields \eqref{eq:M-pole-density}.

\smallskip\noindent\textbf{Claim 5: Vanishing.}
If $z\mapsto M_z(x)$ vanishes identically, then it has no poles, so $f_j(\{x-\ell\alpha\})=0$ for every $j$ and $\ell$ by (ii). The case $\ell=0$ gives $f_j(x)=0$ for every $j$. This completes the proof of the statement.
\end{proof}

\subsection{Proof of Theorem~\ref{thm:universal}}
With the help of the previous sections, we are ready to provide a proof of Theorem \ref{thm:universal}

\begin{proof}[Proof of Theorem~\ref{thm:universal}]
First we observe that for every $n\ne m$ we have
\[
 |\lambda_n-\lambda_m|\geq1-|\beta|>\frac12,
\]
so $\Lambda$ is uniformly discrete.  Clearly, $\Lambda$ is also uniformly distributed with $D(\Lambda)=1$.

Let $f$ satisfy \eqref{eq:fourier-zeros}, and construct $M_z(x)$ as
in the previous section. Fix $x$ in a full measure set for which
Lemmas~\ref{lem:M-fourier} and \ref{lem:M-meromorphic} apply, and
suppose for contradiction that $z\mapsto M_z(x)$ is not identically zero.

We first show that
\[
 |M_{Re^{i\theta}}(x)|\leq C_xR\bigl(1+|\sin\theta|^{-1}\bigr),
 \quad R\geq1,
\]
which, by the integrability of $\log(1+|\sin\theta|^{-1})$, gives
\begin{equation}\label{eq:M-log-growth}
 \int_0^{2\pi}\log|M_{Re^{i\theta}}(x)|\,d\theta\leq C_x\log R,
 \quad R\geq2.
\end{equation}
By Lemma~\ref{lem:M-meromorphic}(ii), the poles are locally finite
and avoid zero. Hence their absolute values are bounded below by a
positive constant depending on $x$. If $p$ is a pole and $r$ its
residue, the term of \eqref{eq:M-definition}
corresponding to $p$ satisfies for $z=Re^{i\theta}$ the identity
\[
 \left|r\left(\frac1{z-p}+\frac1p\right)\right|
 =\frac{|r|R}{|p|\,|z-p|}.
\]
For $|p|>2R$ we have $|z-p|\geq|p|-R\geq|p|/2$.  Otherwise, we use
that $|z-p|\geq R|\sin\theta|$ since $p$ is real, and that
$(1+p^2)/|p|\leq C_xR$ by $|p|\leq2R$ and the poles being bounded away
from zero.  Hence
\[
 \left|r\left(\frac1{z-p}+\frac1p\right)\right|
 \leq
 \begin{cases}
  \dfrac{2R\,|r|}{p^2}
  \leq\dfrac{4R\,|r|}{1+p^2}
  & |p|>2R,\\[3mm]
  \dfrac{|r|}{|p|\,|\sin\theta|}
  \leq\dfrac{C_xR\,|r|}{(1+p^2)\,|\sin\theta|}
  & |p|\leq2R.
 \end{cases}
\]
Summing over the poles and using (i) of
Lemma~\ref{lem:M-meromorphic} gives the bound.

By \eqref{eq:M-log-growth}, Lemma~\ref{lem:jensen-density-comparison}
applies to $z\mapsto M_z(x)$, and together with
\eqref{eq:M-integer-zeros} and \eqref{eq:M-pole-density} it gives
\[
 1\leq
 \liminf_{R\to\infty}
 \frac{\#\{\text{zeros in }|z|\leq R\}}{2R}
 \leq
 \limsup_{R\to\infty}
 \frac{\#\{\text{poles in }|z|\leq R\}}{2R}
 \leq|S|<1.
\]
This gives a contradiction and therefore $M_z(x)$ vanishes identically. The last
statement of Lemma~\ref{lem:M-meromorphic} gives $f_j(x)=0$ for every
$j$, and $f=0$ almost everywhere.
\end{proof}

\section{Noncompleteness of asymptotically integer frequencies}
\label{sec:asymptotic-integer}

In this section we prove Theorem~\ref{thm:null}. We start with two reductions. First, we remove the integer frequencies: if we can find a
nonzero \(G\in L^2(\R)\), with arbitrarily small support, and with
\(\hat G(\lambda_n)=0\) whenever \(\lambda_n\notin\Z\), then the function
\(F\) defined by
\[
    \hat F(x)=(1-e^{2\pi ix})\hat G(x)
\]
has \(\hat F(\lambda_n)=0\) for every \(n\in\Z\). Moreover, \(F\) is the
difference of two integer translates of \(G\), so
\(|\supp (F)|\leq2|\supp (G)|\). Thus, we can assume that $\lambda_n\notin\Z$ for every $n$.

Secondly, we remove the frequencies with  \(|\delta_n|\geq1/8\). Since \(\delta_n\to0\), there are only finitely many such frequencies. Choose a nonzero polynomial \(A : \C \to \C \)
which vanishes at the points \(e^{2\pi i\lambda_n}\). If we can find a
nonzero \(G\in L^2(\R)\), with arbitrarily small support, and with
\(\hat G(\lambda_n)=0\) whenever \(|\delta_n|\geq 1/8\), then the function 
\[
    \hat F(x)=A(e^{2\pi ix})\hat G(x),
\]
vanishes at every frequency $\lambda_n$. Moreover, \(F\) is a finite linear combination of a fixed number of integer translates of \(G\). Hence, by taking the support of \(G\) sufficiently small, we obtain \(|\supp (F)|<\varepsilon\) and therefore the desired conclusion.

We have therefore reduced to proving the following statement: \textit{given \(\varepsilon>0\) and \(\lambda_n=n+\delta_n\), with \(\delta_n\to 0\), and assuming additionally that \(0<|\delta_n|<\frac18\), then there is \(F\in L^2(\R)\setminus\{0\}\) with \(|\supp (F)|<\varepsilon\) and \(\hat{F}(\lambda_n) = 0\) for every \(n\in\Z\).
}

\subsection{A preliminary interpolation lemma}
\begin{lemma}
\label{lem:asymp-thin-interpolation}
Let 
\(I\subseteq\Z\) finite, and let
\((v_k)_{k\in I} \subset\C\). Given \(0<\varepsilon\leq1\), there exists
\(f\in L^2(\R)\), with $\supp (f)\subset [0,1]$, $|\supp (f)|\leq \varepsilon$, and $\hat f(\lambda_k)=v_k$ for $k\in I$.
Moreover,
$$\quad
    \norm{f}_2^2\leq\frac{C}{\varepsilon}\sum_{k\in I}|v_k|^2.$$
\end{lemma}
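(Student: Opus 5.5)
The plan is to build $f$ as a minimal-norm interpolant supported on a thin, evenly spread set $E\subset[0,1]$ with $|E|=\varepsilon$, and to control the Gram matrix of the relevant exponentials on $E$ from below by $\varepsilon$ times an absolute Riesz constant coming from the full interval $[0,1]$.

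\emph{Step 1 (Riesz bound on $[0,1]$).} We use the reduction $0<|\delta_n|<\frac18<\frac14$. Kadec's $\frac14$-theorem then shows that $\{e^{2\pi i\lambda_n t}\}_{n\in\Z}$ is a Riesz basis of $L^2(0,1)$, with bounds depending only on the uniform bound $\frac18$ on $|\delta_n|$. In particular there is an absolute $A>0$ such that for every finite $I$ and coefficients $(c_k)$,
\[
\int_0^1\Bigl|\sum_{k\in I}c_ke^{2\pi i\lambda_kt}\Bigr|^2dt\geq A\sum_{k\in I}|c_k|^2 .
\]

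\emph{Step 2 (thin comb set).} For $N\in\N$ put $E_N=\bigcup_{m=0}^{N-1}\bigl[\tfrac mN,\tfrac{m+\varepsilon}N\bigr]\subset[0,1]$, so that $|E_N|=\varepsilon$. For a trigonometric polynomial $P=\sum_{k\in I}c_ke^{2\pi i\lambda_kt}$, the substitution $t=(m+\varepsilon s)/N$ gives
\[
\int_{E_N}|P|^2=\varepsilon\int_0^1\frac1N\sum_{m=0}^{N-1}\Bigl|P\Bigl(\tfrac{m+\varepsilon s}N\Bigr)\Bigr|^2ds .
\]
The inner sum is a Riemann sum for $\int_0^1|P|^2$. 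Since $I$ is finite, the family $\{|P|^2:\sum|c_k|^2=1\}$ is uniformly equicontinuous on $[0,1]$. Hence for $N$ large (depending on $I$ and $\varepsilon$) the Riemann sum is at least $\frac12\int_0^1|P|^2$ uniformly in $s$ and in normalized $c$. Combined with Step 1, this gives
\[
\int_{E_N}|P|^2\geq\frac{A\varepsilon}2\sum|c_k|^2 .
\]
I expect this to be the main obstacle. The point is that the lower constant $A$ must not depend on $I$, and only the choice of $N$ may depend on $I$; the equicontinuity argument gives exactly this.

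\emph{Step 3 (minimal-norm interpolation).} Let $G_{kl}=\langle e_l,e_k\rangle_{L^2(E_N)}$ with $e_k(t)=e^{2\pi i\lambda_kt}$. By Step 2, $G\geq\frac{A\varepsilon}2\,\mathrm{Id}$, so $G$ is invertible. Set $f=\sum_{l}c_le_l\1_{E_N}$ and choose $c$ so that $\hat f(\lambda_k)=\langle f,e_k\rangle_{L^2(E_N)}=(Gc)_k=v_k$, i.e.\ $c=G^{-1}v$. Then $\supp(f)\subset E_N\subset[0,1]$, $|\supp(f)|\leq\varepsilon$, and
\[
\|f\|_2^2=\langle Gc,c\rangle=\langle v,G^{-1}v\rangle\leq\frac{2}{A\varepsilon}\sum_{k\in I}|v_k|^2 .
\]
This is the claim with $C=2/A$, a constant independent of $I$ and $\varepsilon$.
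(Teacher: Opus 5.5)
Your proof is correct. It differs from the paper's only in how the thin set is produced.

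\textbf{The paper's route.} It invokes Lyapunov's convexity theorem to obtain a measurable $\Omega\subset(0,1)$ with $|\Omega|=\varepsilon$ on which the Gram matrix of $\{e^{2\pi i\lambda_k t}\}_{k\in I}$ is \emph{exactly} $\varepsilon$ times the Gram matrix on $[0,1]$. The Kadec lower bound therefore transfers as $c\varepsilon$. It then shrinks $\Omega$ to a compact $K$ with $|\Omega\setminus K|<c\varepsilon/(2\#I)$, which guarantees that even the closed support of $f$ has measure at most $\varepsilon$. This costs a factor $2$ through an entrywise perturbation bound on the Gram matrix.

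\textbf{Your route.} The explicit comb $E_N$ replaces Lyapunov's theorem by an elementary Riemann-sum argument. Equivalently, $\1_{E_N}\to\varepsilon$ weak-$*$, which suffices because only finitely many exponentials are involved. You get only approximate proportionality, but with the same factor $2$ and the same final constant $2/A$. Since $E_N$ is a finite union of closed intervals, the support issue that the compact $K$ handles in the paper never arises.

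\textbf{Trade-off.} Your version is more explicit and self-contained but needs the Lipschitz/equicontinuity estimate; the paper's is shorter once Lyapunov's theorem is available. In both, the auxiliary object ($N$ or $K$) depends on $I$. This is harmless because the Riesz constant from Kadec's theorem does not.
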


\begin{proof}
If $I=\varnothing$, take $f=0$. We therefore assume in the following that $I\neq\varnothing$.
By Lyapunov's convexity theorem \cite{Liapounoff1940,Barvinok2002} there is a measurable $\Omega\subset(0,1)$ with
$|\Omega|=\varepsilon$ such that
\begin{equation}\label{eq:782tg8uhwboi}
    \int_\Omega e^{2\pi i(\lambda_m-\lambda_n)t}\,dt
    =\varepsilon\int_0^1
    e^{2\pi i(\lambda_m-\lambda_n)t}\,dt,
    \quad n,m\in I.
\end{equation}

Since $\sup_n|\lambda_n-n|\leq1/8<1/4$, Kadec's $1/4$ theorem \cite{Kadets1964,Young2001} yields universal
constants $c,C>0$ such that, for every $(a_k)_{k\in I}\subset\C$,
\begin{equation}\label{eq:asymp-kadec}
    c\sum_{k\in I}|a_k|^2
    \leq
    \int_0^1\left|\sum_{k\in I}a_ke^{2\pi i\lambda_k t}\right|^2dt
    \leq C\sum_{k\in I}|a_k|^2.
\end{equation}
The corresponding Bessel bound gives, for every $h\in L^2(0,1)$
extended by zero outside $(0,1)$, the inequality
\begin{equation}\label{eq:asymp-bessel}
    \sum_{n\in\Z}|\widehat h(\lambda_n)|^2
    \leq C\|h\|_2^2.
\end{equation}
Now choose a compact set $K\subset\Omega$ with
$
    |\Omega\setminus K|<\frac{c\varepsilon}{2\# I}
$
and define the Hermitian matrix $(G_{nm})_{n,m \in I}$ via
\[
    G_{nm}:=\int_K e^{2\pi i(\lambda_m-\lambda_n)t}\,dt,
    \quad n,m\in I.
\]
It is well-known that $G$ is invertible.
Set $a=G^{-1}v$ and define
\[
    f(t):=\1_K(t)\sum_{k\in I}a_ke^{2\pi i\lambda_k t}.
\]
The definition of $f$ gives $|\supp(f)|\leq|K|\leq\varepsilon$, and
\[
    \widehat f(\lambda_n)=(Ga)_n=v_n.
    \quad n\in I,
\]
Finally, we have
\[
    \|f\|_2^2=a^*Ga=v^*G^{-1}v
    \leq\frac{2}{c\varepsilon}\sum_{k\in I}|v_k|^2,
\]
which proves the required estimate after adjusting the universal constant.
\end{proof}

\subsection{Construction of the counterexample}

The proof is based on an iterative correction procedure. We divide the frequencies into dyadic blocks \(B_j\), according to the size of \(|\delta_n|\), and eliminate one block at a time. At stage \(j\), we add a correction \(g_j\) which creates zeros on \(B_j\) while preserving all zeros obtained at earlier stages. A direct application of the interpolation lemma would not give sufficient control over the effect of \(g_j\) on later blocks. We therefore take \(g_j\) to be the difference of two translates of an interpolating function \(h_j\), so that
\[
    \widehat g_j(x)
    =\bigl(e^{2\pi i2^{j-1}x}-1\bigr)\widehat h_j(x).
\]
The multiplier is bounded away from zero on \(B_j\), allowing us to cancel the current Fourier values there. We also arrange that \(\widehat g_j\) vanishes on the nearby blocks, while on much later blocks the multiplier is small because \(\delta_n\to0\). This decay
makes the corrections square summable.

We begin with a nonzero smooth function whose support has measure less than \(\varepsilon/2\) and whose Fourier transform vanishes on finitely many initial blocks. We choose the subsequent corrections \(g_j\) with pairwise disjoint supports, also disjoint from the initial support, and require
\[
    |\supp (g_j)|\leq\varepsilon_j,
    \quad
    \sum_{j\geq J}\varepsilon_j<\frac{\varepsilon}{2},
\]
where \(J\) is the first correction stage. Thus the union of the
initial support and all correction supports has measure less than
\(\varepsilon\). The resulting \(L^2\)-limit is nonzero and its
Fourier transform vanishes at every \(\lambda_n\).

\begin{proof}[Proof of Theorem~\ref{thm:null}]
Fix \(\varepsilon>0\). In view of the preceding reductions, we may
assume that
\[
    0<|\delta_n|<\frac18,\quad n\in\Z.
\]
We divide the construction into three steps.

\medskip
\noindent\textbf{Step 1: Adding zeros on a fixed dyadic block.}
For \(j\geq3\), let
\[
    B_j=\{n\in\Z:2^{-j-1}\leq|\delta_n|<2^{-j}\}.
\]
Each \(B_j\) is finite, since \(\delta_n\to0\), and the blocks
\((B_j)_{j\geq3}\) form a partition of \(\Z\).

For \(j\geq3\), define
\[
    \varepsilon_j
    =\frac{\varepsilon}{4}\,
    \frac{
        2^{-j}+\displaystyle\max_{n\in B_j}\frac1{(1+|n|)^2}
    }{
        \displaystyle\sum_{r\geq3}2^{-r}
        +\displaystyle\sum_{n\in\Z}\frac1{(1+|n|)^2}
    },
\]
where the maximum is understood to be zero when \(B_j=\varnothing\). These numbers are chosen so that, for some constant
\(C_\varepsilon>0\),
\begin{equation}\label{eq:asymp-support-budget}
    \sum_{j\geq3}\varepsilon_j<\frac{\varepsilon}{2},
    \quad
    \frac1{\varepsilon_j}\leq C_\varepsilon 2^j,
    \quad
    \frac1{\varepsilon_j}
    \leq C_\varepsilon(1+|n|)^2,
    \quad n\in B_j.
\end{equation}

Let \(j\geq3\), and suppose that \(F\in L^2(\R)\) has bounded support and satisfies
\[
    \widehat F(\lambda_n)=0,
    \quad n\in B_3\cup\cdots\cup B_{j-1}.
\]
We construct a correction \(g_j\) which adds zeros on \(B_j\), preserves all the preceding zeros, and does not affect the blocks \(B_{j+1},\ldots,B_{4j}\). If \(B_j=\varnothing\), we simply take \(g_j=0\).

For \(n\in B_j\), we have
\[
    \frac14\leq2^{j-1}|\delta_n|<\frac12.
\]
Since \(\lambda_n=n+\delta_n\), it follows that
\begin{equation}\label{eq:asymp-dyadic-multiplier}
\begin{aligned}
    c
    &\leq
    \left|e^{2\pi i2^{j-1}\lambda_n}-1\right|
    \leq C,
    && n\in B_j,\\
    \left|e^{2\pi i2^{j-1}\lambda_n}-1\right|
    &\leq C2^j|\delta_n|,
    && n\in\Z.
\end{aligned}
\end{equation}
The first estimate allows us to divide by the multiplier on \(B_j\).
The second will control the effect of the correction on later blocks.

Choose an integer \(M_j\) sufficiently large that both intervals
\[
    [M_j,M_j+1]
    \quad\text{and}\quad
    [M_j-2^{j-1},M_j-2^{j-1}+1]
\]
are disjoint from \(\supp (F)\). Since \(j\ge3\), these intervals
are also disjoint from each other.
Applying Lemma~\ref{lem:asymp-thin-interpolation}, followed by a
translation to \([M_j,M_j+1]\), we obtain a function \(h\), supported
on a set of measure at most \(\varepsilon_j/2\), such that
\[
    \widehat h(\lambda_n)=
    \begin{cases}
        -\widehat F(\lambda_n)/
        \bigl(e^{2\pi i2^{j-1}\lambda_n}-1\bigr),
        &n\in B_j,\\[1mm]
        0,
        &n\in(B_3\cup\cdots\cup B_{4j})\setminus B_j,
    \end{cases}
\]
and
\[
    \norm{h}_2^2
    \leq\frac{C}{\varepsilon_j}
    \sum_{n\in B_j}|\widehat F(\lambda_n)|^2.
\]
Here the prescribed values are modified by the appropriate unimodular
factors before translating, so that the displayed identities hold for
the translated function.

Now consider
\[
    g_j(t)=h(t+2^{j-1})-h(t),
\]
which satisfies
\[
    \widehat g_j(x)
    =\bigl(e^{2\pi i2^{j-1}x}-1\bigr)\widehat h(x).
\]
Consequently, \(\widehat{F+g_j}\) vanishes on
\(B_3\cup\cdots\cup B_j\), while \(\widehat g_j\) vanishes on
\(B_{j+1}\cup\cdots\cup B_{4j}\). Since the two translates of \(h\)
have disjoint supports,
\begin{equation}\label{eq:asymp-one-block}
    |\supp (g_j)|\leq\varepsilon_j,
    \quad
    \norm{g_j}_2^2
    \leq\frac{C}{\varepsilon_j}
    \sum_{n\in B_j}|\widehat F(\lambda_n)|^2.
\end{equation}

It remains to estimate the effect of \(g_j\) on the more distant
blocks. If \(k>4j\) and \(n\in B_k\), then
\eqref{eq:asymp-support-budget} and
\eqref{eq:asymp-dyadic-multiplier} give
\[
    \frac{
        |e^{2\pi i2^{j-1}\lambda_n}-1|^2
    }{\varepsilon_k}
    \leq C_\varepsilon4^{-j}.
\]
Using the Bessel estimate \eqref{eq:asymp-bessel}, after translating
\(h\) back to an interval of length one, we obtain
\begin{equation}\label{eq:asymp-far-blocks}
\begin{aligned}
    \sum_{k>4j}\frac1{\varepsilon_k}
    \sum_{n\in B_k}|\widehat g_j(\lambda_n)|^2
    &=
    \sum_{k>4j}\frac1{\varepsilon_k}
    \sum_{n\in B_k}
    |e^{2\pi i2^{j-1}\lambda_n}-1|^2
    |\widehat h(\lambda_n)|^2\\
    &\leq
    C_\varepsilon4^{-j}
    \sum_{k>4j}\sum_{n\in B_k}
    |\widehat h(\lambda_n)|^2\\
    &\leq
    \frac{C_\varepsilon4^{-j}}{\varepsilon_j}
    \sum_{n\in B_j}|\widehat F(\lambda_n)|^2.
\end{aligned}
\end{equation}
This ensures that the corrections are square summable.
\medskip

\noindent\textbf{Step 2: Corrections on every block.}
Choose \(J\geq3\) so large that
\[
    C_\varepsilon^2\sum_{j\geq J}4^{-j}\leq\frac14,
\]
where \(C_\varepsilon\) is large enough for the estimates above. Take
a nonzero function \(\psi\in C_c^\infty(\R)\) with
\[
    |\supp(\psi)|<\frac{\varepsilon}{2},
\]
and define
\[
    F_{J-1}
    =
    \prod_{n\in B_3\cup\cdots\cup B_{J-1}}
    \left(\frac1{2\pi i}\frac{d}{dt}-\lambda_n\right)\psi.
\]
Then \(F_{J-1}\neq0\), its support is contained in \(\supp(\psi)\), and
its Fourier transform vanishes on
\(B_3\cup\cdots\cup B_{J-1}\). Because \(\psi\) is smooth, these
finitely many zeros can be introduced by differentiation without
enlarging its support. The later corrections are only \(L^2\), which
is why we use the interpolation construction from Step~1 for the
remaining blocks.

Since \(\widehat F_{J-1}\) is rapidly decreasing, the last estimate in
\eqref{eq:asymp-support-budget} gives
\[
    \sum_{j\geq J}\frac1{\varepsilon_j}
    \sum_{n\in B_j}
    |\widehat F_{J-1}(\lambda_n)|^2<\infty.
\]
Multiplying \(F_{J-1}\) by a nonzero constant, we may assume that
\begin{equation}\label{eq:asymp-initial-energy}
    \sum_{j\geq J}\frac1{\varepsilon_j}
    \sum_{n\in B_j}
    |\widehat F_{J-1}(\lambda_n)|^2\leq1.
\end{equation}

We now proceed through the blocks \(B_J,B_{J+1},\ldots\). At stage
\(j\), apply Step~1 to \(F_{j-1}\), place the correction \(g_j\) on a
part of the real line disjoint from all the preceding supports, and set
\[
    F_j=F_{j-1}+g_j.
\]
Thus each step adds one new block of zeros, preserves all preceding
zeros, and increases the measure of the support by at most
\(\varepsilon_j\).

We next verify that the corrections are square summable. For \(L\geq J\),
set
\[
    A_L=
    \left(
        \sum_{j=J}^{L}\frac1{\varepsilon_j}
        \sum_{n\in B_j}
        |\widehat F_{j-1}(\lambda_n)|^2
    \right)^{1/2}.
\]
For \(t<j\), the correction \(g_t\) vanishes on \(B_j\) whenever
\(j\leq4t\). Thus only corrections with \(j>4t\) contribute to the
values on \(B_j\). Minkowski's inequality,
\eqref{eq:asymp-initial-energy}, and
\eqref{eq:asymp-far-blocks} therefore give
\[
    A_L
    \leq
    1+
    \sum_{t=J}^{L-1}
    \frac{C_\varepsilon2^{-t}}{\sqrt{\varepsilon_t}}
    \left(
        \sum_{n\in B_t}
        |\widehat F_{t-1}(\lambda_n)|^2
    \right)^{1/2}.
\]
By Cauchy-Schwarz and the choice of \(J\),
\[
    A_L
    \leq
    1+
    C_\varepsilon
    \left(\sum_{t\geq J}4^{-t}\right)^{1/2}A_L
    \leq1+\frac12A_L.
\]
Hence \(A_L\leq2\), uniformly in \(L\), and consequently
\[
    \sum_{j\geq J}\frac1{\varepsilon_j}
    \sum_{n\in B_j}
    |\widehat F_{j-1}(\lambda_n)|^2
    \leq4.
\]
It now follows from \eqref{eq:asymp-one-block} that
\[
    \sum_{j\geq J}\norm{g_j}_2^2<\infty.
\]

\medskip
\noindent\textbf{Step 3: Passing to the limit.}
The supports of the corrections were chosen pairwise disjoint and
disjoint from \(\supp (F_{J-1})\). Therefore, we have that
\[
    F=F_{J-1}+\sum_{j\geq J}g_j
\]
converges in \(L^2(\R)\). Since \(F\) agrees with
the nonzero function \(F_{J-1}\) on \(\supp (F_{J-1})\), it follows that $F \neq 0$.

In addition, we observe that
\[
    \sum_{j\geq J}\norm{g_j}_1
    \leq
    \left(\sum_{j\geq J}|\supp (g_j)|\right)^{1/2}
    \left(\sum_{j\geq J}\norm{g_j}_2^2\right)^{1/2}
    <\infty.
\]
It follows that \(\widehat F_j\) converges uniformly to \(\widehat F\).
At stage \(j\), the Fourier transform of \(F_j\) vanishes on
\(B_3\cup\cdots\cup B_j\), and every subsequent correction preserves
these zeros. Since the blocks \(B_j\), where \(j\geq3\), cover \(\Z\), we
conclude that for all $n \in \Z$ it holds that $\widehat F(\lambda_n)=0$.

Finally,
\[
    |\supp (F)|
    \leq
    |\supp (F_{J-1})|+\sum_{j\geq J}\varepsilon_j
    <\varepsilon.
\]
Taking \(S=\supp (F)\), the nonzero function \(F\in L^2(S)\) is
orthogonal to \(E(\Lambda)\), and the theorem follows.
\end{proof}

\section{Universal completeness of integer frequencies on subsets of the unit interval}\label{sec:integer}
This section proves Theorem~\ref{thm:integer}. Fix an $\alpha\in\R\setminus\Q$ and $v\in[0,1]$. Recall that we define
\begin{equation}\label{eq:construction}
 \Lambda_v=\{n\in\Z:\{n\alpha\}\in[1-v,1)\} .
\end{equation}
For $v=0$, Theorem \ref{thm:integer} is trivial. For $v=1$ we have $\Lambda_v=\Z$, and
Theorem \ref{thm:integer} reduces to the classical fact that a function in $L^1(0,1)$ whose
Fourier coefficients all vanish is zero almost everywhere.  Hence we
assume $v\in(0,1)$ from now on.

We will use the following consequence of Jensen's formula, see
\cite[Lectures~2-3]{Levin1996}.

\begin{proposition}\label{prop:zero-density}
    Let $I\subseteq[0,1]$ be an interval and let $g\in L^1(I)$, extended by
zero to $[0,1]$.  If
\[
 \liminf_{N\to\infty}
 \frac{\#\{k\in\Z:|k|\leq N,\ \widehat g(k)=0\}}{2N+1}>|I|,
\]
then $g=0$ almost everywhere
\end{proposition}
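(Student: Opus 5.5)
Translate so that $I=[a,a+|I|]$, and consider the entire function
\[
 G(z)=\int_I g(t)e^{-2\pi i z t}\,dt ,
\]
which satisfies $G(k)=\widehat g(k)$ for $k\in\Z$. Multiplying by $e^{2\pi i z(a+|I|/2)}$ centres the support at the origin without changing the zero set. We may therefore assume $I=[-h,h]$ with $2h=|I|$. The resulting entire function $G$ is of exponential type at most $2\pi h=\pi|I|$, and it is bounded on $\R$ by $\|g\|_1$. The plan is to show that if $g\neq0$, then the zeros of $G$ have density at most $|I|$, which contradicts the hypothesis.

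The key growth estimate is
\[
 |G(x+iy)|\le \|g\|_1 e^{2\pi h|y|}.
\]
Hence
\[
 \frac1{2\pi}\int_0^{2\pi}\log|G(Re^{i\theta})|\,d\theta\le \log\|g\|_1+\frac{2\pi hR}{2\pi}\int_0^{2\pi}|\sin\theta|\,d\theta=\log\|g\|_1+4hR .
\]
Suppose $g\ne0$. Then $G\not\equiv0$ by injectivity of the Fourier transform on $L^1$. Divide out a possible zero of $G$ at the origin; this changes the counts by $O(1)$ and the log-integral by $O(\log R)$. Jensen's formula then gives
\[
 \int_0^R\frac{n(t)}{t}\,dt\le 4hR+O(\log R),
\]
where $n(t)$ counts the zeros of $G$ in $|z|\le t$.

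Now use the hypothesis. There are $\sigma>|I|=2h$ and $N_0$ such that $G$ has at least $\sigma(2N+1)$ zeros among the integers in $[-N,N]$ for every $N\ge N_0$. Hence $n(t)\ge\sigma(2t-1)$ for large $t$. Integrating gives
\[
 \int_0^R\frac{n(t)}{t}\,dt\ge 2\sigma R-O(\log R).
\]
Comparing with the Jensen bound yields $2\sigma\le4h=2|I|$, that is, $\sigma\le|I|$, which is a contradiction. Therefore $g=0$ almost everywhere. (Equivalently, one could apply Lemma~\ref{lem:jensen-density-comparison} to the meromorphic function $G(z)/\sin(\pi\sigma' z)$ for suitable $\sigma'$, but the direct Jensen computation is cleaner.)

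The main point to get right is the constant: the averaged indicator $\frac1{2\pi}\int_0^{2\pi}2\pi h|\sin\theta|\,d\theta=4h$ must match exactly the zero density $2\sigma$ coming from integer zeros counted in a disk of radius $R$. The first thing to check is that this normalization yields the sharp threshold $|I|$ rather than, say, $2|I|$ or $\frac{2}{\pi}|I|$. Since $\frac{1}{2\pi}\int|\sin|=\frac{2}{\pi}$ and $2\pi h\cdot\frac{2}{\pi}=4h=2|I|$, the threshold is indeed sharp. The remaining steps, namely the handling of a zero at the origin and of radii meeting zeros, are routine and treated exactly as in the proof of Lemma~\ref{lem:jensen-density-comparison}.
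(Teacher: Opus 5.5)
Your proof is correct and is essentially the approach the paper intends: the paper gives no argument of its own, only citing Levin for this "consequence of Jensen's formula," and your direct Jensen computation (in the same spirit as Lemma~\ref{lem:jensen-density-comparison}) supplies exactly that. Your normalization is also right: $\frac{1}{2\pi}\int_0^{2\pi}\log|G(Re^{i\theta})|\,d\theta\le 2|I|R+O(1)$ is compared against $\int_0^R n(t)\,t^{-1}\,dt\ge 2\sigma R-O(\log R)$ coming from the integer zeros, so the threshold comes out as exactly $|I|$.
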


\subsection{Density of $\Lambda_v$}
We need the following well-known preliminary lemma:

\begin{lemma}
\label{lem:uniform-interval}
Let $\alpha\notin\Q$ and let $J\subseteq[0,1]$ be an interval.  Then
\[
 \lim_{N\to\infty}\ \sup_{x\in[0,1]}
 \left|
  \frac1N\sum_{k=0}^{N-1}\1_J(\{x+k\alpha\})-|J|
 \right|=0.
\]
\end{lemma}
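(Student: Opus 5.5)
This is the uniform version of Weyl's equidistribution theorem for the rotation $x\mapsto x+\alpha$. I would prove it by sandwiching $\1_J$ between trigonometric polynomials and then applying Weyl's exponential-sum bound.

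First I treat trigonometric polynomials. For $m\neq0$,
\[
 \left|\frac1N\sum_{k=0}^{N-1}e^{2\pi i m(x+k\alpha)}\right|
 =\frac1N\left|\frac{e^{2\pi i mN\alpha}-1}{e^{2\pi i m\alpha}-1}\right|
 \leq\frac{2}{N\,|e^{2\pi i m\alpha}-1|}.
\]
The denominator is nonzero since $\alpha\notin\Q$, and the bound does not depend on $x$. Hence, for every trigonometric polynomial $P$, the averages $\frac1N\sum_{k<N}P(\{x+k\alpha\})$ converge to $\int_0^1P$ uniformly in $x$.

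Next, given $\varepsilon>0$, I choose continuous $1$-periodic functions $g_-\leq\1_J\leq g_+$ on $\T$ with $\int(g_+-g_-)<\varepsilon$. This can be done by piecewise-linear trapezoids slightly inside and slightly outside $J$; if $|J|<\varepsilon$ one simply takes $g_-=0$, and endpoints at $0$ or $1$ cause no trouble on $\T$. By Fejér's (or Weierstrass') theorem, I then pick trigonometric polynomials $P_\pm$ with $\|P_\pm-g_\pm\|_\infty<\varepsilon$. Then $P_--\varepsilon\leq\1_J\leq P_++\varepsilon$ pointwise, and $\int P_+-\int P_-<3\varepsilon$.

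Averaging these inequalities along the orbit and using the uniform convergence from the first step, I get, for $N$ large and uniformly in $x$,
\[
 |J|-5\varepsilon\leq\frac1N\sum_{k<N}\1_J(\{x+k\alpha\})\leq|J|+5\varepsilon.
\]
Letting $\varepsilon\to0$ gives the claim.

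The only delicate point is that uniformity in $x$ must survive the passage from continuous functions to the discontinuous indicator. The two-sided sandwich handles this, because it uses pointwise inequalities rather than $L^1$ approximation.
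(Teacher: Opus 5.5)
Your proof is correct. The paper gives no proof of this lemma (it is quoted as well known), and your argument is the standard proof of uniform equidistribution for the irrational rotation. The geometric-sum bound $2/(N|e^{2\pi i m\alpha}-1|)$ is independent of $x$, and the two-sided sandwich by continuous functions transfers this uniformity to the discontinuous function $\1_J$.

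Two cosmetic remarks:

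\begin{enumerate}
\item The pointwise inequalities $P_- - \varepsilon\le \1_J\le P_+ + \varepsilon$ require $P_\pm$ to be real-valued. Fej\'er means of real functions are real, or you can pass to real parts.
\item Setting $g_-=0$ when $|J|<\varepsilon$ does not literally give $\int(g_+-g_-)<\varepsilon$, since $\int g_+=|J|+\eta$ may exceed $\varepsilon$. This is harmless. In that case the lower bound $\frac1N\sum_{k<N}\1_J(\{x+k\alpha\})\ge0>|J|-\varepsilon$ is trivial. Alternatively, for every nondegenerate $J$ you can use trapezoids with ramp width $\eta<\min(\varepsilon,|J|)/2$, and reserve $g_-=0$ for degenerate $J$.
\end{enumerate}
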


We can now show that $\Lambda_v$ has density $v$.

\begin{proposition}\label{prop:density}
We have $D(\Lambda_v)=v$.
\end{proposition}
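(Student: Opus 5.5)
We will read the counting function of $\Lambda_v$ as an ergodic sum and apply Lemma~\ref{lem:uniform-interval} with $J=[1-v,1)$, which has length $|J|=v$. Fix $x\in\R$ and $r>0$. The integers in $[x,x+r]$ form a block of consecutive integers $m,m+1,\dots,m+N-1$, where $m=\lceil x\rceil$ and $N=r+O(1)$. Hence
\[
 \#\bigl(\Lambda_v\cap[x,x+r]\bigr)=\sum_{k=0}^{N-1}\1_J\bigl(\{(m+k)\alpha\}\bigr)=\sum_{k=0}^{N-1}\1_J\bigl(\{y+k\alpha\}\bigr),
\]
where $y=\{m\alpha\}\in[0,1)$. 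Here we use that $\{(m+k)\alpha\}=\{\{m\alpha\}+k\alpha\}$.

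The key point is that the starting position $x$ enters only through $y$. Lemma~\ref{lem:uniform-interval} is uniform in $y\in[0,1]$, so we get
\[
 \sum_{k=0}^{N-1}\1_J(\{y+k\alpha\})=vN+o(N)
\]
with the $o(N)$ independent of $y$, and therefore independent of $x$. Since $N=r+O(1)$ and the summands are bounded by $1$, this becomes $vr+o(r)$ uniformly in $x$ as $r\to\infty$. This is exactly the definition of uniform distribution with $D(\Lambda_v)=v$.

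The only delicate points are bookkeeping. The first is that the $O(1)$ discrepancy between $N$ and $r$ is harmless, which holds because each extra term contributes at most $1$. The second is that Lemma~\ref{lem:uniform-interval} applies to a half-open interval $J$; whether endpoints are included does not matter, since $\{y+k\alpha\}$ hits a given point at most once by irrationality. If one worries about the case $v=1$, it is trivial because $\Lambda_1=\Z$, and likewise $v=0$ gives $\Lambda_0=\emptyset$. I expect no real obstacle, as the uniformity in the lemma does all the work.
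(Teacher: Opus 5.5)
Your proposal is correct and follows the paper's proof: you rewrite the count of $\Lambda_v$ on a block of consecutive integers as an ergodic sum started at $\{m\alpha\}$ and use the uniformity of Lemma~\ref{lem:uniform-interval} in the starting point. You also handle the $N=r+O(1)$ bookkeeping that the paper leaves as ``follows directly''.
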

\begin{proof}[Proof of Proposition~\ref{prop:density}]
Put $J=[1-v,1)\subseteq[0,1]$.  For $M\in\Z$ and
$N\geq1$,
\begin{align*}
 \#\bigl(\Lambda_v\cap\{M,M+1,\ldots,M+N-1\}\bigr)
 &=\sum_{k=0}^{N-1}\1_J(\{(M+k)\alpha\})\\
 &=\sum_{k=0}^{N-1}\1_J(\{\,\{M\alpha\}+k\alpha\,\}).
\end{align*}
Applying Lemma~\ref{lem:uniform-interval} with
$x=\{M\alpha\}$, and using $|J|=v$, we obtain
\begin{equation}\label{eq:integer-block-density}
 \sup_{M\in\Z}
 \left|
  \frac{\#(\Lambda_v\cap[M,M+N))}{N}-v
 \right| \to 0,
\end{equation}
from which $D(\Lambda_v)=v$ follows directly.
\end{proof}

\subsection{Reduction to a band-limited function and conclusion}
We will deduce completeness in Theorem~\ref{thm:integer}
from the classical Theorem~\ref{prop:zero-density}, via a reduction. Fix a measurable $S\subseteq[0,1]$ with $|S|<v\in(0,1)$,
not necessarily an interval anymore.

If $E(\Lambda_v)$ were not complete in $L^p(S)$ for some
$p\in[1,\infty)$, i.e.
$\overline{\lspan}\,E(\Lambda_v)\neq L^p(S)$, then by Hahn-Banach there would be a nonzero
$f\in L^{p'}(S)$, $1/p+1/p'=1$, with
\[
 \int_S f(x)e^{-2\pi i x\lambda}\, dx=0\quad\mbox{for all}\quad 
 \lambda\in\Lambda_v .
\]
Since $|S|<1$ we have $f\in L^1(S)$ as well. Thus to get a contradiction it suffices to show that every $f\in L^1(S)$ with
$\widehat f(\lambda)=0$ for all $\lambda\in\Lambda_v$ implies that $f = 0$ almost everywhere. We fix (a representative of) such an $f$ for the remainder of the section.

The strategy is as follows.  Fix some $x\in[0,1]$. We want to see
that $f(x)=0$. Consider the sequence
\begin{equation}\label{eq:1892gtoqgw}
     \widetilde c_k=f(\{x-k\alpha\}),\quad k\in\Z .
\end{equation}
By the (formal, for now) Fourier inversion formula for $f$,
\[
 \widetilde c_k=\sum_{n\in\Z}\widehat f(n)e^{2\pi i n(x-k\alpha)}
    =\sum_{n\in\Z}\widehat f(n)e^{2\pi i nx}\,e^{-2\pi i k\{n\alpha\}}.
\]
Formally, the $\widetilde c_k$ are the Fourier coefficients of the measure
\[
 \mu_x(t):=\sum_{n\in\Z}\widehat f(n)e^{2\pi i nx}\,\delta_{\{n\alpha\}}(t),
\]
since
\[
 \widetilde c_k=\int_0^1e^{-2\pi i kt}\, d\mu_x(t)=\widehat \mu_x(k).
\]
The key point is that, since $\widehat f(n)=0$ for every $n\in\Lambda_v$, only the $n$ with
$\{n\alpha\}\in[0,1-v)$ contribute to the sum defining $\mu_x$, thus
\begin{equation}\label{eq:032htqbgra}
    \supp(\mu_x)\subseteq [0,1-v]=:I.
\end{equation}
Moreover, from \eqref{eq:1892gtoqgw} and the fact that $\alpha$ is
irrational, the standard pointwise ergodic theorem
(Lemma~\ref{lem:two-sided-ergodic}) shows (for an a.e. choice of $x$) that
$\widetilde c_k=\widehat \mu_x(k)$ is zero as frequently as $f$ is, i.e.
\begin{equation}\label{eq:12t82gh98}
 \liminf_{N\to\infty}
 \frac{\#\{k\in\Z:|k|\leq N,\ \widehat \mu_x(k)=0\}}{2N+1}\geq |[0,1]\setminus S|>1-v=|I| .
\end{equation}
If $\mu_x(t)$ were an actual function in $L^1$, \eqref{eq:032htqbgra}
and \eqref{eq:12t82gh98} would show precisely that we can apply
Proposition~\ref{prop:zero-density} to $\mu_x$.  Then
$\widehat \mu_x(k)=0$ for {\it every} $k\in\Z$, thus in particular
\[
 f(x)=\widetilde c_0=\widehat \mu_x(0)=0,
\]
concluding the result.

Since $\mu_x$ is not a function, we need to consider a regularisation $c_k:=w_k \widetilde c_k$ instead. We choose $w_k=\widehat W(k)$, where $W=\1_{[0,\varepsilon/2]}*\1_{[0,\varepsilon/2]}$: the $w_k$ are then summable, and $\supp (W)\subseteq [0,\varepsilon]$. This replaces $\mu_x$ by the continuous function $g_x:=W*\mu_x$. Moreover, we have the following properties:
\begin{enumerate}
    \item For $\varepsilon>0$ irrational, every
$w_k$ is nonzero. Thus $\widehat g_x(k)=c_k=0$  if and only if $\widetilde c_k=0$.
\item $\supp (g_x)=\supp (W*\mu_x)\subseteq [0,\varepsilon]+\supp (\mu_x)\subseteq[0, 1-v+\varepsilon]=:I_\varepsilon$.
\end{enumerate}
Choosing
$\varepsilon$ small enough so that $|[0,1]\setminus S|>1-v+\varepsilon=|I_\varepsilon|$, we observe that the previous argument still goes through.
The remainder of the proof makes these observations rigorous. We start by describing properties of $W$.

\begin{lemma}\label{lem:window}
Fix an irrational $\varepsilon\in(0,v)$. Then $W$ is continuous, $\supp (W)\subseteq [0,\varepsilon]$, and $w_k\neq0$ for every $k\in\Z$. Explicitly,
\[
 w_0=\frac{\varepsilon^2}4,
 \quad
 w_k=\Bigl(\frac{1-e^{-\pi i k\varepsilon}}{2\pi i k}\Bigr)^{\!2}
 \ \ (k\neq0),
 \quad
 \sum_{k\in\Z}|w_k|=\frac\varepsilon2 .
\]
\end{lemma}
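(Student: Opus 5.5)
The plan is to compute everything directly from the definition $W=\1_{[0,\varepsilon/2]}*\1_{[0,\varepsilon/2]}$, using the convention $w_k=\widehat W(k)=\int_0^1 W(t)e^{-2\pi i kt}\,dt$. Since $\varepsilon<v<1$, the support of $W$ lies inside $[0,1]$, so the Fourier coefficients on the circle agree with the Fourier transform on $\R$.

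First, the structural properties. The convolution of two bounded, compactly supported $L^1$ functions is continuous; here $W$ is explicitly the tent function $W(t)=\min(t,\varepsilon-t)_+$, which is visibly continuous. The support inclusion follows from $\supp(W)\subseteq[0,\varepsilon/2]+[0,\varepsilon/2]=[0,\varepsilon]$.

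Second, the coefficients. Because $\varepsilon<1$, the convolution theorem applies on $\R$ and gives $w_k=\bigl(\widehat{\1_{[0,\varepsilon/2]}}(k)\bigr)^2$. A direct integration yields $\widehat{\1_{[0,\varepsilon/2]}}(k)=\frac{1-e^{-\pi i k\varepsilon}}{2\pi i k}$ for $k\neq0$, and $\varepsilon/2$ for $k=0$. This gives the displayed formulas, with $w_0=\varepsilon^2/4$.

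Third, nonvanishing. For $k\neq0$ we have $w_k=0$ if and only if $e^{-\pi i k\varepsilon}=1$, that is, if and only if $k\varepsilon\in2\Z$. That would force $\varepsilon\in\Q$, which contradicts irrationality. This is the only place the arithmetic hypothesis on $\varepsilon$ enters.

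Fourth, the sum, which I expect to be the main (mild) obstacle. Write $w_k=\bigl(\widehat{\1_{[0,\varepsilon/2]}}(k)\bigr)^2$, so that $|w_k|=|\widehat{\1_{[0,\varepsilon/2]}}(k)|^2$. Then $\sum_k|w_k|=\sum_k|\widehat{\1_{[0,\varepsilon/2]}}(k)|^2$, and by Parseval on $L^2(0,1)$ this equals $\|\1_{[0,\varepsilon/2]}\|_2^2=\varepsilon/2$. This argument simultaneously establishes absolute summability, which is the key property used later to make $g_x=W*\mu_x$ a genuine continuous function. The only care needed is to check that the exponent conventions match: the square in the stated formula is of the complex number itself, not of its modulus, and the modulus identity above handles the summation.
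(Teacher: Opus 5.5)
Your proposal is correct and follows essentially the same route as the paper. You write $w_k=a_k^2$ with $a_k$ the Fourier coefficient of $\1_{[0,\varepsilon/2]}$, derive $w_k\neq0$ from the irrationality of $\varepsilon$, and use Parseval to get $\sum_k|w_k|=\|\1_{[0,\varepsilon/2]}\|_2^2=\varepsilon/2$. The only cosmetic difference is that you obtain continuity from the explicit tent formula, whereas the paper uses that $W$ is a convolution of two $L^2$ functions.
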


\begin{proof}
The Fourier coefficients of a convolution multiply, so $w_k=a_k^2$ with
$a_k=\int_0^{\varepsilon/2}e^{-2\pi i kt}\, dt$, and
the displayed values follow by computing the integral. Parseval's
identity gives
$\sum_k|w_k|=\varepsilon/2$.  If $w_k=0$ then $k\neq0$ and $e^{-\pi i k\varepsilon}=1$, contradicting the irrationality of
$\varepsilon$.  Finally, $W$ is continuous, being a convolution of two
$L^2$ functions, and similarly $\supp (W)\subseteq [0,\varepsilon/2]+[0,\varepsilon/2]=[0,\varepsilon]$.
\end{proof}

Recall that we fixed a representative of  $f\in L^1(0,1)$, satisfying $\widehat f(\lambda)=0$ for every $\lambda\in\Lambda_v$, at the beginning of the section. We next construct an auxiliary function $g_x$ that will be useful later.

\begin{lemma}
\label{lem:broadening}
For $x\in[0,1]$ put $c^{(x)}_k=w_kf(\{x-k\alpha\})$.  Then, for almost
every $x\in[0,1]$, we have $\{c^{(x)}_k\}_{k\in\Z}\in\ell^1(\Z)$ and
there is $g_x\in C([0,1])$ with
\[
 \widehat{g_x}(k)=c^{(x)}_k, \quad  k\in\Z,
 \quad
 \supp (g_x)\subseteq I_\varepsilon:=[0,1-v+\varepsilon] .
\]
\end{lemma}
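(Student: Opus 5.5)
The plan is to define $g_x$ directly as the absolutely convergent Fourier series
\[
 g_x(t)=\sum_{k\in\Z} c^{(x)}_k e^{2\pi i k t},
\]
and to verify its support through a distributional pairing computed via Fubini.

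\textbf{Summability.} By Lemma~\ref{lem:window} the $w_k$ are summable. Fubini gives
\[
 \int_0^1\sum_k |w_k|\,|f(\{x-k\alpha\})|\,dx=\sum_k|w_k|\,\|f\|_1<\infty,
\]
so $\{c^{(x)}_k\}\in\ell^1(\Z)$ for almost every $x$. For such $x$ the series above converges uniformly. Hence $g_x\in C([0,1])$, viewed as a continuous $1$-periodic function, and $\widehat{g_x}(k)=c^{(x)}_k$ for every $k$.

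\textbf{Support.} It remains to show that $g_x$ vanishes on $(1-v+\varepsilon,1)$ for almost every $x$. I will test against trigonometric polynomials, or more generally smooth $\phi$ supported in that open interval. Formally,
\[
 g_x=W*\mu_x, \qquad \mu_x=\sum_n \widehat f(n)e^{2\pi i n x}\delta_{\{n\alpha\}}.
\]
Since $\widehat f$ need not be summable, I will avoid $\mu_x$ pointwise in $x$ and instead integrate against a test function $\psi(x)$ on $[0,1]$. The quantity to compute is
\[
 \Phi(\phi,\psi):=\int_0^1\psi(x)\int_0^1 g_x(t)\overline{\phi(t)}\,dt\,dx .
\]

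Insert the series for $g_x$; the sums over $k$ converge absolutely. Substituting $x\mapsto x+k\alpha$ in the $x$-integral turns $\int\psi(x)f(\{x-k\alpha\})\,dx$ into $\int f(y)\psi(\{y+k\alpha\})\,dy$. Expanding $\psi$ in its (rapidly decaying) Fourier series $\psi(y)=\sum_n\widehat\psi(n)e^{2\pi i n y}$ gives
\[
 \int_0^1 f(y)\psi(\{y+k\alpha\})\,dy=\sum_n \widehat\psi(n)e^{2\pi i n k\alpha}\,\overline{\widehat{\bar f}}\ldots,
\]
which after bookkeeping is a sum over $n$ of $\widehat f(-n)\widehat\psi(n)e^{2\pi ink\alpha}$. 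The $n$-sum converges absolutely because $\widehat f$ is bounded and $\widehat\psi$ is rapidly decreasing. Exchanging the $k$- and $n$-sums, which is allowed since $\sum_{k,n}|w_k|\,|\widehat\psi(n)|<\infty$, we use
\[
 \sum_k w_k\,\overline{\widehat\phi(k)}\,e^{-2\pi i k\{n\alpha\}}\ \text{(up to signs)}=\int_0^1 W(t-\{n\alpha\})\overline{\phi(t)}\,dt ,
\]
which is Parseval for the translate of $W$ by $\{n\alpha\}$. This yields
\[
 \Phi(\phi,\psi)=\sum_n \widehat f(n)\,\widehat\psi(\cdot)\,\langle W(\cdot-\{n\alpha\}),\phi\rangle .
\]

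\textbf{Conclusion.} Each pairing $\langle W(\cdot-\{n\alpha\}),\phi\rangle$ vanishes unless $\{n\alpha\}\in[1-v,1)$. Indeed, if $\{n\alpha\}<1-v$ then $\supp W(\cdot-\{n\alpha\})\subseteq[\{n\alpha\},\{n\alpha\}+\varepsilon]$ lies inside $[0,1-v+\varepsilon]$, mod $1$, since $\varepsilon<v$, and is disjoint from $\supp\phi$. In the remaining case $\{n\alpha\}\in[1-v,1)$ we have $n\in\Lambda_v$, so $\widehat f(n)=0$. Either way every term vanishes, so $\Phi(\phi,\psi)=0$ for all $\psi$. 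Running $\phi$ over a countable dense family of test functions supported in $(1-v+\varepsilon,1)$, we conclude that for almost every $x$ the continuous function $g_x$ integrates to zero against all of them. Hence $g_x=0$ on that interval, which gives $\supp(g_x)\subseteq I_\varepsilon$.

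\textbf{Main obstacle.} The delicate step is the sign and bookkeeping in the interchange: one must get $\widehat f(n)$ rather than $\widehat f(-n)$ matched to $\{n\alpha\}$ rather than $\{-n\alpha\}$, consistently with the definition of $\Lambda_v$. The index conventions, including the choice of $x-k\alpha$ versus $x+k\alpha$, have to be fixed so that the surviving frequencies are exactly those with $\{n\alpha\}\in[1-v,1)$. One also has to treat periodic wrap-around of $W$'s support carefully, which the condition $\varepsilon<v$ handles.
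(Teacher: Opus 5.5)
Your proof is correct, but you get the support statement by a different route than the paper.

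\textbf{The paper's route.} The paper stays pointwise in $x$. It approximates $f$ by Fej\'er means $f_j$ with $\sum_j\|f_j-f\|_{L^1}<\infty$. These are trigonometric polynomials that still satisfy $\widehat{f_j}=0$ on $\Lambda_v$. For them the heuristic $g_x=W*\mu_x$ is a literal finite sum $\sum_{n\notin\Lambda_v}\widehat{f_j}(n)e^{2\pi inx}W(\cdot-\{n\alpha\})$, supported in $I_\varepsilon$. The paper then applies the same Fubini $\ell^1$ bound to $f_j-f$, which gives $g_{x,j}\to g_x$ uniformly for a.e.\ $x$.

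\textbf{Your route.} You keep $f$ itself and argue weakly in $x$. The function $G_\phi(x):=\int_0^1 g_x\overline{\phi}$ lies in $L^1(0,1)$ by the Fubini bound, and its pairings with every smooth $\psi$ vanish, so $G_\phi=0$ a.e. A countable $L^1$-dense family of $\phi\in C_c^\infty((1-v+\varepsilon,1))$, together with continuity of $g_x$, then gives $g_x=0$ on that interval for a.e.\ $x$. The endpoint $t=1$ follows by continuity.

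The sign bookkeeping you were worried about does come out right. With the convention $f(\{x-k\alpha\})$, the coefficient $\widehat f(n)$ is paired with $W(\cdot-\{n\alpha\})$, exactly matching the definition of $\Lambda_v$.

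\textbf{Comparison.} Your approach needs no approximation of $f$, but you pay with measurability and countable-family bookkeeping. The paper's approach makes the formal measure $\mu_x$ rigorous through the approximants.

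\textbf{A simplification.} You can drop the double series entirely by taking $\psi(x)=e^{-2\pi imx}$. The substitution gives $\int_0^1 e^{-2\pi imx}f(\{x-k\alpha\})\,dx=e^{-2\pi imk\alpha}\widehat f(m)$. So the $m$-th Fourier coefficient of $G_\phi$ is $\widehat f(m)\int_0^1 W(t-\{m\alpha\})\overline{\phi(t)}\,dt$. This vanishes for every $m$ by exactly your case distinction.
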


\begin{proof}
Put
$c^{(x)}(f)=\bigl(w_kf(\{x-k\alpha\})\bigr)_{k\in\Z}=c^{(x)}$. Fubini and Lemma~\ref{lem:window} give
\begin{equation}\label{eq:c-tonelli}
 \int_0^1\norm{c^{(x)}(f)}_{\ell^1(\Z)}\, dx
 =\sum_{k\in\Z}|w_k|\,\norm{f}_{L^1}
 =\frac\varepsilon2\norm{f}_{L^1} ,
\end{equation}
hence $c^{(x)}\in\ell^1(\Z)$ for a.e. $x$. Therefore,
$g_x(t)=\sum_{k\in\Z}c^{(x)}_ke^{2\pi i kt}$ defines a continuous
function with $\widehat{g_x}(k)=c^{(x)}_k$. To see that $\supp (g_x)\subseteq I_\varepsilon$ we argue by approximation.

To do so, let $f_j$ be the Fej\'er means of $f$ with
$\sum_j\norm{f_j-f}_{L^1}<\infty$ (possible since the Fej\'er
means converge to $f$ in $L^1([0,1])$
\cite[Chapter~I]{Katznelson2004}).
Each $f_j$ is a trigonometric polynomial whose Fourier coefficients are
multiples of those of $f$, hence they vanish on $\Lambda_v$ as well.
Applying \eqref{eq:c-tonelli} to $f_j-f$ and summing over $j$ gives
$c^{(x)}(f_j)\to c^{(x)}$ in $\ell^1(\Z)$ for a.e. $x$, hence (defining $g_{x,j}$ via $\widehat{g_{x,j}}(k)=c^{(x)}_k(f_j)$) we find 
$\norm{g_{x,j}-g_x}_\infty\leq\norm{c^{(x)}(f_j)-c^{(x)}}_{\ell^1(\Z)}
\to0$.

We have then reduced to showing that
$\supp (g_{x})\subseteq I_\varepsilon$ under the additional assumption that $f$ is a trigonometric
polynomial. In this case, the formal computation in the summary before is legitimate, and $\mu_x$ is a finite sum of point
masses. Then $g_x=W*\mu_x$ is the continuous function
\[
 \sum_{n\notin\Lambda_v}\widehat f(n)e^{2\pi i nx}\,W(\,\cdot-\{n\alpha\}) ,
\]
a finite sum of translates of $W$, each supported in
$[\{n\alpha\},\{n\alpha\}+\varepsilon]\subseteq I_\varepsilon$, thus
$\supp (g_x)\subseteq I_\varepsilon$ indeed.
\end{proof}

We can now prove the theorem.

\begin{proof}[Proof of Theorem~\ref{thm:integer}]
The density claim is Proposition~\ref{prop:density} and we are left proving completeness. As described at the beginning of the present section, it suffices to show that the function $f$ with $f\in L^1(S)$ and $\widehat f(\lambda)=0$ for all
$\lambda\in\Lambda_v$ vanishes almost everywhere. Fix
$\varepsilon\in(0,v-|S|)$ irrational, so that
$|I_\varepsilon|=1-v+\varepsilon<1-|S|$.

For almost every choice of $x\in[0,1]$, the conclusions of
Lemma~\ref{lem:broadening} and Lemma~\ref{lem:two-sided-ergodic}
(applied to $\varphi=\1_{[0,1]\setminus S}$) hold.  We will show that $f(x)=0$ and conclude the proof.
Let $g_x$ be the
function from Lemma~\ref{lem:broadening}: $g_x\in C([0,1])$,
$\supp (g_x)\subseteq I_\varepsilon$, and
$\widehat g_x(k)=w_kf(\{x-k\alpha\})$ for $k\in\Z$.  Since $\supp (f) \subseteq S$, we have
$\widehat g_x(k)=0$ whenever $\{x-k\alpha\}\in[0,1]\setminus S$, and
Lemma~\ref{lem:two-sided-ergodic} gives
\[
 \liminf_{N\to\infty}
 \frac{\#\{k\in\Z:|k|\leq N,\ \widehat g_x(k)=0\}}{2N+1}
 \geq 1-|S|>|I_\varepsilon| .
\]
Proposition~\ref{prop:zero-density} now yields $g_x\equiv 0$, thus
$f(x)=w_0^{-1}\widehat g_x(0)=0$ as desired.
\end{proof}

\section{Periodic weak gaps}
\label{sec:sobolev-sharpness}

Fix a measurable set \(A\subseteq[0,1]\) with \(0<|A|<1\), and set
\(S=A+\Z\). Given
\(f=\ft F\) write
\(\norm{f}_{\alpha,S}^2=\int_S(1+|t|^{2\alpha})|F(t)|^2\,dt\).

As discussed in the introduction, to obtain the missing part of Theorem \ref{thm:sobolev} we show that for any \(0\leq\alpha\leq1/2\) and $S$ as above, no uniformly discrete set $\Lambda$ is a uniqueness set for \(PW^{(\alpha)}_S\cap C(\R)\). 

\subsection{Finitely many points}
We start with the case where $\Lambda$ is finite. The general case will follow by an iterative construction. Clearly, for finite $\Lambda$ we have that $\Lambda$ is not a uniqueness set for $PW^{(\alpha)}_S\cap C(\R)$. We require a quantitative version of this, which is the content of the next proposition.

\begin{proposition}
\label{prop:sobolev-peak}
Let \(0\leq\alpha\leq1/2\). Then no finite set is a uniqueness set for
\(PW^{(\alpha)}_S\cap C(\R)\). More precisely, given a finite set
\(\Lambda\subseteq\R\), a compact set \(K\subseteq\R\),
\(y_0\notin K\cup\Lambda\), and \(\varepsilon>0\), there exists
\(f\in PW^{(\alpha)}_S\cap C(\R)\) with
\[
    f(y_0)=1,
    \quad f|_\Lambda=0,
    \quad \sup_{x\in K}|f(x)|<\varepsilon,
    \quad \norm{f}_{\alpha,S}<\varepsilon.
\]
\end{proposition}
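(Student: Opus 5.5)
We construct $f$ as a product $f=\widehat G\cdot P$. Here $G$ is a fixed function living on a single period of $S$, and $P$ is a long trigonometric series built from integer shifts. The point is that at the endpoint $\alpha=\tfrac12$ the weighted $\ell^2$ norm $\sum(1+|k|)|a_k|^2$ does not control $\sum|a_k|$, so $P$ can have a peak of height $1$ at $y_0$ while its Sobolev cost is arbitrarily small.

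\textbf{Step 1: the building block.} Take $F(t)=\sum_k a_k G(t-k)$ with $G\in L^2(A)$ and finitely many nonzero $a_k$. Then $F$ is supported in $A+\Z=S$, and
\[
f(x)=\widehat G(x)\,P(x),\qquad P(x)=\sum_k a_k e^{-2\pi i k x}.
\]
Since $|t|\le |k|+1$ on $A+k$ and $\alpha\le \tfrac12$, we get
\[
\|f\|_{\alpha,S}^2\le 2\|G\|_2^2\sum_k (1+|k|)\,|a_k|^2 .
\]
Also $F\in L^1$, so $f$ is continuous.

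\textbf{Step 2: the peak.} For a large $N$ put $L=\log N$ and
\[
a_k=\frac{e^{2\pi i k y_0}}{kL}\quad\text{for } N\le k\le N^2,\qquad a_k=0 \text{ otherwise}.
\]
Then $P(y_0)=L^{-1}\sum_{k=N}^{N^2}k^{-1}\to1$, while
\[
\sum_k(1+|k|)|a_k|^2\lesssim L^{-2}\sum_{k=N}^{N^2} k^{-1}\lesssim L^{-1}\to0 .
\]
Summation by parts gives, for $x\notin y_0+\Z$,
\[
|P(x)|\lesssim \frac{1}{L\,N\,\|x-y_0\|_{\T}}+\frac{\text{const}}{L},
\]
so $P\to0$ uniformly on compact sets at positive distance from $y_0+\Z$. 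The only danger is therefore the finitely many periodic copies $y_0+m$, $m\neq0$, lying in the compact set $K':=K\cup\Lambda$. There $P$ equals $P(y_0)\approx1$.

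\textbf{Step 3: killing the periodic copies.} This is the main obstacle, and we handle it through the choice of $G$. Let $\mathcal M=\{m\in\Z\setminus\{0\}: y_0+m\in K'\}$, a finite set; note $0\notin\mathcal M$ because $y_0\notin K'$. The exponentials $e^{-2\pi i y t}$, $y\in y_0+(\{0\}\cup\mathcal M)$, are linearly independent in $L^2(A)$ because $|A|>0$. Hence there is $G\in L^2(A)$ with $\widehat G(y_0)=1$ and $\widehat G(y_0+m)=0$ for $m\in\mathcal M$. Fix this $G$ independently of $N$.

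Given $\eta>0$, continuity of $\widehat G$ gives $\delta>0$ with $|\widehat G|<\eta$ on the $\delta$-neighbourhood of $\{y_0+m:m\in\mathcal M\}$. Since $|P|\le L^{-1}\sum_{k=N}^{N^2} k^{-1}\lesssim1$ everywhere, we have $|f|\lesssim\eta$ there. On the rest of $K'$, $|f|\le\|G\|_1\sup|P|$ with the supremum taken at distance at least $\delta$ from $y_0+\Z$, which tends to $0$ as $N\to\infty$. Dividing by $P(y_0)$, we obtain $f_0$ in the space with $f_0(y_0)=1$, $\sup_{K'}|f_0|$ as small as we like, and $\|f_0\|_{\alpha,S}$ as small as we like.

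\textbf{Step 4: exact zeros on $\Lambda$.} By the same linear-independence argument, applied to the finite set $\Lambda\cup\{y_0\}$ and carried out within a single period, choose fixed functions $h_\lambda\in PW^{(\alpha)}_S\cap C(\R)$, one for each $\lambda\in\Lambda$, with
\[
h_\lambda(\mu)=\delta_{\lambda\mu}\ (\mu\in\Lambda),\qquad h_\lambda(y_0)=0 .
\]
For instance, take $h_\lambda=\widehat{H_\lambda}$ with $H_\lambda\in L^2(A)$ solving a finite interpolation problem. Set
\[
f=f_0-\sum_{\lambda\in\Lambda} f_0(\lambda)\,h_\lambda .
\]
Then $f|_\Lambda=0$ and $f(y_0)=1$. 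The coefficients $f_0(\lambda)$ are small because $\Lambda\subseteq K'$, and the $h_\lambda$ are fixed. So the correction changes $\sup_K|f|$ and $\|f\|_{\alpha,S}$ by an arbitrarily small amount. Choosing $\eta$ small and then $N$ large gives all four required properties.
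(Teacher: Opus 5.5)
Your proof is correct and follows essentially the paper's route: a fixed $\widehat G$ with $G\in L^2(A)$, normalized at $y_0$ and vanishing at the integer translates of $y_0$ lying in $K\cup\Lambda$, multiplied by an integer-frequency polynomial that peaks on $y_0+\Z$ and has small weighted $\ell^2$ cost. The differences are minor: your harmonic weights $1/(k\log N)$ on $[N,N^2]$ treat all $\alpha\le\frac12$ at once and are a smooth version of the paper's average of flat dyadic blocks $R_j$ (which the paper uses only at $\alpha=\frac12$), and your Step~4 correction can be dropped by also requiring $\widehat G|_\Lambda=0$, as the paper does, since multiplying by $P$ preserves these zeros (in Step~3, also take $\delta$ smaller than the distance from $K\cup\Lambda$ to the translates $y_0+m$ with $m\notin\mathcal M$).
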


In order to prove it, we first obtain some \(f\in L^2\), with no control on \(\sup_{x\in K}|f(x)|\) or \(\norm{f}_{\alpha,S}\):

\begin{lemma}\label{lem:sobolev-moment}
Let \(\Lambda\subseteq\R\) be finite and \(y_0\notin\Lambda\). There exists
\(f\in PW_A\) such that
\[
    f(y_0)=1,
    \quad f|_\Lambda=0.
\]
\end{lemma}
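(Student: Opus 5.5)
The plan is to argue by finite-dimensional linear algebra on $L^2(A)$. Write $\Lambda=\{\lambda_1,\dots,\lambda_m\}$ and, for $y\in\R$, let $e_y(t)=e^{-2\pi i y t}$ restricted to $A$. For $F\in L^2(A)$ we have $\ft F(y)=\langle F,\overline{e_y}\rangle_{L^2(A)}$, so the lemma asks for $F\in L^2(A)$ with $\langle F,\overline{e_{\lambda_j}}\rangle=0$ for all $j$ and $\langle F,\overline{e_{y_0}}\rangle=1$. Such an $F$ exists as soon as $\overline{e_{y_0}}$ does not lie in the (finite-dimensional, hence closed) span $V$ of $\overline{e_{\lambda_1}},\dots,\overline{e_{\lambda_m}}$ in $L^2(A)$.

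Given this, the construction is as follows. Let $P$ be the orthogonal projection onto $V^\perp$. Set $F=P\overline{e_{y_0}}/\|P\overline{e_{y_0}}\|^2$, which is nonzero by the non-membership claim, and extend $F$ by zero outside $A$. Then $F\perp V$, so $\ft F(\lambda_j)=0$ for every $j$. Moreover,
\[
\ft F(y_0)=\langle F,\overline{e_{y_0}}\rangle=\frac{\langle P\overline{e_{y_0}},P\overline{e_{y_0}}\rangle}{\|P\overline{e_{y_0}}\|^2}=1.
\]
Finally, $f=\ft F\in PW_A$ and it is continuous, since $|A|<\infty$.

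The only real content, and the main obstacle, is the linear independence of the exponentials $e^{2\pi i y t}$, $y\in\Lambda\cup\{y_0\}$ (finitely many distinct frequencies), as elements of $L^2(A)$ when $A$ merely has positive measure. To prove it, suppose $\sum_j c_j e^{2\pi i y_j t}=0$ for a.e.\ $t\in A$. The left-hand side is an entire function of $t$, namely an exponential polynomial. Since $|A|>0$, its zero set in $\R$ has positive measure and therefore an accumulation point, so by the identity theorem the function vanishes identically on $\C$. It then remains to see that an exponential polynomial with distinct frequencies vanishing identically has all coefficients zero. For this, take the Fourier transform as a tempered distribution: it equals $\sum_j c_j\delta_{y_j}$, which is zero only if every $c_j=0$. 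Alternatively, one can use a Vandermonde argument on derivatives at $0$.

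Hence $\overline{e_{y_0}}\notin V$, and the lemma follows.
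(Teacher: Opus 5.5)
Your proposal is correct and follows essentially the same route as the paper: the paper shows that the map $T\colon L^2(A)\to\C^{m+1}$, $F\mapsto(\ft F(d_j))_j$, is onto by taking an annihilating functional. That functional yields an exponential polynomial vanishing a.e.\ on $A$, hence identically by the identity theorem, and the paper concludes with the Vandermonde determinant obtained from derivatives at $0$. Your orthogonal-projection construction is the explicit Hilbert-space form of that duality argument, and it rests on the same linear independence of finitely many exponentials on a set of positive measure.
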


\begin{proof}
Write \(\{y_0\}\cup\Lambda=\{d_0,\ldots,d_m\}\), where \(d_0=y_0\), and define
\[
    T:L^2(A)\longrightarrow\C^{m+1},
    \quad
    TF=\left(\int_AF(t)e^{-2\pi i d_jt}\,dt\right)_{j=0}^m.
\]
We show that \(T\) is onto. If \(\operatorname{im} T\neq\C^{m+1}\), there
exists \((c_0,\ldots,c_m)\in\C^{m+1}\setminus\{0\}\) such that the linear
functional \((z_0,\ldots,z_m)\mapsto\sum_{j=0}^m c_jz_j\) vanishes on
\(\operatorname{im} T\). Hence, for every \(F\in L^2(A)\),
\[
    0=\sum_{j=0}^m c_j\int_AF(t)e^{-2\pi i d_jt}\,dt
     =\int_AF(t)H(t)\,dt,
    \quad H(t):=\sum_{j=0}^m c_je^{-2\pi i d_jt}.
\]
Since \(\overline H|_A\in L^2(A)\), taking \(F=\overline H|_A\) gives
\(H=0\) a.e. on \(A\).

We claim that then \(c_0=\cdots=c_m=0\), obtaining a contradiction. Consider
the analytic extension \(H(z)=\sum_{j=0}^m c_j e^{-2\pi i d_jz}\).
By the
assumptions, its zero set has an accumulation point, and thus \(H\equiv0\).
Differentiating at \(0\) for orders \(n=0,1,\ldots,m\), we obtain
\[
    0=H^{(n)}(0)=(-2\pi i)^n\sum_{j=0}^m c_jd_j^n.
\]
Thus
\[
    \begin{pmatrix}
        1&1&\cdots&1\\
        d_0&d_1&\cdots&d_m\\
        \vdots&\vdots&&\vdots\\
        d_0^m&d_1^m&\cdots&d_m^m
    \end{pmatrix}
    \begin{pmatrix}c_0\\c_1\\\vdots\\c_m\end{pmatrix}=0.
\]
Its determinant is \(\prod_{0\leq j<k\leq m}(d_k-d_j)\), which is
nonzero because the \(d_j\)'s are distinct. Hence
\(c_0=\cdots=c_m=0\), a contradiction, and therefore \(T\) is onto.
Choosing \(F\) with \(TF=(1,0,\ldots,0)\) and setting \(f=\ft F\) proves
the lemma.
\end{proof}

We can now give a proof of the proposition above.

\begin{proof}[Proof of Proposition~\ref{prop:sobolev-peak}]
By translation invariance, we may assume that \(y_0=0\). By Lemma~\ref{lem:sobolev-moment}, there exists \(p\in PW_A\) such that
\[
    p(0)=1,
    \quad p|_{\Lambda\cup(K\cap\Z)}=0.
\]
Since \(|A|<\infty\), we have \(\ift p\in L^1(A)\), and hence \(p\) is
continuous. For \(j\geq0\), let
\[
    I_j=\{2^j,\ldots,2^{j+1}-1\},\quad
    R_j(x)=2^{-j}\sum_{k\in I_j}e^{-2\pi ikx},\quad
    f_j(x)=p(x)R_j(x).
\]
Note that \(R_j(0)=1\) and \(|R_j|\leq1\), thus \(f_j(0)=1\) and \(f_j|_\Lambda=0\), just as for $p$. Moreover, the
geometric-sum formula applied to \(R_j\) gives
\begin{equation}\label{eq:est}
    |R_j(x)|\leq 2^{-j}\frac{2}{|1-e^{-2\pi ix}|},
    \quad x\notin\Z.
\end{equation}
It therefore follows that \(R_j\to0\) uniformly on a set which has positive distance to \(\Z\). Since \(p\) vanishes on
\(K\cap\Z\), continuity and compactness give
\(\sup_{x\in K}|f_j(x)|\to0\), since near \(K\cap\Z\) we can use the estimate \(|R_j|\leq1\),
and away from this set we use the displayed estimate in \eqref{eq:est}.

On the Fourier side, we have
\[
    \ift f_j
    =(\ift p)*\left(2^{-j}\sum_{k\in I_j}\delta_k\right)
    =2^{-j}\sum_{k\in I_j}(\ift p)(\,\cdot-k).
\]
Since \(\supp(\ift p)\subseteq A\), we obtain
\[
    \supp(\ift f_j)\subseteq\bigcup_{k\in I_j}(A+k)\subseteq S,
\]
and \(f_j\in PW^{(\alpha)}_S\cap C(\R)\). Since the translates \(A+k\) are
disjoint up to null sets, it follows that
\begin{align*}
    \norm{f_j}_{\alpha,S}^2
    &=2^{-2j}\sum_{k\in I_j}
      \int_A(1+|t+k|^{2\alpha})|(\ift p)(t)|^2\,dt \\
    &\leq C_\alpha 2^{(2\alpha-1)j}\norm{\ift p}_{L^2(A)}^2.
\end{align*}

If \(\alpha<1/2\), both \(\sup_K|f_j|\) and
\(\norm{f_j}_{\alpha,S}\) tend to zero. Thus one sufficiently large \(j\)
works, and we take \(f=f_j\).

At the critical value \(\alpha=1/2\), a large \(j\) still makes \(f_j\) small on
\(K\), but the last estimate gives only the uniform bound
\(\norm{f_j}_{1/2,S}^2\leq C:=C_{1/2}\norm{\ift p}_{L^2(A)}^2\).
To get a small norm instead, we use orthogonal averaging to reduce the norm. Define
\[
    f=\frac1N\sum_{j=J}^{J+N-1}f_j,
\]
with \(J\) so that \(\sup_K|f_j|<\varepsilon\) for every \(j\geq J\) and
\(N\) so that \(C/N<\varepsilon^2\). Then still \(f(0)=1\),
\(f|_\Lambda=0\), and \(\sup_K|f|<\varepsilon\). Finally, the disjointness
of the Fourier supports yields
\[
    \norm{f}_{1/2,S}^2
    =\frac1{N^2}\sum_{j=J}^{J+N-1}\norm{f_j}_{1/2,S}^2
    \leq\frac{C}{N}<\varepsilon^2.
\]
This proves the endpoint case. Translating back completes the proof.
\end{proof}

\subsection{The general case}
To obtain the missing part of Theorem \ref{thm:sobolev}, we prove:
\begin{theorem}\label{thm:sobolev-negative}
Let \(0\leq\alpha\leq1/2\). No uniformly discrete set
\(\Lambda\subseteq\R\) is a uniqueness set for
\(PW^{(\alpha)}_S\cap C(\R)\). More precisely, for every
\(x_0\notin\Lambda\) there exists \(f\in PW^{(\alpha)}_S\cap C(\R)\) such
that \(f(x_0)=1\) and \(f|_\Lambda=0\).
\end{theorem}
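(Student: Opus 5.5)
We build $f$ as an absolutely convergent series $f=\sum_{m\geq0}h_m$. The terms $h_m$ come from Proposition~\ref{prop:sobolev-peak}, applied to larger and larger finite pieces of $\Lambda$. By translation we may take $x_0=0$.

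Fix an exhaustion of $\Lambda$ by finite sets $\Lambda_m=\Lambda\cap[-R_m,R_m]$, with radii $R_0<R_1<\cdots\to\infty$ chosen inductively. Uniform discreteness makes each $\Lambda_m$ finite. It also gives a fixed $\delta>0$ such that the intervals $(\lambda-\delta,\lambda+\delta)$, $\lambda\in\Lambda$, are disjoint, and we choose $\delta$ so that none of them contains $0$.

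\textbf{Base step.} Apply Proposition~\ref{prop:sobolev-peak} with finite set $\Lambda_0$, $y_0=0$, empty $K$ and $\varepsilon=1$. This gives $f_0=h_0$ with $f_0(0)=1$ and $f_0|_{\Lambda_0}=0$.

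\textbf{Inductive step.} Suppose $f_{m-1}=h_0+\cdots+h_{m-1}$ is continuous and vanishes on $\Lambda_{m-1}$, and let $\eta_m$ be small (say $\eta_m=2^{-m}/4$).
\begin{enumerate}[(a)]
\item Choose $R_m$ so large that $|f_{m-1}|<\eta_m$ on $\Lambda\setminus\Lambda_m$, i.e.\ at the not-yet-treated points. This would follow if $f_{m-1}(x)\to0$ as $|x|\to\infty$. That holds when $\alpha>1/2$ by Riemann--Lebesgue, but we are only given continuity, so this is the difficulty addressed below.
\item We now cancel the values of $f_{m-1}$ on $\Lambda_m\setminus\Lambda_{m-1}$. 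For each $\lambda$ in this finite set, Proposition~\ref{prop:sobolev-peak} gives a peak function $\phi_\lambda$ with $\phi_\lambda(\lambda)=1$, vanishing on $\Lambda_m\setminus\{\lambda\}$ and at $0$. It is small on a compact $K$ containing $[-R_{m-1},R_{m-1}]$ and the other chosen points, and has norm $<\varepsilon_m$.
\item Set $h_m=-\sum_\lambda f_{m-1}(\lambda)\phi_\lambda$.
\end{enumerate}
Then $f_m=f_{m-1}+h_m$ vanishes on all of $\Lambda_m$, keeps $f_m(0)=1$, and changes $f_{m-1}$ by at most $\#\Lambda_m\cdot\varepsilon_m\cdot\sup|f_{m-1}(\lambda)|$ both in $\norm{\cdot}_{\alpha,S}$ and uniformly on $K$. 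Taking $\varepsilon_m\leq 2^{-m}/(1+\#\Lambda_m\sup_{\Lambda_m}|f_{m-1}|)$ makes $\sum_m\norm{h_m}_{\alpha,S}<\infty$. Hence $\ift f_m$ converges in the weighted $L^2(S)$ space, and $f_m\to f\in PW_S^{(\alpha)}$ in $L^2(\R)$. Because each $h_m$ is at most $2^{-m}$ on $[-R_{m-1},R_{m-1}]$, $f_m$ converges locally uniformly, so $f$ is continuous. The limit satisfies $f(0)=1$ and $f|_{\Lambda}=0$, since each point is killed at one stage and later corrections vanish on $\Lambda_m$ exactly.

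\textbf{Main obstacle: uniform smallness.} The sticking point is that Proposition~\ref{prop:sobolev-peak} only controls $\sup_K$ on a compact $K$. So $h_m$ could be large at far-away points of $\Lambda$, and the values to be cancelled at later stages could blow up. Two fixes:
\begin{enumerate}[(i)]
\item \textbf{Work with boundedness at each finite stage.} Each $h_m$ is a finite combination of functions $p\cdot R_j$ as in the proof of the proposition, where $p$ is the Fourier transform of an $L^1(A)$ function. So every $h_m$ is bounded and, by Riemann--Lebesgue, tends to $0$ at infinity. Then $f_{m-1}$, a finite sum, tends to $0$ at infinity, and step (a) is legitimate.
\item \textbf{No circularity.} The bounds are arranged so that they are not circular. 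Cancelling at $\Lambda_m\setminus\Lambda_{m-1}$ costs at most $\#\Lambda_m\cdot\eta_m\cdot\varepsilon_m$-sized errors. These errors are small on the region where earlier zeros sit, except exactly at $\Lambda_m$, where the zeros are exact anyway.
\end{enumerate}
Convergence of the series is then controlled purely by the norms $\norm{h_m}_{\alpha,S}$ and by the uniform bounds on the compact sets $[-R_{m-1},R_{m-1}]$. Continuity of the limit is only a local statement, so this suffices.
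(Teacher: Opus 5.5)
Your proposal is correct and is essentially the paper's argument. Both build an inductive correction $f_m=f_{m-1}+h_m$ from Proposition~\ref{prop:sobolev-peak}, scaling the tolerances by the values being cancelled so that the series converges in $\norm{\cdot}_{\alpha,S}$ and locally uniformly; the only cosmetic difference is that the paper cancels one point at a time with tolerance $2^{-n}/(1+|a_n|)$, $a_n=-f_{n-1}(\lambda_n)$, rather than a block $\Lambda_m\setminus\Lambda_{m-1}$ at a time. Your ``main obstacle'' is not actually an obstacle: $\varepsilon_m$ is chosen after $f_{m-1}$ is fixed, so the finitely many values to be cancelled are known numbers, and step (a) together with the decay-at-infinity argument in fix (i) (which relies on the proof of the proposition rather than its statement) can simply be dropped.
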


\begin{proof}
Fix a uniformly discrete set \(\Lambda\subseteq\R\) and
\(x_0\notin\Lambda\). If \(\Lambda\) is finite, Proposition~\ref{prop:sobolev-peak}, applied with \(y_0=x_0\), \(K=\varnothing\), and
\(\varepsilon=1\), gives the required function. Assume then that \(\Lambda\)
is infinite, and enumerate it as \(\Lambda=\{\lambda_1,\lambda_2,\ldots\}\),
with \(|\lambda_n|\to\infty\). Set
\(K_n=[-n,n]\setminus(\lambda_n-1,\lambda_n+1)\), so that
\(\lambda_n\notin K_n\) and every compact subset of \(\R\) lies in
\(K_n\) for all sufficiently large \(n\).

By Proposition~\ref{prop:sobolev-peak}, choose
\(f_0\in PW^{(\alpha)}_S\cap C(\R)\) with \(f_0(x_0)=1\). Arguing by
induction, suppose that \(f_{n-1}(x_0)=1\) and
\(f_{n-1}(\lambda_j)=0\) for \(j<n\), and put
\(a_n=-f_{n-1}(\lambda_n)\) and
\(E_n=\{x_0,\lambda_1,\ldots,\lambda_{n-1}\}\).
Proposition~\ref{prop:sobolev-peak} gives
\(p_n\in PW^{(\alpha)}_S\cap C(\R)\) such that
\[
    p_n(\lambda_n)=1,
    \quad p_n|_{E_n}=0,
    \quad
    \max\left\{\norm{p_n}_{\alpha,S},
        \sup_{x\in K_n}|p_n(x)|\right\}
    <\frac{2^{-n}}{1+|a_n|}.
\]
Therefore \(f_n=f_{n-1}+a_np_n\) satisfies
\[
    \begin{gathered}
        f_n(x_0)=1,\quad f_n(\lambda_j)=0\quad(j\leq n),\\
        \norm{f_n-f_{n-1}}_{\alpha,S}<2^{-n},\quad
        \sup_{K_n}|f_n-f_{n-1}|<2^{-n}.
    \end{gathered}
\]

Since \(\sum_n2^{-n}<\infty\), the first estimate gives convergence in
\(PW^{(\alpha)}_S\), and the second gives local uniform convergence. We obtain
a limit \(f\in PW^{(\alpha)}_S\cap C(\R)\) with
\(f(x_0)=1\) and \(f|_\Lambda=0\), concluding the proof.
\end{proof}

\section{Higher-dimensional statements}\label{sec:higher-dim}

We collect the higher-dimensional analogues of our results from Section \ref{sec:intro}, and in the same order as the one-dimensional versions. To do so, we start by collecting some definitions.

For $\Lambda\subseteq\Rd$, write
$
    E(\Lambda)=\{e^{2\pi i\lambda\cdot x}:\lambda\in\Lambda\},
$
where $\lambda\cdot x$ denotes the Euclidean inner product. The set
$\Lambda$ is uniformly discrete if
\[
    \inf\{\|\lambda-\lambda'\|:\lambda,\lambda'\in\Lambda,
    \ \lambda\neq\lambda'\}>0,
\]
where $\| \cdot \|$ denotes the Euclidean distance in $\R^d$. The set $\Lambda$ has uniform density $D(\Lambda)$ if
\[
    \#\bigl(\Lambda\cap(y+[0,R)^d)\bigr)
    =D(\Lambda)R^d+o(R^d),\quad R\to\infty,
\]
uniformly in $y\in\Rd$. We use the Fourier conventions
\[
    \ft F(x)=\int_{\Rd}F(t)e^{-2\pi it\cdot x}\,dt,
    \quad
    \ift F(x)=\int_{\Rd}F(t)e^{2\pi it\cdot x}\,dt.
\]
For $\alpha\geq0$, let
$PW^{(\alpha)}_S(\Rd)$ denote the space of inverse Fourier transforms
$f=\ift F$, where $F\in L^2(\Rd)$ satisfies $F=0$ a.e.\ on $\Rd\setminus S$, and
\[
 \|f\|_{\alpha,S}^2:=\int_S\bigl(1+|t|^{2\alpha}\bigr)|F(t)|^2\,dt<\infty.
\]
Thus $PW_S=PW^{(0)}_S(\Rd)$, up to an equivalent norm.  In dimension $d>1$, periodic weak gaps mean that
$
 \bigl|(S+\Z^d)\cap[0,1]^d\bigr|<1.
$
We are ready to state the higher dimensional versions of our main results.

\vspace{0.3cm}

\begin{enumerate}
    \item Let $d\geq1$ and let $\alpha,\beta\in\Rd$ have the property that $1,\alpha_1,\dots,\alpha_d$ are linearly independent over over $\Q$,  $1+\alpha\cdot\beta,\beta_1,\dots,\beta_d$ are linearly independent over $\Q$, and $\|\beta\|_2<\tfrac12$.
For $n\in\Z^d$, define
\[
 \delta_n=\beta\Bigl(\{n\cdot\alpha\}-\frac12\Bigr),
 \quad
 \lambda_n=n+\delta_n,
 \quad
 \Lambda=\Lambda_{\alpha,\beta}=\{\lambda_n:n\in\Z^d\}.
\]
Then $\Lambda$ is uniformly discrete and has uniform density $D(\Lambda)=1$. For every measurable $S\subseteq\Rd$ with $|S|<1$ and every
$f\in L^1(S)$, the conditions
\[
 \int_S f(x)e^{-2\pi i\lambda\cdot x}\,dx=0,
 \quad\lambda\in\Lambda,
\]
imply that $f=0$ almost everywhere.  In particular, $E(\Lambda)$ is
complete in $L^p(S)$ for every $p\in[1,\infty)$ and every measurable
$S\subseteq\Rd$ with $|S|<1$.
\item Let $d\geq1$ and let $\Lambda=\{n+\delta_n:n\in\Z^d\}$, where
$\delta_n\in\Rd$ and $\delta_n\to0$ as $|n|\to\infty$.  For any
$\varepsilon>0$, there exists a measurable $S\subseteq\Rd$ such that
$|S|<\varepsilon$ and $E(\Lambda)$ is not complete in $L^2(S)$.
\item Let $d\geq1$, let $\alpha=(\alpha_1,\dots,\alpha_d)\in\Rd$ be such that
$1,\alpha_1,\dots,\alpha_d$ are linearly independent over $\Q$, let
$v\in[0,1]$, and define
\[
 \Lambda_v=\{n\in\Z^d:\{n\cdot\alpha\}\in[1-v,1)\}.
\]
Then $\Lambda_v$ is uniformly distributed with uniform density $D(\Lambda_v) = v$.
Moreover, for every measurable
$S\subseteq[0,1]^d$ with $|S|<v$ and every $f\in L^1(S)$, the
condition
\[
 \int_S f(x)e^{-2\pi ix\cdot\lambda}\,dx=0,
 \quad\lambda\in\Lambda_v,
\]
implies that $f=0$ almost everywhere.  In particular, the exponential
system $E(\Lambda_v)=\{e^{2\pi i\lambda\cdot x}:\lambda\in\Lambda_v\}$
is complete in $L^p(S)$ for every $p\in[1,\infty)$ and every
measurable $S\subseteq[0,1]^d$ with $|S|<v$.
\item Let $d\geq1$, let $A\subseteq[0,1]^d$ be measurable with $0<|A|<1$,
and set $S=A+\Z^d$.  If $0\leq\alpha\leq d/2$, then every uniformly
discrete set $\Lambda\subseteq\Rd$ is a nonuniqueness set for
$PW^{(\alpha)}_S(\Rd)\cap C(\Rd)$. On the other hand, if $\alpha > d/2$, then $PW^{(\alpha)}_S(\Rd)$ admits a uniformly discrete uniqueness set.
\item 
In particular, for every $d\geq1$, we find: For every measurable spectrum of the form
$S=A+\Z^d$, with $A$ is an (4), $PW_S\cap C(\Rd)$ has no
uniformly discrete uniqueness set. One may take $A=[0,\tfrac12]+\Z^d$.
\end{enumerate}

\section{Formalization in Lean}
\label{sec:lean}

This section discusses the Lean verification of items~(1)-(3) and the nonuniqueness conclusions in items~(4)-(5) of Section~\ref{sec:higher-dim}. The Lean formalization is available at the accompanying repository 
\begin{center}   \url{https://github.com/EnricFloritMath/UniversalCompleteness}.
\end{center}
In the repository, four \lean{Showcase.lean} files present the checked statements. Our goal was to make them accessible to mathematicians who are not well-versed in Lean.

The formalization relies on the library Mathlib. We now explain the results of Section~\ref{sec:higher-dim} in Lean.

\subsection{Universal completeness}
\label{subsec:lean-universal}

We start by formalizing the first theorem of Section \ref{sec:higher-dim} in Lean.
To do so, we recall that the frequencies in (1) of Section \ref{sec:higher-dim} are a bounded perturbation of $\Z^d$.
The parameters $\alpha,\beta\in\R^d$ satisfy two distinct rational
independence conditions: that of $1,\alpha_1,\ldots,\alpha_d$ and that of
$1+\alpha\cdot\beta,\beta_1,\ldots,\beta_d$.
We first write the dot product and the exponential
$x \mapsto e(\xi,x) = e^{2\pi i\xi\cdot x}$ with respect to the frequency $\xi \in \R^n$ in Lean.
\begin{leancode}
    def dot (x y : ℝᵈ) : ℝ := ∑ i, x i * y i
    def e (ξ x : ℝᵈ) : ℂ :=
      Complex.exp (((2 * π : ℝ) : ℂ) * I * (dot ξ x : ℂ))
\end{leancode}
Here \lean{def} introduces a definition. The declaration \lean{def dot}
takes two vector inputs, \lean{x} and \lean{y}, and returns a real number. The input and output types appear after the colons. The sum runs over
all $d$ coordinates. 
Next, we define the system of exponentials $E(\Lambda)$. In Lean, this system is defined as follows.
\begin{leancode}
    def E (Λ : Set ℝᵈ) : Set (ℝᵈ → ℂ) := e '' Λ
\end{leancode}
The term $\mathrm{e} \;''\; \Lambda$ denotes the image of $\Lambda$ under $\xi \mapsto e(\xi,\cdot)$, getting a set of exponential functions.
The two arithmetic assumptions given in the target statement are encoded in the the definition \lean{Independent}.
\begin{leancode}
    def Independent (α : ℝᵈ) : Prop :=
      ∀ (q₀ : ℚ) (q : Fin d → ℚ),
        (q₀ : ℝ) + ∑ i, (q i : ℝ) * α i = 0 → q₀ = 0 ∧ ∀ i, q i = 0.
\end{leancode}
The definition \lean{Nonresonant} is analogous and corresponds to the statement that $1+\alpha\cdot\beta, \beta_0,...\beta_{d-1}$ are linearly independent over $\mathbb Q$. 
We then define the Euclidean norm via
\begin{leancode}
    def norm₂ (x : ℝᵈ) : ℝ := √(∑ i, x i ^ 2). 
\end{leancode}
We are now ready to define the perturbation $(\delta_n)_{n \in \Z^d}$ as well as the set $\Lambda_{\alpha,\beta}$.
\begin{leancode}
    def δ (α β : ℝᵈ) (n : ℤᵈ) : ℝᵈ :=
      fun i => (Int.fract (∑ j, (n j : ℝ) * α j) - 1 / 2) * β i
    def Λ (α β : ℝᵈ) : Set ℝᵈ :=
      Set.range (fun n : ℤᵈ => fun i => (n i : ℝ) + δ α β n i)
\end{leancode}
The notation \lean{Set.range} takes all $n\in\Z^d$. Thus, the definition \lean{def} $\Lambda \; (\alpha \; \beta\;:\;\R^d)$ corresponds exactly to the set $\Lambda_{\alpha,\beta}=\{n+\delta_n:n\in\Z^d\}$, with $\delta_n=\beta(\{n\cdot\alpha\}-1/2)$.
Next, we define the notions of uniform discreteness and uniform density, using intersections of $\Lambda$ with translates of the half open cube $[0,R)^d$.
\begin{leancode}
    def UniformlyDiscrete (Λ : Set ℝᵈ) : Prop :=
      ∃ ρ > (0 : ℝ), ∀ x ∈ Λ, ∀ y ∈ Λ, x ≠ y → ρ ≤ norm₂ (x - y)
    def UniformDensity (Λ : Set ℝᵈ) (D : ℝ) : Prop :=
      (∀ (x : ℝᵈ) (R : ℝ), 0 < R →
        (Λ ∩ {y | ∀ i, x i ≤ y i ∧ y i < x i + R}).Finite) ∧
       ∀ ε > (0 : ℝ), ∃ R₀ > (0 : ℝ), ∀ (R : ℝ), R₀ ≤ R → ∀ (x : ℝᵈ),
         |((Λ ∩ {y | ∀ i, x i ≤ y i ∧ y i < x i + R}).ncard : ℝ) / R ^ d - D| ≤ ε
\end{leancode}
For \lean{UniformDensity}, we start by ensuring that the intersection is finite using \lean{.Finite} so that we can then consider its cardinality \lean{.ncard}. 
We use Mathlib's \lean{volume} for Lebesgue measure and
\lean{.restrict S} to restrict a measure to a measurable set $S$.
We then define what it means for $\Lambda$ to be a universal uniqueness set for $L^1(S)$ for every $S$ with measure bounded by $v$.
\begin{leancode}
    def L1Uniqueness (Λ : Set ℝᵈ) (v : ℝ) : Prop :=
      ∀ S : Set ℝᵈ, MeasurableSet S → volume S < ENNReal.ofReal v →
        ∀ f : ℝᵈ → ℂ, Integrable f (volume.restrict S) →
          (∀ g ∈ E Λ, (∫ x in S, f x * star (g x)) = 0) →
          f =ᵐ[volume.restrict S] 0
\end{leancode}
The above definition states that for every $S$ with $|S|<v$, if $f\in L^1(S)$ is such that $\int_S f(x) e^{-2\pi i \lambda x}\, dx = 0$ for every $\lambda\in \Lambda$ (here, \lean{star} is the complex conjugate), then $f=0$ almost everywhere on $S$. 

We similarly define completeness of the exponential system using the following Lean definition.
\begin{leancode}
    def Complete (Λ : Set ℝᵈ) (S : Set ℝᵈ) (p : ℝ≥0∞) (hp : 1 ≤ p) : Prop :=
      letI : Fact (1 ≤ p) := ⟨hp⟩
      Dense (↑(Submodule.span ℂ
        {g : Lp ℂ p (volume.restrict S) | ∃ f ∈ E Λ, g =ᵐ[volume.restrict S] f}) :
        Set (Lp ℂ p (volume.restrict S)))
\end{leancode}
The braces select $L^p(S)$ classes agreeing almost everywhere with an exponential in $E(\Lambda)$. The term \lean{Submodule.span} corresponds to finite complex linear combinations, and \lean{Dense} takes their closure in
the $L^p$ norm.

We note that (\lean{hp :} $1\le p$) is not a hypothesis: whenever we use \lean{Complete}, we need to provide a \textit{proof} of the fact that $1\leq p$. The term \lean{letI : Fact} $(1 \le p)$ \lean{:=} $\langle$\lean{hp}$\rangle$ turns this proof into a hypothesis so that the rest of the machinery of Mathlib's \lean{Lp} can be used. 
Consequently, we define universal completeness, with respect to a density parameter $v\in \mathbb R$, as follows:
\begin{leancode}
    def UniversallyComplete (Λ : Set ℝᵈ) (v : ℝ) : Prop :=
      ∀ S : Set ℝᵈ, MeasurableSet S → volume S < ENNReal.ofReal v →
        ∀ (p : ℝ≥0∞) (hp : 1 ≤ p), p < ∞ → Complete Λ S p hp
\end{leancode}
This definition represents exactly that $E(\Lambda)$ is complete in $L^p(S)$ for every $|S|<v$.  
Lastly, we can state the theorem in item (1) of Section \ref{sec:higher-dim}.
\begin{leancode}
    theorem universal_completeness
      (hd : 0 < d) (α β : ℝᵈ)     
      (hα : Independent α) (hβ : Nonresonant α β) (hsmall : norm₂ β < 1 / 2) :
      UniformlyDiscrete (Λ α β) ∧
      UniformDensity (Λ α β) 1 ∧
      L1Uniqueness (Λ α β) 1 ∧
      UniversallyComplete (Λ α β) 1 := by
\end{leancode}
The Lean statement of this theorem starts by fixing $d\geq 1$, and $\alpha, \beta\in\mathbb R^d$. We then assume linear independence over $\mathbb Q$ of $\lbrace 1, \alpha_0, ..., \alpha_{d-1}\rbrace$ and  $\lbrace 1+\alpha\cdot\beta, \beta_0, ..., \beta_{d-1}\rbrace$. The conclusions of the theorem are listed in the following order: $\Lambda\; \alpha \;\beta$ is uniformly discrete, has uniform density $1$, is a uniqueness set for every $S$ with measure $<1$, and $E(\Lambda \;\alpha \;\beta)$ is complete in $L^p(S)$, $1\leq p <\infty$, for every $|S|<1$. 

\subsection{Null perturbations}
\label{subsec:lean-null}
The second theorem is item (2) in Section~\ref{sec:higher-dim} and concerns lattice perturbations by a sequence $\delta_n$ that tend to zero as $|n|\to\infty$. In order to state this in Lean, we use \lean{integerEmbed} which embeds $\mathbb Z^d$ into $\mathbb R^d$. We write the perturbation as a function $\delta : \Z^d \to \R^d$ and call the corresponding definition \lean{NullPerturbation}. The associated set $\Lambda$ constructed from this perturbation is defined by \lean{def} $\Lambda \;  (\delta \;:\; \Z^d \to \R^d)$.
\begin{leancode}
    def NullPerturbation (δ : ℤᵈ → ℝᵈ) : Prop :=
      ∀ η > (0 : ℝ), ∃ N : ℕ, ∀ n : ℤᵈ,
        (N : ℝ) ≤ norm₂ (integerEmbed n) → norm₂ (δ n) < η
    def Λ (δ : ℤᵈ → ℝᵈ) : Set ℝᵈ :=
      Set.range (fun n : ℤᵈ => fun i => (n i : ℝ) + δ n i)
\end{leancode}
The latter definitions allow us to state the main theorem, which is the negation of \lean{Complete} (defined as in the previous subsection) in the particular case $p=2$:
\begin{leancode}
    theorem null_perturbations
      (hd : 0 < d) (δ : ℤᵈ → ℝᵈ) (hδ : NullPerturbation δ)
      (ε : ℝ) (hε : 0 < ε) :
      ∃ S : Set ℝᵈ, MeasurableSet S ∧ volume S < ENNReal.ofReal ε ∧
        ¬ Complete (Λ δ) S 2 (by norm_num) := by
\end{leancode}
Here we fix a dimension $d$, a sequence $\delta$ that decays to $0$, and a constant $\varepsilon>0$. Then, the statement claims that we can obtain a measurable set $S$ with  measure $|S|<\varepsilon$ for which $\{ n+\delta_n : n \in \Z^d \}$ is not complete in $L^2(S)$.

\subsection{Integer frequencies}
\label{subsec:lean-integer}
For the result on integer frequencies, we have that $\Lambda_v$ is a subset of $\Z^d$, selected according to whether the fractional part of $n\cdot\alpha$ belongs to $[1,v,1)$. The half-open interval $[1-v,1)$ is written in Lean by \lean{Set.Ico (1 - v) 1} and the set $\Lambda_v$ is then defined as follows.
\begin{leancode}
    def Λ (α : ℝᵈ) (v : ℝ) : Set ℤᵈ :=
      {n | Int.fract (∑ i, (n i : ℝ) * α i) ∈ Set.Ico (1 - v) 1}
\end{leancode} 
The theorem in question concerns measurable subsets $S\subset [0,1]^d$ of the
closed unit cube, with measure $|S|<v$. The cube is defined separately for readability.
\begin{leancode}
def unitCube : Set ℝᵈ := {x | ∀ i, x i ∈ Set.Icc (0 : ℝ) 1}
\end{leancode}
We also define the appropriate notions of universal uniqueness and completeness, \lean{L1UniquenessOnUnitCube} and \lean{UniversalCompletenessOnUnitCube}: these are just as \lean{L1Uniqueness} and \lean{UniversalCompleteness}, except that we have the additional restriction $S \subset [0,1]^d$ for the sets considered.
The theorem claims that $\Lambda_v$ has density $v$, and that the corresponding universal completeness claim holds. In Lean the theorem is stated as follows.
\begin{leancode}
    theorem integer_frequencies
      (hd : 0 < d) (α : ℝᵈ) (hα : Independent α)
      (v : ℝ) (hv : v ∈ Set.Icc (0 : ℝ) 1) :
      UniformDensity (Λ α v) v ∧ L1UniquenessOnUnitCube (Λ α v) v ∧
      UniversallyCompleteOnUnitCube (Λ α v) v := by
\end{leancode}
In the above theorem, we fix $d> 0$, $\alpha\in \mathbb R^d$ such that $\alpha$ is \lean{Independent} and $v\in [0,1]$. The theorem then states the density claim of the theorem together with the uniqueness claim for $L^1(S)$ for every $S\subset [0,1]^d$ with $|S|<v$.  

\subsection{Sobolev nonuniqueness}
\label{subsec:lean-sobolev}

To formalize the corresponding Sobolev result, we first define $\R^d$ via the following Lean notation.
\begin{leancode}
    local notation "ℝᵈ" => EuclideanSpace ℝ (Fin d)
\end{leancode}
The periodization of a set $A$, namely $A+\Z$, is given by the following definition.
\begin{leancode}
    def periodicSpectrum (A : Set ℝᵈ) : Set ℝᵈ :=
      {ξ | ∃ k : ℤᵈ, ξ - WithLp.toLp 2 (fun i => (k i : ℝ)) ∈ A}
\end{leancode}
A comment on \lean{WithLp.toLp 2}: this allows us to reinterpret $k\; i$ as an element of the correct type (a function versus an element of the Euclidean space), so that the sum $\xi + k$ makes sense as an element of the Euclidean space $\R^d$.

The weight
$w:=1+|t|^{2\alpha}$ is defined next as $2$ for $\alpha =0$ and as $1+|t|^{2\alpha}$ otherwise. The Lean definition reads as follow.
\begin{leancode}
    def w (α : ℝ) (t : ℝᵈ) : ℝ :=
      1 + if α = 0 then 1 else Real.rpow ‖t‖ (2 * α)
\end{leancode}
We next define the set \lean{PWalpha} which consists of functions $f$ that have a Fourier
representative $F$ supported almost everywhere in $S$ and such that $F^2w$ is integrable. Moreover, we define the classical Paley-Wiener space $PW_S$ by requiring that $\alpha=0$.
\begin{leancode}
    def PWalpha (α : ℝ) (S : Set ℝᵈ) : Set (ℝᵈ → ℂ) :=
      {f | ∃ F : ℝᵈ → ℂ,
        (∀ᵐ t ∂volume, t ∉ S → F t = 0) ∧
        MemLp F 2 ((volume.restrict S).withDensity (fun t => ENNReal.ofReal (w α t))) ∧
        ∃ (hF : MemLp F 2 volume) (hf : MemLp f 2 volume),
          (��⁻ (hF.toLp F) : Lp ℂ 2 volume) = hf.toLp f}
    def PW (S : Set ℝᵈ) : Set (ℝᵈ → ℂ) := PWalpha 0 S
\end{leancode}
We notice that $\mathcal F^{-1}$ is Mathlib's notation for the inverse Fourier transform from $L^2$ to itself, the final equality in the definition of \lean{PWalpha} must be an equality between two elements of $L^2$. However,  $F,f$ are a priori not elements of this space, and therefore we ask for the existence of a proof, respectively \lean{hF} and \lean{hf}, that $F,f$ are in $L^2$. Then, $\mathcal F^{-1}$ is applied to $F$ seen as a member of $L^2$, where this is ensured by \lean{hF}. The same holds for $f$.  

We now state the theorem. For the range
$0\leq\alpha\leq d/2$, given $\Lambda$ uniformly discrete, it asserts the existence of one continuous $f$
whose value is $1$ at any prescribed $x_0\notin\Lambda$ and which vanishes at every point of $\Lambda$.
\begin{leancode}
    theorem sobolev_nonuniqueness
      (hd : 0 < d) (A : Set ℝᵈ)
      (hA : MeasurableSet A) (hAcube : A ⊆ unitCube)
      (hApos : 0 < volume A) (hAsmall : volume A < 1)
      (α : ℝ) (hα : 0 ≤ α ∧ α ≤ (d : ℝ) / 2)
      (Λ : Set ℝᵈ) (hΛ : UniformlyDiscrete Λ)
      (x₀ : ℝᵈ) (hx₀ : x₀ ∉ Λ) :
        ∃ f ∈ PWalpha α (periodicSpectrum A),
        Continuous f ∧ f x₀ = 1 ∧ ∀ ξ ∈ Λ, f ξ = 0 := by
\end{leancode}
We recall that we do not prove existence of a uniformly discrete uniqueness
set for $\alpha>d/2$, as this was already done in \cite{OlevskiiUlanovskii2017}.

\Needspace{23\baselineskip}
\subsection{Paley--Wiener nonuniqueness}
\label{subsec:lean-endpoint}

Setting $\alpha=0$ in the preceding theorem gives a nonzero continuous function in
$PW_{A+\Z^d}$ vanishing on a given uniformly discrete set. The formalization of the endpoint non-uniqueness theorem posed by Olevskii and Ulanovskii reads as follows.
\begin{leancode}
    theorem endpoint_nonuniqueness
      (hd : 0 < d) (A : Set ℝᵈ)
      (hA : MeasurableSet A) (hAcube : A ⊆ unitCube)
      (hApos : 0 < volume A) (hAsmall : volume A < 1) :
      ∀ Λ : Set ℝᵈ, UniformlyDiscrete Λ →
        ∃ f ∈ PW (periodicSpectrum A),
        Continuous f ∧ f ≠ 0 ∧ ∀ ξ ∈ Λ, f ξ = 0 := by
\end{leancode}
The theorem states precisely that given $A\subset[0,1]^d$ measurable, with $0<|A|<1$, and a uniformly discrete set $\Lambda$, there exists a nonzero, continuous $f \in PW_{A+\Z^d}$ which vanishes on $\Lambda$. This is exactly the nonuniqueness conclusion of item (5) in Section~\ref{sec:higher-dim}.

All Lean statements compile under Lean 4.31.0. Their five public theorems depend only on Lean's standard foundational axioms: \lean{propext},
\lean{Classical.choice}, and \lean{Quot.sound}. The
showcase validation record
documents the current statement and axiom checks. Independent proof checks
and their different scopes are recorded separately for (1), (2), (3) and (4). 

\section*{Acknowledgments}
S.~B. and E.~F. were supported by the European Research Council
under the Grant Agreement No~948029.

L.L.~is grateful to the Azrieli Foundation for the award of an Azrieli Fellowship and acknowledges the support of this research by ISF Grant No.~854/25.

\bibliographystyle{plain}
\bibliography{bibfile}

\end{document}